\numberwithin{equation}{section}
\newtheorem{theorem}{Theorem}[section]
\newtheorem{lemma}{Lemma}[section]
\newtheorem{proposition}{Proposition}[section]
\newtheorem{definition}{Definition}[section]
\newtheorem{example}{Example}[section]
\newtheorem{remark}{Remark}[section]
\renewcommand{\fnum@algorithm}{}
\newcommand{\vi}[2]{\ensuremath{\textup{VI}(#1,#2)}}
\newcommand{\inner}[2]{\left\langle#1,#2\right\rangle}
\newcommand{\norm}[1]{\|#1\|}
\def\O{\Omega}
\def\o{\omega}
\def\dbE{\mathds{E}}
\def\mrn{R^n}
\def\mn{\mathbb{N}}
\renewcommand{\@seccntformat}[1]{\csname the#1\endcsname.\hspace{0.5em}}
\title{A prediction-correction ADMM  for
multistage \\ stochastic variational inequalities\footnote{This work was supported by the NSF of China under grants 12071324 and 11931011.}}
\author{Ze You\thanks{School of Mathematical  Sciences, Sichuan Normal University, Chengdu  610066, China. Email: \texttt{ZeYou9205@163.com}.
 }\;\; \;\;
Haisen Zhang\thanks{Corresponding author. School of Mathematical Sciences, Sichuan Normal University, Chengdu 610066,  China. Email: \texttt{haisenzhang@yeah.net}.}\;\; \;\;
}
\date{}
\begin{document}
\maketitle

\begin{spacing}{1.0}

\begin{abstract}
 The multistage stochastic variational inequality is reformulated into a variational inequality with separable structure through introducing a new variable. The prediction-correction ADMM  which was originally proposed in [B.-S. He, L.-Z. Liao and M.-J. Qian, J. Comput. Math., 24 (2006), 693--710]  for solving deterministic variational inequalities in finite dimensional spaces  is adapted to solve the multistage stochastic variational inequality. Weak convergence of the  sequence generated by that algorithm is proved under the conditions of monotonicity  and Lipschitz continuity. When the sample space is a finite set, the corresponding multistage stochastic variational inequality is actually defined on a finite dimensional Hilbert space and the strong convergence of the algorithm naturally holds true. Some numerical examples are given to show the efficiency of the algorithm.
\end{abstract}

\vspace{+0.3em}

\noindent {\bf Key words:}
Multistage stochastic variational inequality; monotonicity; alternating direction method of multiplier; nonanticipativity, weak convergence.

\vspace{+0.3em}

\noindent {\bf AMS subject classifications:} 65K10, 65K15, 90C25, 90C15.

\section{Introduction}

Variational inequalities (VIs), as the first order necessary conditions of convex programming problems, equilibrium problems and optimal control problems, provide us with a powerful tool to solve various types of application problems. The deterministic variational inequalities have been extensively studied from the aspects of both the theories and algorithms in the past decades, for more details, see \cite{FiniDim03I} and \cite{FiniDim03II}.

In many practical problems, especially  in  finance, economy and management, the decision makers have to face the uncertainty brought by some stochastic factors. When the impact of the stochastic factors cannot be ignored, the deterministic variational inequalities may not be the suitable/precise models for those problems. In order to model the problems with uncertainty, in recent years, various types of stochastic variational inequalities (SVIs) are proposed.

The first class of stochastic variational inequalities which has attracted the attention of scholars is the one-stage stochastic variational inequalities. Based on whether the stochastic information is known or not before the decision-making, three kinds of one-stage stochastic variational inequalities are proposed, i.e., the Wait-and-See model, the Expected-Value model \cite{Robinson1999} and the Expected-Residual-Minimization model \cite{Chen Fukushima 2005}.

In a Wait-and-See model, the stochastic information is assumed to be known when one makes a decision, and the solution to such stochastic variational inequality is a response function of scenarios or a function of a certain random variable. Solving the Wait-and-See stochastic variational inequality is equivalent to solving individually a collection of variational inequalities with stochastic parameters. The Expected-Value stochastic variational inequality is a deterministic variational inequality with the map represented by the expected value of some map with stochastic parameters. In the Expected-Value model, the decision should be made before the stochastic information is observed and hence the decision set is independent of the stochastic factors. The basic idea of Expected-Residual-Minimization model comes from finding a common (deterministic) solution to a collection of variational inequalities with stochastic parameters. In general, such common solution might not exist. The Expected-Residual-Minimization model focuses on finding a (deterministic) solution through minimizing the expectation of a residual function for  parameterized variational inequalities. Particularly, when the optimal value of that minimization problem is zero, any minimizer is almost surely a common solution of the corresponding parameterized  variational inequalities. Both the Expected-Value model and the Expected-Residual-Minimization model are called the Here-and-Now model.

The one-stage model does not take into account the increasing levels of observed information in the process of decision-making. In practice, there exist a large number of problems in which the decisions can be made step by step. Clearly, in such situation, the observed information until the current step may affect the decision of next step. In other words, the decision in each step should be a response to the historical observation data. In order to model the dynamical decision problems that the decision makers can use the historical observation data at each step when he/she makes a decision, the two-stage and multistage stochastic variational inequalities are proposed.

Under the assumption that the uncertainty is described by a random vector and the random vector can be observed completely in the second stage, the two-stage stochastic variational inequality is to find a pair of solutions to a coupled stochastic variational inequality system: A Here-and-Now solution to an Expected-Value model in stage one and a solution to the Wait-and-See model in stage two. We refer the readers to \cite{Chen pong wets2017} and \cite{sun chen JORSC2021} for the definitions and examples of the two-stage stochastic variational inequalities.
In \cite{Rockafellar Wets17}, the authors first introduced the notion of multistage stochastic variational inequality with nonanticipativity constraints. It is showed that, both the one-stage Expected-Value type stochastic variational inequality and the two-stage stochastic variational inequality are contained in the framework of the multistage model of \cite{Rockafellar Wets17}.

There are two main approaches to solve the two-stage and multistage stochastic variational inequalities. The first one is the sample average approximation (SAA) method. The SAA method was first proposed for solving stochastic programming problems (see, e.g., \cite{Shapiro handbook SP 2009}), one-stage stochastic variational inequalities of Expected-Value model (see, e.g., \cite{Robinson1999}) and the one-stage stochastic variational inequalities of Expected-Residual-Minimization model (see, e.g., \cite{Chen Fukushima 2005,ChenWetsZhang SAA 2012}). Then it was extended to solve the two-stage stochastic variational inequalities and two-stage stochastic generalized equations, see \cite{Chen pong wets2017,Chen sha sun 2019siopt}.  The second one is the progressive hedging algorithm (PHA). The PHA was first proposed in \cite{Rockafellar wets 1991}  for solving the stochastic programming problem with nonanticipativity constraint and then adapted by Rockafellar and Sun in \cite{Rockafellar Sun 2019} to solve the two-stage and multistage stochastic variational inequalities in the discrete cases.  Recently,  the PHA was developed to solve various types of two-stage and multistage stochastic variational inequalities and stochastic games, see, for instance, \cite{RockafellarSun 2020,SunYangYaoZhang 2020,ZhangSunXu game pha 2019}. When the probability space under consideration is not discrete, \cite{Chen sun xu 2019} proposed a discrete approximation method for two-stage stochastic linear complementarity problems.

In this paper, we shall adapt the definition of the multistage stochastic variational inequalities given in \cite{Rockafellar Wets17}. A slight difference is that, rather than  the discrete cases, we shall give the definition of multistage stochastic variational inequalities in a general probability space. One main motivation of extending the concept of multistage stochastic variational inequalities into the general probability space is for its particular use in stochastic optimal control problems. We refer the readers to  Example \ref{ex SOCP} for more details. We shall see that the multistage stochastic variational inequality is indeed a variational inequality defined on the Hilbert space of square integrable random vectors. Theoretically, it can be solved by the projection-type algorithms for deterministic variational inequalities. The main difference between the deterministic variational inequality and the multistage stochastic variational inequality is that, in the stochastic case, the calculation of the projection onto a subset of random vectors is much more complicated. Especially, to find the projection of a random vector onto the nonanticipativity subspace one needs to compute a collection of conditional expectations, which is quite different from the usual metric projection onto a nonempty closed convex set (see Remark \ref{Remark3.1} for more details).

One of the key idea of PHA is treating  the metric projection onto a nonempty closed convex subset and the projection onto the nonanticipativity subspace separately. It was showed in \cite{sun xu zhang 2020} and \cite{mu yang 2020} that, the PHA is equivalent to the  alternating direction method of multipliers (ADMM). The ADMM, as an extension of inexact augmented Lagrangian method  (ALM),  was first proposed by Glowinski and Marrocco in \cite{Glowinski75}. In the past few decades, the ADMM and its various extensions have been deeply studied by many scholars for solving mathematical programming problems and variational inequality problems under the deterministic framework. For related work, we refer  the reader to the review articles \cite{Boyd2011} and \cite{Hebingsheng2018}.

Note that the PHA (or the ADMM) for multistage stochastic variational inequalities is an implicit iterative algorithm. In each iteration one needs to solve a collection of variational inequalities with stochastic parameters. In this paper,  we shall adapt the prediction-correction ADMM, which originally proposed in \cite{HeJCM2006}  for solving deterministic finite dimensional  variational inequalities with separable structures, to solve the multistage stochastic variational inequalities. Different from the PHA, the prediction-correction ADMM  is an explicit iterative algorithm, in which  the calculation of each iteration becomes relatively easier than that of PHA. The weak convergence of the prediction-correction ADMM for multistage stochastic variational inequalities is proved in the general probability space under the conditions of monotonicity and Lipschitz continuity. When the sample space is a finite set, as discussed in \cite{Rockafellar Wets17}, the  multistage stochastic variational inequalities is actually  defined on a finite dimensional Hilbert space and the strong convergence of the algorithm  naturally  holds true. Two numerical examples are given in such case to show the efficiency of the algorithm.

The rest of this paper is organized as follows. Some basic notions and results in probability, set-valued and variational analysis  and multistage stochastic variational inequalities are given in Section 2. In Section 3, the prediction-correction ADMM for multistage stochastic variational inequalities and its weak convergence are studied.  The discrete cases and numerical examples are discussed  in Section 4. Some concluding remarks are given in Section 5. A proof of a technical result is given in the Appendix.

\section{Preliminaries}

In this section, we recall some basic notions and preliminary results in probability and set-valued and variational analysis. Then we give the definition of the multistage stochastic variational inequalities in the general probability space.

\subsection{Concepts and results in probability}

First, we recall some basic concepts and results in probability. We refer the readers to  \cite{A N Shiryaev Probability 96} for more details.

Let $(\Omega,\mathscr{F},P)$ be a complete probability space. Here $\Omega$ is the sample space, $\mathscr{F}$ is a $\sigma$-field defined on $\Omega$,  and $P$ is a  probability measure defined on $(\Omega,\mathscr{F})$. Any element of $\Omega$, denoted by $\omega$, is called a sample point.  As usual, when the context is clear, we omit the $\omega$ ($\in \Omega$) argument in the defined maps/functions. Denote by $\mathscr{O}$ the collection of all $P$-null sets. We say that a property holds almost surely (a.s.) if there
is a set $A\in \mathscr{O}$ such that the property holds for every $\o\in \O\setminus A$.  Denote by $\mathscr{F}_{0}$ the trivial $\sigma$-field which only contains $\emptyset$ and  $\O$.  Let $\mathscr{A}$  be a family of subsets of $\Omega$. The smallest $\sigma$-field containing $\mathscr{A}$, denoted by $\sigma(\mathscr{A})$, is called the  $\sigma$-field generated by $\mathscr{A}$. For a metric space $Y$, denote by $\mathscr{B}(Y)$ the Borel  $\sigma$-field of $Y$. Let $R^n$ ($n\in \mathbb{N}$) be an $n$-dimensional Euclidean space with Borel $\sigma$-field $\mathscr{B}(R^n)$.  A map $\xi: \Omega\rightarrow R^n$ is called an  $\mathscr{F}$-measurable random vector if
\begin{equation*}
  \xi^{-1}(A):=\big\{\omega\in\Omega\ \big|\ \xi(\omega)\in A\}\in\mathscr{F},\quad \forall\ A\in\mathscr{B}(R^n).
\end{equation*}
Denote by $\mathcal{L}^0(\Omega, \mathscr{F}, R^n)$ the set of all $\mathscr{F}$-measurable random vectors.  When a random vector $\xi\in \mathcal{L}^0(\Omega, \mathscr{F}, R^n)$  is  integrable with respect to the probability measure $P$, we call the integral
$\dbE\ \xi =\int_{\Omega}\xi(\o)P(d\o) $
the expectation of $\xi$.  Let $\inner{ \cdot}{\cdot}$ and $\mid\cdot \mid$ be respectively the inner product and norm in $R^n$. Denote by  $\mathcal{L}^2(\Omega,\mathscr{F}, R^n)$ the Hilbert  space of all  the square-integrable random vectors taking values in $\mrn$, i.e.,
\begin{equation}\label{L2definition}
\mathcal{L}^2(\Omega,\mathscr{F}, R^n)=\Big\{\xi\in \mathcal{L}^0(\Omega, \mathscr{F}, R^n) \ \big|\ \dbE| \xi|^2 <+\infty\Big\}.
\end{equation}
For any $\xi,\eta\in \mathcal{L}^2(\Omega,\mathscr{F}, R^n)$, the norm of $\xi$ is defined by
$$\|\xi\|_{\mathcal{L}^2}:=\Big[\dbE| \xi|^2\Big]^{\frac{1}{2}}=\Bigg[\int_{\O}| \xi(\o)|^2P(d\o)\Bigg]^{\frac{1}{2}},$$
and  the  inner product of $\xi$ and $\eta$ is defined by
\begin{equation*}
\inner{\xi}{\eta}_{\mathcal{L}^2}:=\dbE \inner{\xi}{\eta} =\int_{\O}\inner{\xi(\o)}{\eta(\o)} P(d\o).
\end{equation*}
Let $\{x^k\}_{k=1}^{\infty}$ be a sequence in $\mathcal{L}^2(\Omega,\mathscr{F}, R^n)$. We write $x^k\rightharpoonup  x$ to indicate that the sequence $\{x^k\}_{k=1}^{\infty}$ converges  weakly to $x$   and $x^k \rightarrow x$  to indicate that the sequence $\{x^k\}_{k=1}^{\infty}$ converges  strongly   to $x$.

\begin{definition}\cite[Definition 1, P. 213]{A N Shiryaev Probability 96}
Let $\xi\in\mathcal{L}^0(\Omega, \mathscr{F}, R^n)$, $\dbE\; |\xi|<+\infty$ and $\mathscr{G}$ be a sub-$\sigma$-field of $\mathscr{F}$. The conditional expectation of $\xi$ with respect to the $\sigma$-field $\mathscr{G}$, denoted by $\dbE\big[\xi|\mathscr{G}\big]$, is a random vector such that
\begin{enumerate}[{\rm i)}]
  \item $\dbE\big[\xi|\mathscr{G}\big]:\O\to \mrn$ is $\mathscr{G}$-measurable;
  \item   $\int_{A}\xi(\o)~P(d\o)=\int_{A}\dbE\big[\xi|\mathscr{G}\big](\o)~P(d\o),\ \forall\ A\in\mathscr{G}. $
\end{enumerate}
\end{definition}

The conditional expectation has the following basic properties.

\begin{lemma}\cite[P.215]{A N Shiryaev Probability 96}\label{properties for CE}
Suppose that $\xi,\eta\in \mathcal{L}^0(\Omega, \mathscr{F}, R^n)$, $\dbE |\xi|<+\infty$, $\dbE |\eta|<+\infty$, $\dbE \ \big| \inner{\xi}{\eta} \big| <+\infty$,  $\mathscr{F}_0=\{\emptyset, \O\}$ and $\mathscr{G}$ is a sub-$\sigma$-field of $\mathscr{F}$. Then, the following asserts hold true:
\begin{enumerate}[{\rm i)}]
  \item If $v$ is a constant vector and $\xi=v$ a.s., then $\dbE\big[\xi|\mathscr{G}\big]=v$ \mbox{a.s.};
  \item If $\xi$ is $\mathscr{G}$-measurable, then $\dbE\big[\xi|\mathscr{G}\big]=\xi$ a.s.;
  \item $\dbE\big[\xi|\mathscr{F}_{0}\big]=\dbE ~\xi$ a.s.;
  \item $\dbE\big\{\dbE\big[\xi|\mathscr{G}\big]\big\}=\dbE~ \xi$;
  \item  If $\xi$ is $\mathscr{G}$-measurable, then $
\dbE\big[\inner{\xi}{\eta}|\mathscr{G}\big]=\inner{\xi}{ \dbE\big[\eta|\mathscr{G}\big]}$ a.s.
\end{enumerate}
\end{lemma}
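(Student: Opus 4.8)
The plan is to verify each assertion by checking that the proposed expression satisfies the two defining properties of the conditional expectation given in the preceding definition, namely $\mathscr{G}$-measurability together with the integral-matching identity $\int_A \xi\,P(d\o)=\int_A \dbE\big[\xi|\mathscr{G}\big]\,P(d\o)$ for every $A\in\mathscr{G}$, and then invoking the almost-sure uniqueness of a random vector satisfying these two properties. Thus for each item I would exhibit a concrete candidate, confirm it is $\mathscr{G}$-measurable, confirm the integral identity, and conclude equality a.s.\ by uniqueness.

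Items i)--iv) would be argued directly. In i), a constant vector $v$ is trivially $\mathscr{G}$-measurable, and when $\xi=v$ a.s.\ one has $\int_A v\,P(d\o)=vP(A)=\int_A \xi\,P(d\o)$ for all $A\in\mathscr{G}$, so $v$ is a version of $\dbE\big[\xi|\mathscr{G}\big]$. Item ii) is immediate, since a $\mathscr{G}$-measurable $\xi$ satisfies both defining properties with itself. Item iii) is the special case $\mathscr{G}=\mathscr{F}_0$: any $\mathscr{F}_0$-measurable map is a.s.\ constant, and the constant $\dbE\,\xi$ matches $\int_{\O}\xi\,P(d\o)$ on $A=\O$ and $0$ on $A=\emptyset$, the only two members of $\mathscr{F}_0$. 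Item iv) follows by taking $A=\O$ in the integral-matching identity, which yields $\dbE\,\xi=\int_{\O}\dbE\big[\xi|\mathscr{G}\big]\,P(d\o)=\dbE\big\{\dbE\big[\xi|\mathscr{G}\big]\big\}$.

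The substantive part is v), the ``taking out what is $\mathscr{G}$-measurable'' property, and I expect this to be the main obstacle. Reducing to the scalar case via $\inner{\xi}{\eta}=\sum_{i=1}^{n}\xi_i\eta_i$ and linearity, I would build the identity by the standard layered approximation: first for an indicator $\xi=\mathds{1}_B$ with $B\in\mathscr{G}$, where for any $A\in\mathscr{G}$ one has $A\cap B\in\mathscr{G}$ and hence $\int_A \mathds{1}_B\,\dbE\big[\eta|\mathscr{G}\big]\,P(d\o)=\int_{A\cap B}\dbE\big[\eta|\mathscr{G}\big]\,P(d\o)=\int_{A\cap B}\eta\,P(d\o)=\int_A \mathds{1}_B\,\eta\,P(d\o)$, so that $\mathds{1}_B\,\dbE\big[\eta|\mathscr{G}\big]$ is a version of $\dbE\big[\mathds{1}_B\,\eta|\mathscr{G}\big]$; then by linearity for $\mathscr{G}$-measurable simple functions; then for a general nonnegative $\mathscr{G}$-measurable factor by monotone approximation through simple functions combined with the conditional monotone convergence theorem; and finally for a general factor by splitting into positive and negative parts. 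The integrability hypotheses $\dbE\,|\xi|<+\infty$, $\dbE\,|\eta|<+\infty$ and $\dbE\,\big|\inner{\xi}{\eta}\big|<+\infty$ guarantee that each conditional expectation is well defined and that the limiting steps are legitimate. Since these are classical facts, in the write-up I would most likely cite \cite{A N Shiryaev Probability 96} rather than reproduce the full approximation argument.
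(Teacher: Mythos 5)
Your proposal is correct: each item is verified exactly as it should be, by checking the two defining properties of conditional expectation and invoking a.s.\ uniqueness, with the standard indicator--simple--monotone--signed ladder for item v). The paper offers no proof of its own here --- it simply cites Shiryaev (p.~215) --- and your argument is precisely the textbook verification that citation points to, so there is nothing to reconcile.
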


\begin{remark}\label{Rem projection CE}
Let $\mathscr{G}$ be a sub-$\sigma$-field included in $\mathscr{F}$. Define
$$ \mathcal{L}^2(\Omega,\mathscr{G}, R^n)=\Big\{\xi\in \mathcal{L}^2(\Omega,\mathscr{F}, R^n) \ \big|\ \xi \mbox{ is } \mathscr{G}-\mbox{measurable} \Big\},$$
and let $\xi\in \mathcal{L}^2(\Omega,\mathscr{F}, R^n)$.
Clearly, $ \mathcal{L}^2(\Omega,\mathscr{G}, R^n)$ is a closed linear subspace of  $\mathcal{L}^2(\Omega,\mathscr{F}, R^n)$ and, by Lemma \ref{properties for CE},
$$\dbE\inner{\xi-\dbE\big[\xi|\mathscr{G}\big]}{v}
=\dbE\inner{\xi}{v}-\dbE\big\{\dbE\big[\inner{\xi}{v}|\mathscr{G}\big]\big\}
=0,\quad \forall\;v \in \mathcal{L}^2(\Omega,\mathscr{G}, R^n).
$$
Then, for any $\eta\in \mathcal{L}^2(\Omega,\mathscr{G}, R^n)$, by  $\eta-\dbE [\xi|\mathscr{G}]\in  \mathcal{L}^2(\Omega,\mathscr{G}, R^n)$ we have
\begin{eqnarray*}
&&\dbE |\xi-\dbE[\xi|\mathscr{G}]|^2\\
&=&\dbE |\xi-\eta|^2+\dbE |\eta-\dbE [\xi|\mathscr{G}]|^2
+2 \dbE \inner{\xi-\eta}{\eta-\dbE[\xi|\mathscr{G}]}\\
&=&\dbE |\xi-\eta|^2-\dbE |\eta-\dbE [\xi|\mathscr{G}]|^2
+2 \dbE \inner{\xi-\dbE [\xi|\mathscr{G}]}{\eta-\dbE [\xi|\mathscr{G}]}\\
&=&\dbE |\xi-\eta|^2-\dbE |\eta-\dbE [\xi|\mathscr{G}]|^2\\
&\le&\dbE |\xi-\eta|^2.
\end{eqnarray*}
Therefore, $\dbE~[\xi|\mathscr{G}]$ is the metric projection  of $\xi$ onto the subspace $\mathcal{L}^2(\Omega,\mathscr{G}, R^n)$ in the sense of the $\mathcal{L}^2$ norm.
\end{remark}

\begin{example}\cite[P.78]{A N Shiryaev Probability 96}\label{example for CE}
Assume that the $\sigma$-field $\mathscr{G}$ is generated by disjoint  subsets $A_1, A_2,...,A_m$ ($m\in \mn$) with $A_i\in  \mathscr{F}$, $P(A_i)>0$, $i=1,2,...,m$ and $\O=\cup_{i=1}^{m}A_i$.  Then, for any $\xi\in \mathcal{L}^2(\Omega,\mathscr{F}, R^n)$,
$$\dbE~[\xi|\mathscr{G}]=\sum_{i=1}^{m}\frac{\dbE~ [\xi\chi_{A_i}]}{P(A_i)}\chi_{A_i},$$
where $\chi_{A_i}$ is the characteristic function of $A_i$, i.e.,
$$\chi_{A_i}(\o)=\left\{
\begin{array}{l}
1, \qquad \o\in A_i,\\
0, \qquad  \o\notin A_i,\\
\end{array}\right.\quad \forall\ i=1,2,...,m.$$
Let $B_1, B_2,...,B_\ell$ ($\ell\in \mn$) be disjoint  subsets, $u_{1},u_{2},...,u_{\ell}\in R^n$ and define $\xi=\sum_{j=1}^{\ell} u_{j} \chi_{B_j}$.  Then we have
$$\dbE~[\xi|\mathscr{G}]=\sum_{i=1}^{m}\sum_{j=1}^{\ell}u_{j} P(B_j| A_{i})\chi_{A_i},$$
where $P(B_j| A_{i})=P(A_{i}\cap B_j)/P(A_i)$ is the probability of $B_j$ under the condition that $A_i$ has occurred.
\end{example}

\subsection{Set-valued and variational analysis}

In this subsection, we introduce some elemental results in set-valued  and variational analysis. We refer the readers to \cite{Aubin90} for more details.

Let $(X, \mathscr{S}, \mu)$ be a complete $\sigma$-finite measure space, $Y$ be a complete separable metric space. A set valued map $\Phi:X\rightsquigarrow Y$ is characterized by its graph $Gph(\Phi)$, a subset of the product space $X\times Y$ defined by
$$Gph(\Phi):=\Big\{(x,y)\in X\times Y\ \Big|\ y\in \Phi(x)\Big\}.$$
For any $x\in X$, the $\Phi(x)$, which is a subset of $Y$,  is called the value of $\Phi$ at $x$. The
domain of $\Phi$ is the subset of elements $x\in X$ such that $\Phi(x)$ is
nonempty, i.e., $Dom(\Phi):=\{x\in X\ |\ \Phi(x)\neq \emptyset\}.$
The image of $\Phi$ is defined by $Im(\Phi):=\cup_{x\in X}\Phi(x).$
Suppose that $\Phi:X\rightsquigarrow Y$ is a set-valued map with closed values. $\Phi$ is called $\mathscr{S}$-measurable if
$$\Phi^{-1}(A):=\{x\in X\ |\ \Phi(x)\cap A\neq \emptyset\}\in \mathscr{S},\quad \forall \ A\in \mathscr{B}(Y).$$
A single-valued map $\varphi:X\to Y$ is called a measurable selection of $\Phi$ if $\varphi$ is $\mathscr{S}$-measurable and $\varphi(x)\in \Phi(x)$ for
$\mu$-a.e. $x\in X$.
\begin{lemma}\cite[Theorem 8.1.3, page 308]{Aubin90} \label{measurable-selec}
Let $(X, \mathscr{S}, \mu)$ be a complete $\sigma$-finite measurable space, $Y$ a complete separable metric space, $\Phi:X\rightsquigarrow Y$ a measurable set-valued map with nonempty closed values. Then there exists a measurable selection of $\Phi$.
\end{lemma}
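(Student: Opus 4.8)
The plan is to recognize this as the Kuratowski--Ryll-Nardzewski measurable selection theorem and to prove it by the classical method of successive approximation: I would build a sequence of countably valued measurable maps $f_k:X\to Y$ converging pointwise to a selection of $\Phi$. Before starting I would isolate the only feature of the hypotheses the argument actually consumes. Since $Y$ is separable, fix a countable dense set $\{y_i\}_{i\in\mathbb{N}}\subseteq Y$, and write $B(y,r)=\{z\in Y:d(z,y)<r\}$ for the open ball, where $d$ denotes the metric of $Y$. Because every open ball is a Borel set, the measurability of $\Phi$ (in the sense defined above) yields $\Phi^{-1}(B(y_i,r))=\{x:\Phi(x)\cap B(y_i,r)\neq\emptyset\}\in\mathscr{S}$ for all $i$ and all $r>0$; this weak measurability, together with the density of $\{y_i\}$ and the completeness of $Y$, is all I will use.

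I would then carry out the induction so that each $f_k$ takes values in $\{y_i\}$ and satisfies $d(f_k(x),\Phi(x))<2^{-k}$ together with the Cauchy-type bound $d(f_k(x),f_{k-1}(x))<2^{-(k-1)}+2^{-k}$ for every $x\in X$. For the base step, the sets $\Phi^{-1}(B(y_i,1))$ lie in $\mathscr{S}$ and cover $X$, since each value $\Phi(x)$ is nonempty and hence meets some unit ball centred at a dense point; disjointifying these sets and setting $f_0\equiv y_i$ on the $i$-th piece produces a measurable $f_0$ with $d(f_0(x),\Phi(x))<1$. For the inductive step, given $f_{k-1}$ I would put $D_i=\Phi^{-1}(B(y_i,2^{-k}))\cap f_{k-1}^{-1}(B(y_i,2^{-(k-1)}+2^{-k}))$; the first factor is measurable by weak measurability and the second because $f_{k-1}$ is measurable, so $D_i\in\mathscr{S}$. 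The family $\{D_i\}$ covers $X$: from $d(f_{k-1}(x),\Phi(x))<2^{-(k-1)}$ choose $z\in\Phi(x)$ within $2^{-(k-1)}$ of $f_{k-1}(x)$ and a dense point $y_i$ within $2^{-k}$ of $z$, whence $x\in D_i$. Disjointifying the $D_i$ and assigning $f_k\equiv y_i$ on the $i$-th piece gives the required $f_k$.

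Finally, the Cauchy bound makes $\{f_k(x)\}$ a Cauchy sequence for each fixed $x$, so by completeness of $Y$ it converges to some $f(x)$; this $f$ is measurable as a pointwise limit of the measurable maps $f_k$, and $f(x)\in\Phi(x)$ for every $x$ because $d(f_k(x),\Phi(x))<2^{-k}\to0$ while $\Phi(x)$ is closed. Hence $f$ is a measurable selection. I expect the main obstacle to be keeping measurability alive through the recursion, namely checking at each stage that the sets $D_i$ belong to $\mathscr{S}$ and that they cover $X$; this is precisely where weak measurability $\Phi^{-1}(B(y_i,r))\in\mathscr{S}$ and the density of $\{y_i\}$ are indispensable. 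By contrast, the completeness and $\sigma$-finiteness of $\mu$ play no role in this construction; they enter only if one wishes to reconcile the various notions of measurability of $\Phi$ or to obtain selections defined merely up to $\mu$-null sets.
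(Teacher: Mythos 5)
Your proof is correct: it is the standard Kuratowski--Ryll-Nardzewski successive-approximation argument, and since the paper states this lemma only as a citation of \cite[Theorem 8.1.3]{Aubin90} without reproducing a proof, your argument coincides in substance with the proof given in that reference. The induction invariants $d(f_k(x),\Phi(x))<2^{-k}$ and $d(f_k(x),f_{k-1}(x))<2^{-(k-1)}+2^{-k}$ are maintained correctly, and your closing remark that completeness of $\mu$ and $\sigma$-finiteness are not actually used (only weak measurability of $\Phi$, separability and completeness of $Y$, and closedness of the values) is accurate.
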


Next, we introduce the concepts of monotonicity for a set-valued map.  Let $\mathcal{H}$ be a Hilbert space with norm $\|\cdot\|_{\mathcal{H}}$ and inner product $\inner{\cdot}{\cdot}_{\mathcal{H}}$.

\begin{definition}\label{monotonedefinition}
\cite[Definition 3.5.1,3.5.4]{Aubin90}
A set-valued map $\Phi: \mathcal{H}\rightsquigarrow \mathcal{H}$ is monotone if its graph is monotone in the sense that
$$\langle u-v, x-y\rangle _{\mathcal{H}} \geq 0,\quad \forall \ (x,u), (y,v)\in Gph(\Phi).$$
A set-valued map $\Phi$ is maximal monotone if
\begin{enumerate}[{\rm i)}]
  \item  $\Phi$ is  monotone;
  \item  there is no other monotone set-valued map whose graph strictly contains the graph of $\Phi$.
\end{enumerate}
\end{definition}

\begin{lemma}\label{maximonotoweakstrongly}
Let $\Phi$ be maximal monotone. Then its graph is weakly-strongly closed in the sense that if $x_{n}$ converges weakly to $x$ and if $u_{n}\in \Phi(x_{n})$, $u_{n}$ converges strongly to $u$, then $u\in \Phi(x)$.
\end{lemma}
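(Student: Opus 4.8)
The plan is to invoke maximality directly through its defining characterization: a monotone map $\Phi$ is maximal precisely when no pair can be adjoined to $Gph(\Phi)$ without destroying monotonicity. Concretely, to prove $u\in\Phi(x)$ it suffices to show that the pair $(x,u)$ is monotonically related to the whole graph, i.e.
\[ \inner{u-v}{x-y}_{\mathcal{H}}\ge 0 \quad\text{for every } (y,v)\in Gph(\Phi). \]
Indeed, if this inequality held while $(x,u)\notin Gph(\Phi)$, then $Gph(\Phi)\cup\{(x,u)\}$ would still be a monotone graph — the new pair is monotonically related to every old pair by the displayed inequality, and to itself trivially — which strictly enlarges $Gph(\Phi)$ and contradicts condition ii) of Definition \ref{monotonedefinition}. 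So the whole task is to establish the displayed inequality by a limiting argument.

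First I would fix an arbitrary pair $(y,v)\in Gph(\Phi)$. Since $(x_n,u_n)\in Gph(\Phi)$ and $\Phi$ is monotone, we have $\inner{u_n-v}{x_n-y}_{\mathcal{H}}\ge 0$ for every $n$, and it remains to pass to the limit and recover $\inner{u-v}{x-y}_{\mathcal{H}}\ge 0$. Expanding the inner product as
\[ \inner{u_n-v}{x_n-y}_{\mathcal{H}}=\inner{u_n}{x_n}_{\mathcal{H}}-\inner{u_n}{y}_{\mathcal{H}}-\inner{v}{x_n}_{\mathcal{H}}+\inner{v}{y}_{\mathcal{H}}, \]
the three terms involving a single varying sequence pass to the limit routinely: $\inner{u_n}{y}_{\mathcal{H}}\to\inner{u}{y}_{\mathcal{H}}$ from the strong (hence weak) convergence of $u_n$, $\inner{v}{x_n}_{\mathcal{H}}\to\inner{v}{x}_{\mathcal{H}}$ from the weak convergence $x_n\rightharpoonup x$, and the last term is constant in $n$.

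The only delicate term, and the step I expect to be the main obstacle, is the mixed product $\inner{u_n}{x_n}_{\mathcal{H}}$, in which one factor converges only weakly. I would handle it through the splitting
\[ \inner{u_n}{x_n}_{\mathcal{H}}-\inner{u}{x}_{\mathcal{H}}=\inner{u_n-u}{x_n}_{\mathcal{H}}+\inner{u}{x_n-x}_{\mathcal{H}}. \]
The second term tends to $0$ by the weak convergence $x_n\rightharpoonup x$; for the first I would use that a weakly convergent sequence is norm-bounded (Banach--Steinhaus), say $\|x_n\|_{\mathcal{H}}\le M$, so that $|\inner{u_n-u}{x_n}_{\mathcal{H}}|\le M\,\|u_n-u\|_{\mathcal{H}}\to 0$ by the strong convergence of $u_n$. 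Hence $\inner{u_n}{x_n}_{\mathcal{H}}\to\inner{u}{x}_{\mathcal{H}}$, and combining with the routine terms gives $\inner{u_n-v}{x_n-y}_{\mathcal{H}}\to\inner{u-v}{x-y}_{\mathcal{H}}$. Passing to the limit in the monotonicity inequality yields $\inner{u-v}{x-y}_{\mathcal{H}}\ge 0$; since $(y,v)\in Gph(\Phi)$ was arbitrary, the characterization of the first paragraph forces $u\in\Phi(x)$, which completes the argument.
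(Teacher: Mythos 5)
Your argument is correct and is exactly the standard proof that the paper defers to (it omits the details, citing the analogue of \cite[Proposition 3.5.6]{Aubin90}): characterize maximality as ``monotonically related to the whole graph implies membership,'' then pass to the limit in $\inner{u_n-v}{x_n-y}_{\mathcal{H}}\ge 0$, where the only nontrivial point is the mixed term $\inner{u_n}{x_n}_{\mathcal{H}}$, handled correctly via the weak--strong pairing and the norm-boundedness of the weakly convergent sequence. Nothing is missing.
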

\begin{proof} The proof is similar to \cite[Proposition 3.5.6]{Aubin90}, so we omit it.
\end{proof}

Let  $\mathcal{S}$ be a nonempty closed convex subset of $\mathcal{H}$. The normal cone $N_{\mathcal{S}}(x)$ of $\mathcal{S}$ on $x$ is defined by
$$N_{\mathcal{S}}(x)=\Big\{ v\in \mathcal{H}\ \Big|\
\inner{v}{y-x}_{\mathcal{H}}\le 0,\quad \forall\ y\in   \mathcal{S}\Big\}.$$
The metric projection of $x\in \mathcal{H}$ onto $\mathcal{S}$ with norm $\|\cdot\|_{\mathcal{H}}$ is defined by
\begin{equation}\label{metric proj}
\Pi_{\mathcal{S}}(x):=\Big\{ \bar x\in \mathcal{S}\ \Big|\
\|x-\bar x\|_{\mathcal{H}}=\inf_{y\in \mathcal{S}}\|x-y\|_{\mathcal{H}} \Big\}.
\end{equation}
By the fundamental theory in convex analysis, $\Pi_{\mathcal{S}}(x)$ is a  singleton and satisfies
\begin{enumerate}[{\rm i)}]
  \item $ \inner{x-\Pi_{\mathcal{S}}(x)}{y-\Pi_{\mathcal{S}}(x)}_{\mathcal{H}}\le 0, \quad \forall \ y\in \mathcal{S};$
  \item  $\|\Pi_{\mathcal{S}}(x)-\Pi_{\mathcal{S}}(y) \|_{\mathcal{H}}\le \|x-y \|_{\mathcal{H}}, \quad  \forall \ x,y\in \mathcal{H}.$
\end{enumerate}

\begin{definition}\label{VIdefinition}
Let $\mathcal{H}$ be a Hilbert space, $\mathcal{S}\subseteq \mathcal{H}$ be a nonempty closed convex set, and $F:\mathcal{H}\rightarrow \mathcal{H}$ be a given map. The variational inequality problem \vi{F}{ \mathcal{S}} is to
find an $x^*\in \mathcal{S}$ such that
$$\langle F(x^*), y-x^*\rangle_{\mathcal{H}} \geq 0 ,\quad \forall\ y\in \mathcal{S}.$$
\end{definition}

Clearly, $x^*\in \mathcal{S}$ is a solution to \vi{F}{ \mathcal{S}} if and only if $0\in F(x^*)+ N_{\mathcal{S}}(x^*).$

\begin{lemma}\cite[Theorem 3]{Rock70maximonoto}\label{maxnormalmaximonotone}
Let $\mathcal{H}$ be a Hilbert space, $\mathcal{S}\subseteq \mathcal{H}$ be a nonempty closed convex set, and, $F:\mathcal{H}\rightarrow \mathcal{H}$ be a  monotone hemi-continuous  map (i.e. continuous from line segments in $\mathcal{H}$ to the weak topology
in $\mathcal{H}$), then the set-valued map $x\rightsquigarrow F(x)+ N_{\mathcal{S}}(x)$ is maximal monotone.
\end{lemma}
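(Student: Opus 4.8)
The statement is the classical Rockafellar maximality result, and here is how I would reconstruct its proof. The plan is to separate the two assertions hidden in the phrase ``maximal monotone'': monotonicity of $x\rightsquigarrow F(x)+N_{\mathcal{S}}(x)$, which is elementary, and maximality, which is the real content and which I would obtain through Minty's surjectivity criterion.

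First I would verify monotonicity. For a convex $\mathcal{S}$ the normal-cone map is monotone: if $u\in N_{\mathcal{S}}(x)$ and $v\in N_{\mathcal{S}}(y)$, then by definition $\inner{u}{y-x}_{\mathcal{H}}\le 0$ and $\inner{v}{x-y}_{\mathcal{H}}\le 0$, and adding these gives $\inner{u-v}{x-y}_{\mathcal{H}}\ge 0$. Since $F$ is monotone by hypothesis and the sum of monotone maps is monotone, $x\rightsquigarrow F(x)+N_{\mathcal{S}}(x)$ is monotone. The core step is maximality, and here I would invoke Minty's theorem: a monotone operator $T$ on the Hilbert space $\mathcal{H}$ is maximal monotone if and only if $I+T$ is surjective, where $I$ denotes the identity map. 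Applied to $T=F+N_{\mathcal{S}}$, it suffices to show that for every $z\in\mathcal{H}$ there exists $x^*\in\mathcal{S}$ with $z\in x^*+F(x^*)+N_{\mathcal{S}}(x^*)$. Rewriting, $z-x^*-F(x^*)\in N_{\mathcal{S}}(x^*)$ is equivalent to $\inner{x^*+F(x^*)-z}{y-x^*}_{\mathcal{H}}\ge 0$ for all $y\in\mathcal{S}$, that is, $x^*$ solves $\vi{G}{\mathcal{S}}$ for the shifted map $G(x):=x+F(x)-z$.

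Thus the problem reduces to an existence theorem for a single variational inequality. The map $G$ inherits hemicontinuity from $F$, since adding the continuous maps $x\mapsto x$ and $x\mapsto -z$ does not disturb continuity from line segments into the weak topology, and it is strongly monotone with modulus one:
$$\inner{G(x)-G(y)}{x-y}_{\mathcal{H}}=\inner{F(x)-F(y)}{x-y}_{\mathcal{H}}+\norm{x-y}_{\mathcal{H}}^{2}\ge\norm{x-y}_{\mathcal{H}}^{2}.$$
I would then apply the Browder--Stampacchia existence theorem: a strongly monotone hemicontinuous map admits a unique solution of its variational inequality over any nonempty closed convex set. This produces the desired $x^*$, establishes surjectivity of $I+F+N_{\mathcal{S}}$, and closes the argument via Minty's criterion.

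The main obstacle is precisely this last existence result, where hemicontinuity (rather than full continuity) and the possible unboundedness of $\mathcal{S}$ must both be handled. I would prove it through the Minty reformulation: using monotonicity together with the line-segment hemicontinuity of $G$, the Stampacchia problem $\inner{G(x^*)}{y-x^*}_{\mathcal{H}}\ge 0$ for all $y\in\mathcal{S}$ is equivalent to the Minty problem $\inner{G(y)}{y-x^*}_{\mathcal{H}}\ge 0$ for all $y\in\mathcal{S}$, whose solution sets restricted to finite-dimensional sections are convex and weakly closed, so that a KKM or fixed-point argument yields a solution. The strong monotonicity supplies the coercivity estimate needed to confine the search to a bounded, hence weakly compact, subset of $\mathcal{S}$, thereby removing the unboundedness difficulty. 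The delicate point is passing the line-segment hemicontinuity correctly through the limit $t\to 0^{+}$ along $y_t=x^*+t(y-x^*)$ in the equivalence between the Stampacchia and Minty formulations.
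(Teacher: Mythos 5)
The paper does not actually prove this lemma: it is quoted verbatim from Rockafellar's 1970 paper (Theorem 3 of \cite{Rock70maximonoto}) and used as a black box, so there is no internal proof to compare against. Your reconstruction is correct and is the standard Hilbert-space argument. Monotonicity of the sum is elementary, as you say; for maximality you only need the easy direction of Minty's criterion (if $T$ is monotone and $I+T$ is surjective, then any pair monotonically related to $\textup{Gph}(T)$ must already lie in it, by testing against the preimage of $x_0+u_0$), and you correctly reduce surjectivity to solving $\vi{G}{\mathcal{S}}$ for the strongly monotone hemicontinuous map $G(x)=x+F(x)-z$. The remaining existence theorem is exactly the Browder--Hartman--Stampacchia result, and your sketch contains all the essential ingredients in the right order: the Minty--Stampacchia equivalence via $y_t=x^*+t(y-x^*)$ and hemicontinuity as $t\to 0^+$, the KKM/finite-intersection argument on the weakly compact, convex, weakly closed Minty sets (where monotonicity is what gives the KKM covering property), and the a priori bound $\norm{x^*-y_0}_{\mathcal{H}}\le\norm{G(y_0)}_{\mathcal{H}}$ from strong monotonicity that removes the unboundedness of $\mathcal{S}$. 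Two remarks for comparison with the cited source: Rockafellar states Theorem 3 for a hemicontinuous monotone map defined only on $\mathcal{S}$ in a reflexive Banach space, which is strictly more general than the present statement; and since the paper's $F$ is defined on all of $\mathcal{H}$, an alternative one-line derivation is available that is closer to how the result is usually invoked, namely $F$ is maximal monotone by Minty's theorem for everywhere-defined hemicontinuous monotone maps, $N_{\mathcal{S}}=\partial I_{\mathcal{S}}$ is maximal monotone as the subdifferential of a proper lower semicontinuous convex function, and $\operatorname{int}D(F)\cap D(N_{\mathcal{S}})=\mathcal{S}\neq\emptyset$, so Rockafellar's sum theorem applies. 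Your route is more self-contained but must carry the full weight of the variational-inequality existence theorem, which in your write-up is only sketched; if this were to be expanded into a complete proof, the KKM step and the passage from the truncated ball problem back to the unbounded $\mathcal{S}$ are the places that would need to be written out in detail.
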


\subsection{Multistage stochastic variational
inequalities}

In this subsection, we introduce the definition of the multistage stochastic variational inequalities.  The concept of multistage stochastic variational inequalities was first introduced in \cite{Rockafellar Wets17} in the cases that the uncertainty was represented by some discrete random vectors. We shall see that the definition given in \cite{Rockafellar Wets17} also fits for a much general case.

Let $N\in \mathbb{N}$, $m_{1},m_{2},...,m_{N-1}\in \mathbb{N}$, $\xi_j\in \mathcal{L}^0(\Omega,\mathscr{F}, R^{m_{j}})$, $j=1,2,...,N-1$, $\mathscr{O}$ be the collection of all the $P$-null sets.  For $i\in \mathbb{N}$, $1\le i\le N-1$, define
\begin{equation}\label{Fi}
\mathscr{F}_{i}:=\sigma\Big\{\sigma\big(\xi_{1},\xi_{2},...,\xi_{i}\big)\cup \mathscr{O}\Big\}.
\end{equation}
Here, $\sigma\big(\xi_{1},\xi_{2},...,\xi_{i}\big)$ is the $\sigma$-field  generated by $\xi_1,\xi_2,..., \xi_{i}$ and we call $\mathscr{F}_{i}$ the augmented $\sigma$-field  generated by $\xi_1,\xi_2,..., \xi_{i}$.
Let  $\mathscr{F}_{0}:=\{\emptyset,\Omega\}$. It is clear that
$
\mathscr{F}_{0}\subset\mathscr{F}_{1}\subset\mathscr{F}_{2}\subset\cdots\subset\mathscr{F}_{N-1}\subset \mathscr{F}.$ Let $n_{0},n_{1},...,n_{N-1}\in \mathbb{N}$,  $n_{0}+n_{1}+\cdots+n_{N-1}=n$.
Define
\begin{equation}\label{nonantipaspaceN}
\mathcal{N}:=\Big\{x=(x_{0},x_{1},...,x_{N-1})\in \mathcal{L}^2(\Omega,\mathscr{F}, R^n) \ \big|\ x_{i}\in \mathcal{L}^0(\Omega, \mathscr{F}_{i}, R^{n_i}), i=0,1,...,N-1\Big\},
\end{equation}
i.e., $\mathcal{N}$  is the set of all $x=(x_{0},x_{1},...,x_{N-1})\in \mathcal{L}^2(\Omega,\mathscr{F}, R^n)$ such that for any $i=0,1,...,N-1$, $x_{i}$ is an $\mathscr{F}_{i}$-measurable $R^{n_i}$-valued random vector.  Clearly, $\mathcal{N}$ is a closed linear subspace of $\mathcal{L}^2(\Omega,\mathscr{F}, R^n)$. We  call $\mathcal{N}$ the $nonanticipativity$ subspace.

\begin{remark}
By the definition of  $\mathscr{F}_{i}$, $(\Omega,\mathscr{F}_i, P)$ is a complete probability space for any  $i=1,...,N-1$. In addition, by Doob-Dynkin's Lemma, $x_{0}$ is a deterministic vector in $R^{n_0}$, and, for $i=1,2,...,N-1$,  there are Borel measurable maps $\varphi_{i}$ such that $x_{i}=\varphi_{i}(\xi_{1},\xi_{2},...,\xi_{i})$ \mbox{a.s.}
\end{remark}

Also, we consider the orthogonal complementary subspace of $\mathcal{N}$ defined by
\begin{equation}\label{nonantipaspaceM}
\mathcal{M}=\mathcal{N}^\perp=\Big\{y=(y_{0},y_{1},...,y_{N-1})\in \mathcal{L}^2(\Omega,\mathscr{F}, R^n) \ \Big|\ \dbE\inner{x}{y}=0,\forall x\in\mathcal{N}\Big\}.
\end{equation}
By the properties of conditional expectation, for any $x\in \mathcal{N}$ and  any $y\in \mathcal{L}^2(\Omega,\mathscr{F}, R^n)$,
$$
\dbE\inner{x}{y}=\sum_{i=0}^{N-1} \dbE\inner{x_{i}}{y_{i}}
=\sum_{i=0}^{N-1} \dbE\big[\dbE\inner{x_{i}}{y_{i}}|\mathscr{F}_{i}\big]
=\sum_{i=0}^{N-1} \dbE\inner{x_{i}}{\dbE\big[ y_{i}|\mathscr{F}_{i}\big]}.$$
Then, we have
\begin{equation*}
\mathcal{M}=\Big\{y=(y_{0},y_{1},...,y_{N-1})\in \mathcal{L}^2(\Omega,\mathscr{F}, R^n) \ \Big| \  \dbE \big[y_{i} | \mathscr{F}_{i}\big]=0 \ \mbox{a.s.},\ \forall\ i= 0,1,\cdots,N-1\Big\}.
\end{equation*}

Let $C_{i}:\O\rightsquigarrow R^{n_{i}}$ be given $\mathscr{F_{i}}$-measurable set-valued map  with {\em nonempty closed convex values}, $\ i= 0,1,\cdots,N-1$. Consider the set-valued map $C:\O\rightsquigarrow \mrn$ such that
$$C(\o)=C_{0}(\omega)\times C_{1}(\omega)\times\cdots\times C_{N-1}(\omega)\subset R^{n_{0}}\times R^{n_{1}}\times\cdots\times R^{n_{N-1}}, \quad \mbox{a.s.} \ \o\in \O.$$
Using set-valued map $C$ we define a subset  $\mathcal{C}$ of $\mathcal{L}^2(\Omega,\mathscr{F}, R^n)$ by
\begin{equation}\label{constraintsetC}
\mathcal{C}:=\Big\{x=(x_{0},x_{1},...,x_{N-1})\in \mathcal{L}^2(\Omega,\mathscr{F}, R^n)\ \Big|\ x_{i}(\omega)\in C_{i}(\omega)\ \mbox{a.s.}\ \o\in\O, i= 0,\cdots,N-1 \Big\},
\end{equation}
i.e., each element of $\mathcal{C}$ is an $\mathcal{L}^2$-integrable selection of set-valued map $C$. By the definition of $\mathcal{C}$ and Lemma \ref{measurable-selec}, the set $\mathcal{C}$ is a nonempty closed convex set of  $\mathcal{L}^2(\Omega,\mathscr{F}, R^n)$.

The multistage stochastic variational inequality considered in this paper is defined as follows.

\begin{definition}\label{Mstage SVI def}
Let $(\Omega,\mathscr{F},P)$ be a complete probability space, $F$ be a given map from $\mathcal{L}^2(\Omega,\mathscr{F},R^n)$ to $\mathcal{L}^2(\Omega,\mathscr{F}, R^n)$,  $\mathcal{N}$  and $\mathcal{C}$ be defined by \eqref{nonantipaspaceN}  and \eqref{constraintsetC} respectively, the multistage stochastic variational inequality $\mbox{MSVI}(F,\mathcal{C}\cap\mathcal{N} )$ is: To find $x^*\in\mathcal{C}\cap\mathcal{N}$ such that
\begin{equation}\label{SVIbasicdefinition}
-F(x^*)\in N_{\mathcal{C}\cap\mathcal{N}}(x^*),
\end{equation}
where $N_{\mathcal{C}\cap\mathcal{N}}(x^*)$ is the normal cone of $\mathcal{C}\cap\mathcal{N}$ on $x^*$ in $\mathcal{L}^2(\Omega,\mathscr{F}, R^n)$.

\end{definition}

\begin{remark}
The set $C(\o)$ represents the set of all available decisions  for each $\o\in \O$. The random vectors $\xi_{1},\xi_{2},...,\xi_{N-1}$ can be regarded as the observed information of the decision maker. In Definition \ref{Mstage SVI def}, the constraint $x\in \mathcal{C}$ means that, any admissible strategy $x$ should be valued in the decision set $C(\o)$ for $\mbox{a.s.}$ $\o\in \O$, while the nonanticipativity constraint  $x\in \mathcal{N}$ means that the admissible strategy $x $ should be a function (or a reaction) of the observed information up to now, but not rely on the observed information in the future.
\end{remark}

The multistage stochastic variational inequality $\mbox{MSVI}(F,\mathcal{C}\cap\mathcal{N} )$ \eqref{SVIbasicdefinition} is closely related to the following multistage stochastic variational inequality in extensive form.

\begin{definition}\label{Mstage SVI def extend}
Let $(\Omega,\mathscr{F},P)$ be a complete probability space, $F$ be a given map from $\mathcal{L}^2(\Omega,\mathscr{F},R^n)$ to $\mathcal{L}^2(\Omega,\mathscr{F}, R^n)$,  $\mathcal{N}$  and $\mathcal{C}$ be defined by \eqref{nonantipaspaceN}  and \eqref{constraintsetC} respectively.   The  multistage stochastic variational inequality in extensive form is:
To find $x^*\in\mathcal{C}\cap\mathcal{N}$ and $v^*\in \mathcal{M}$ such that
\begin{equation}\label{SVIextensivedefini}
-F(x^*)-v^*\in N_{\mathcal{C}}(x^*).
\end{equation}

\end{definition}

\begin{lemma}\label{appendix6}
Let $F:\mathcal{L}^2(\Omega,\mathscr{F},R^n)\to \mathcal{L}^2(\Omega,\mathscr{F},R^n)$, $\mathcal{C}$ be the nonempty closed convex set defined by (\ref{constraintsetC}). Then, for a random vector $x^*\in \mathcal{C}$, the condition
\begin{equation}\label{VI integral}
\dbE\langle F(x^*), x-x^*\rangle \geq0,\quad \forall~x\in\mathcal{C}
\end{equation}
is equivalent to the pointwise (for sample point) type condition
\begin{equation}\label{VI pointwise}
\langle F(x^*)(\omega), x-x^*(\omega)\rangle \geq0\quad \forall \ x \in C(\omega),\ \mbox{a.s.}~\omega\in\Omega.
\end{equation}
\end{lemma}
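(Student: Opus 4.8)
The plan is to establish the two implications separately. The forward direction \eqref{VI pointwise} $\Rightarrow$ \eqref{VI integral} is just integration: given any $x\in\mathcal{C}$, by definition $x(\omega)\in C(\omega)$ for a.s.\ $\omega$, so evaluating \eqref{VI pointwise} at each such $\omega$ with this particular value gives $\langle F(x^*)(\omega),x(\omega)-x^*(\omega)\rangle\ge 0$ a.s.; integrating over $\Omega$ against $P$ (the integrand is in $\mathcal{L}^1$ by Cauchy--Schwarz, since $F(x^*),x,x^*\in\mathcal{L}^2$) yields \eqref{VI integral}.

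The reverse direction \eqref{VI integral} $\Rightarrow$ \eqref{VI pointwise} is where the work lies, and I would argue by contradiction via measurable selection. Suppose \eqref{VI integral} holds but \eqref{VI pointwise} fails, i.e.\ the set of $\omega$ admitting some $u\in C(\omega)$ with $\langle F(x^*)(\omega),u-x^*(\omega)\rangle<0$ has positive probability. Writing this set as the increasing union over $k\in\mathbb{N}$ of
$$ D_k:=\Big\{\omega\in\Omega \ \Big|\ \exists\, u\in C(\omega),\ \langle F(x^*)(\omega),u-x^*(\omega)\rangle\le -1/k\Big\}, $$
continuity of measure lets me fix $k$ with $P(D_k)>0$. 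On $D_k$ I would consider the set-valued map
$$\Gamma(\omega):=\Big\{u\in C(\omega)\ \Big|\ \langle F(x^*)(\omega),u-x^*(\omega)\rangle\le -1/k\Big\},$$
which has nonempty closed convex values, being the intersection of the closed convex set $C(\omega)$ with a closed half-space.

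The crux is the measurability of $\Gamma$: it is the intersection of the measurable closed-valued map $C$ with the half-space map $\omega\mapsto\{u\in R^n \mid \langle F(x^*)(\omega),u\rangle\le \langle F(x^*)(\omega),x^*(\omega)\rangle-1/k\}$, whose normal vector $F(x^*)(\omega)$ and offset are $\mathscr{F}$-measurable functions of $\omega$; measurability of such an intersection of closed-valued measurable maps into $R^n$ is a standard fact, and the same projection/domain argument shows $D_k$ is measurable. Granting this, Lemma \ref{measurable-selec} furnishes a measurable selection $\varphi$ of $\Gamma$ on $D_k$. The remaining obstacle is integrability, since $\varphi$ need not lie in $\mathcal{L}^2$; I would resolve it by truncation. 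Because $D_k=\bigcup_{M\in\mathbb{N}}(D_k\cap\{|\varphi-x^*|\le M\})$, some $M$ gives $E:=D_k\cap\{|\varphi-x^*|\le M\}$ of positive probability. Setting
$$x(\omega):=\begin{cases} \varphi(\omega), & \omega\in E,\\ x^*(\omega), & \omega\in\Omega\setminus E,\end{cases}$$
I get $x(\omega)\in C(\omega)$ a.s.\ while $x-x^*$ is bounded and supported on $E$, so $x\in\mathcal{L}^2$ and hence $x\in\mathcal{C}$. Then, since $F(x^*)\in\mathcal{L}^2$ and $x-x^*$ is bounded, the integrand is integrable and
$$\dbE\langle F(x^*),x-x^*\rangle=\int_{E}\langle F(x^*)(\omega),\varphi(\omega)-x^*(\omega)\rangle\,P(d\omega)\le -\tfrac{1}{k}P(E)<0,$$
contradicting \eqref{VI integral}. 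I expect the measurability of $\Gamma$ (and of the domain set $D_k$) to be the main technical point, with the truncation securing square-integrability a close second; everything else is bookkeeping.
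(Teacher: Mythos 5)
Your proposal is correct and follows essentially the same route as the paper's own proof: argue by contradiction, slice the bad set by the threshold $-1/k$, extract a measurable selection via Lemma \ref{measurable-selec}, and plug the resulting modification of $x^*$ into \eqref{VI integral} to get a strictly negative expectation. The only (immaterial) differences are that the paper secures square-integrability by intersecting with the balls $\bar B(0,r)$ \emph{before} selecting rather than truncating the selection afterward as you do, and it justifies measurability of the relevant sets through the graph and the projection theorem of Castaing--Valadier rather than through measurability of intersections of measurable closed-valued maps.
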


Lemma \ref{appendix6} has been proved in \cite{Rockafellar Wets17} in the discrete cases. For the convenience of readers, we provide a proof of Lemma \ref{appendix6} in the general probability space in the appendix.

By the definition of the normal cone of a closed convex set, the multistage stochastic variational inequality in extensive form (\ref{SVIextensivedefini}) can be rewritten as
\begin{equation}\label{SVIextensivedefini add}
\dbE\inner{F(x^*)+v^*}{x-x^*}\ge 0,\quad \forall\ x\in \mathcal{C}.
\end{equation}
By Lemma \ref{appendix6}, it is equivalent to finding  $x^*\in\mathcal{C}\cap\mathcal{N}$ and $v^*\in \mathcal{M}$ such that for a.s. $\o\in \O$,
\begin{equation}\label{SVIexten pointwise}
\inner{F(x^*)(\o)+v^*(\o)}{x-x^*(\o)}\ge 0,\quad \forall\ x\in C(\o).
\end{equation}

Since $N_{\mathcal{C}}(x^*)+N_{\mathcal{N}}(x^*)\subset N_{\mathcal{C} \cap\mathcal{N}}(x^*)$,  any solution to the  multistage stochastic variational inequality in extensive form \eqref{SVIextensivedefini} is a solution to $\mbox{MSVI}(F,\mathcal{C}\cap\mathcal{N} )$ \eqref{SVIbasicdefinition}.
When  the sum rule
\begin{equation}\label{sumpriciple_normcone}
N_{\mathcal{C} \cap\mathcal{N}}(x)=N_{\mathcal{C}}(x)+N_{\mathcal{N}}(x), \quad\ \forall \ x\in\mathcal{C} \cap\mathcal{N}
\end{equation}
is satisfied, \eqref{SVIbasicdefinition} is equivalent to \eqref{SVIextensivedefini}.

In what follows, we give a sufficient condition under which the sum rule \eqref{sumpriciple_normcone} holds true.

\begin{theorem}\label{th sum rule}
Let $(\Omega,\mathscr{F},P)$ be a complete probability space,   $\mathcal{N}$  and $\mathcal{C}$ be defined by \eqref{nonantipaspaceN}  and \eqref{constraintsetC} respectively. Suppose that for $i=0,1,...,N-1$, $C_{i}$ are $\mathscr{F}_{i}$-measurable. Then, the sum rule \eqref{sumpriciple_normcone} holds true.
\end{theorem}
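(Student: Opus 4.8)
The inclusion $N_{\mathcal{C}}(x)+N_{\mathcal{N}}(x)\subseteq N_{\mathcal{C}\cap\mathcal{N}}(x)$ is automatic (and was already recorded just before the statement), so the plan is to establish the reverse inclusion $N_{\mathcal{C}\cap\mathcal{N}}(x)\subseteq N_{\mathcal{C}}(x)+N_{\mathcal{N}}(x)$. First I would record two structural facts. Since $\mathcal{N}$ is a closed subspace and $x\in\mathcal{N}$, the set $\{y-x : y\in\mathcal{N}\}$ is all of $\mathcal{N}$, so $N_{\mathcal{N}}(x)=\mathcal{N}^{\perp}=\mathcal{M}$. Moreover, by Remark \ref{Rem projection CE} the orthogonal projection of any $v\in\mathcal{L}^2(\O,\mathscr{F},R^n)$ onto $\mathcal{N}$ is computed componentwise by $\bar v_{i}:=\dbE[v_{i}\,|\,\mathscr{F}_{i}]$, with $v-\bar v\in\mathcal{M}$. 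Thus for $v\in N_{\mathcal{C}\cap\mathcal{N}}(x)$ it suffices to prove $\bar v\in N_{\mathcal{C}}(x)$, since then $v=\bar v+(v-\bar v)\in N_{\mathcal{C}}(x)+\mathcal{M}=N_{\mathcal{C}}(x)+N_{\mathcal{N}}(x)$.

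Next I would reduce the claim $\bar v\in N_{\mathcal{C}}(x)$ to a pointwise statement. For every $y\in\mathcal{C}\cap\mathcal{N}$ one has $y-x\in\mathcal{N}$, hence $\dbE\inner{v-\bar v}{y-x}=0$ and therefore $\dbE\inner{\bar v}{y-x}=\dbE\inner{v}{y-x}\le 0$; that is, $\bar v\in N_{\mathcal{C}\cap\mathcal{N}}(x)$ while $\bar v\in\mathcal{N}$, so each $\bar v_{i}$ is $\mathscr{F}_{i}$-measurable. Applying Lemma \ref{appendix6} with the constant map $F\equiv-\bar v$, the target $\bar v\in N_{\mathcal{C}}(x)$ is equivalent to the almost-sure pointwise inclusions $\bar v_{i}(\o)\in N_{C_{i}(\o)}(x_{i}(\o))$ for each $i$, i.e. $\inner{\bar v_{i}(\o)}{z-x_{i}(\o)}\le 0$ for all $z\in C_{i}(\o)$.

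To prove these pointwise inclusions I would argue by contradiction. Suppose the support-function-type quantity $h_{i}(\o):=\sup_{z\in C_{i}(\o)}\inner{\bar v_{i}(\o)}{z-x_{i}(\o)}$ is positive on a set $A_{i}\in\mathscr{F}_{i}$ of positive probability; here $h_{i}$ is $\mathscr{F}_{i}$-measurable because it is the support function of the $\mathscr{F}_{i}$-measurable set-valued map $C_{i}$ evaluated at the $\mathscr{F}_{i}$-measurable pair $(\bar v_{i},x_{i})$. After truncating to a bounded region and passing to a positive-measure subset $B\subseteq A_{i}$ on which the $\mathscr{F}_{i}$-measurable map $\o\rightsquigarrow\{z\in C_{i}(\o): |z|\le k,\ \inner{\bar v_{i}(\o)}{z-x_{i}(\o)}\ge \tfrac1m\}$ has nonempty closed values, Lemma \ref{measurable-selec} applied on $(\O,\mathscr{F}_{i},P)$ produces a bounded $\mathscr{F}_{i}$-measurable selection $z_{i}$. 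Setting $y_{j}=x_{j}$ for $j\ne i$ and $y_{i}=x_{i}\chi_{\O\setminus B}+z_{i}\chi_{B}$ yields $y\in\mathcal{C}\cap\mathcal{N}$ (feasible pointwise, nonanticipative since the modification is $\mathscr{F}_{i}$-measurable, and in $\mathcal{L}^2$ since the modification is bounded), yet $\dbE\inner{\bar v}{y-x}=\dbE\big[\chi_{B}\inner{\bar v_{i}}{z_{i}-x_{i}}\big]\ge \tfrac1m P(B)>0$, contradicting $\bar v\in N_{\mathcal{C}\cap\mathcal{N}}(x)$. Hence $h_{i}\le 0$ a.s. for every $i$, the pointwise inclusions hold, and the sum rule \eqref{sumpriciple_normcone} follows.

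The hard part will be the measurable-selection step: one must select the improving direction $z_{i}$ so that it is again $\mathscr{F}_{i}$-measurable (nonanticipative) while keeping the competitor $y$ in $\mathcal{L}^2$. This is precisely where the hypothesis that each $C_{i}$ is $\mathscr{F}_{i}$-measurable is indispensable, since without it the improving selection would only be $\mathscr{F}$-measurable and could not be assembled into a test vector of $\mathcal{C}\cap\mathcal{N}$. The truncation by $k$ together with the threshold $\tfrac1m$ (with $A_{i}$ exhausted as $k,m\to\infty$) are the routine devices that simultaneously guarantee nonempty closed values for Lemma \ref{measurable-selec} and square-integrability of the constructed $y$.
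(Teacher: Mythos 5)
Your proposal is correct and follows essentially the same route as the paper: decompose $v\in N_{\mathcal{C}\cap\mathcal{N}}(x)$ as $\Pi_{\mathcal{N}}(v)+\Pi_{\mathcal{M}}(v)$, use single-component competitors together with an $\mathscr{F}_{i}$-measurable selection argument (the paper's ``similar proof of Lemma \ref{appendix6}'' on $(\Omega,\mathscr{F}_{i},P)$) to show the $\mathcal{N}$-part satisfies the pointwise inclusions $\bar v_{i}(\o)\in N_{C_{i}(\o)}(x_{i}(\o))$ and hence lies in $N_{\mathcal{C}}(x)$, while the $\mathcal{M}$-part is $N_{\mathcal{N}}(x)$. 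You spell out the truncation/threshold details of the measurable-selection step that the paper leaves implicit, but the decomposition and the key lemmas are identical.
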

\begin{proof}
Let $x\in \mathcal{C}\cap\mathcal{N}$ and define
\begin{equation}\label{sum rule eq1}
\mathcal{L}(x)\!:=\! \Big\{\xi=(\xi_{0},\xi_{1},...,\xi_{N-1})\!\in\! \mathcal{N}\ \Big|\ \xi_{i}(\o)\in N_{C_{i}(\o)}(x_{i}(\o)) \ \mbox{a.s.} \ \o\in \O,  \  \forall \ i=0,1,...,N-1  \Big\}.
\end{equation}
We claim that
\begin{equation*}
N_{\mathcal{C}\cap\mathcal{N}}(x)=\mathcal{M}+\mathcal{L}(x).
\end{equation*}

For any $\eta\in \mathcal{M}$, $\xi\in \mathcal{L}(x)$ and $y\in \mathcal{C}\cap\mathcal{N}$,
$$
\dbE\inner{\eta+\xi}{y-x}=\dbE\inner{\xi}{y-x}
=\sum_{i=0}^{N-1} \dbE\inner{\xi_{i}}{y_{i}-x_{i}}\le 0.
$$
It implies  $\mathcal{M}+\mathcal{L}(x)\subset N_{\mathcal{C}\cap\mathcal{N}}(x)$.

On the other hand, for any $\xi\in N_{\mathcal{C}\cap\mathcal{N}}(x)$ we have $$\xi= \Pi_{\mathcal{N}}(\xi)+\Pi_{\mathcal{M}}(\xi),$$
where $\Pi_{\mathcal{N}}(\xi)$ and $\Pi_{\mathcal{M}}(\xi)$ are respectively the projections of $\xi$ onto $\mathcal{N}$ and $\mathcal{M}$. Clearly $\Pi_{\mathcal{M}}(\xi)\in \mathcal{M}$, $\Pi_{\mathcal{N}}(\xi)\in\mathcal{N}$.  By  $\xi\in N_{\mathcal{C}\cap\mathcal{N}}(x)$, we have
$$
0\ge\dbE\inner{\xi}{y-x}
=\dbE\inner{\Pi_{\mathcal{N}}(\xi)+\Pi_{\mathcal{M}}(\xi)}{y-x}
=\dbE\inner{\Pi_{\mathcal{N}}(\xi)}{y-x},\ \forall  y\in \mathcal{C}\cap\mathcal{N}.
$$
For any fixed $i=0,1,...,N-1$ and any $y_{i}\in \mathcal{L}^2(\Omega,\mathscr{F}_{i}, R^{n_i})$ such that $y_{i}(\o)\in C_{i}(\o)$  a.s. $\o\in \O$, we define $\tilde y=(x_0,x_1,...,x_{i-1},y_i,x_{i+1},..., x_{N-1})$. Then, $\tilde y\in \mathcal{C}\cap\mathcal{N}$ and
\begin{equation}\label{eq 2.15+}
0\ge\dbE\inner{\Pi_{\mathcal{N}}(\xi)}{\tilde y-x}=\dbE\inner{(\Pi_{\mathcal{N}}(\xi))_{i}}{y_{i}-x_{i}}.
\end{equation}
Here, $(\Pi_{\mathcal{N}}(\xi))_{i}$ is the $i$th component of $\Pi_{\mathcal{N}}(\xi)$.  Clearly, $(\Pi_{\mathcal{N}}(\xi))_{0}\in N_{C_0}(x_0)$. For any  $i=1,2,...,N-1$, by the definition of $\mathscr{F}_{i}$, $(\Omega,\mathscr{F}_{i},P)$ is a complete probability space.  Then, by \eqref{eq 2.15+}, the $\mathscr{F}_{i}$-measurability of $C_{i}$  and a similar proof of Lemma \ref{appendix6}, we conclude that
$$(\Pi_{\mathcal{N}}(\xi))_{i}(\o)\in N_{C_{i}(\o)}(x_{i}(\o))\ \mbox{a.s.}\ \o\in \O.$$
Therefore, $\Pi_{\mathcal{N}}(\xi)\in \mathcal{L}(x)$. Consequently, $N_{\mathcal{C}\cap\mathcal{N}}(x)\subset \mathcal{M}+\mathcal{L}(x)$. This proves $N_{\mathcal{C}\cap\mathcal{N}}(x)=\mathcal{M}+\mathcal{L}(x)$.

Since
$$N_{\mathcal{C}}(x)+N_{\mathcal{N}}(x)=N_{\mathcal{C}}(x)+ \mathcal{M}\subset N_{\mathcal{C}\cap\mathcal{N}}(x),$$
to prove \eqref{sumpriciple_normcone}, we only need to prove that $ \mathcal{L}(x)\subset N_{\mathcal{C}}(x)$. Using a similar proof of Lemma \ref{appendix6} again we deduce that
\begin{equation}\label{sum rule eq2}
N_{\mathcal{C}}(x)= \Big\{\xi\in \mathcal{L}^2(\Omega,\mathscr{F}, R^{n}) \ \Big|\ \xi_{i}(\o)\in N_{C_{i}(\o)}(x_{i}(\o))\  \mbox{a.s.}\ \o\in\O, \  \forall \ i=0,1,...,N-1 \Big\}.
\end{equation}
Then, the conclusion follows immediately from the definition of $\mathcal{L}(x)$.
\end{proof}

\begin{remark}
Let $\mathcal{H}$ be a Hilbert space,  $\mathcal{S}_1, \mathcal{S}_2\subset \mathcal{H}$ be nonempty closed convex subsets of $\mathcal{H}$.
When $\mathcal{H}$ is a finite dimensional space, by \cite[Corollary
23.8.1]{Rockafellar 1970},
\begin{equation}\label{eq sumpriciple}
N_{\mathcal{S}_1\cap\mathcal{S}_2}(x)=N_{\mathcal{S}_1}( x)+N_{\mathcal{S}_2}( x)
\end{equation}
holds true if $ri(\mathcal{S}_1)\cap ri(\mathcal{S}_2) \neq \emptyset$, where $ri(\mathcal{S}_1)$ and $ri(\mathcal{S}_2)$ are the relative interiors of $\mathcal{S}_1$ and $\mathcal{S}_2$, respectively.  However, in the general infinite dimensional cases,  the condition $ri(\mathcal{S}_1)\cap ri(\mathcal{S}_2) \neq \emptyset$ is not enough to ensure the sum rule \eqref{eq sumpriciple}. A counterexample can be found in \cite{Borwein Goebel 2003}. In the infinite dimensional cases, the sum rule \eqref{eq sumpriciple} holds true if
\begin{equation}\label{Att-Brezis CQ}
\text{cone}(\mathcal{S}_1-\mathcal{S}_2) \text{ is a closed linear subspace of } \mathcal{H}.
\end{equation}
Here, $\text{cone}(\mathcal{S}_1-\mathcal{S}_2)$ is the cone generated by $\mathcal{S}_1-\mathcal{S}_2$. \eqref{Att-Brezis CQ} is called the Attouch-Brezis qualification condition and a proof the sum rule \eqref{eq sumpriciple} under condition \eqref{Att-Brezis CQ} can be found in \cite{Mordukhovich2022}.

Note that $\mathcal{N}$ is a closed linear subspace of $\mathcal{L}^2( \O,\mathscr{F},\mrn)$. When the sample space $\O$ is a finite set,  $\mathcal{L}^2( \O,\mathscr{F},\mrn)$ is isomorphic to a finite dimensional Euclidean space (see \cite{Rockafellar Wets17} or Section 4 of this paper for more details). In this case, the equality \eqref{sumpriciple_normcone} holds true if  $ri(\mathcal{C})\cap \mathcal{N}\neq\emptyset$.  It is proved in \cite[Theorem 2.3]{Rockafellar Wets17} that, when the sample space $\O$ is a finite set, $ri(\mathcal{C})\cap \mathcal{N}\neq\emptyset$ if there is $\hat x\in \mathcal{N}$ such that $\hat x(\omega)\in ri(C(\omega))$ for all $\o\in \O$.
In the general cases, $\mathcal{L}^2( \O,\mathscr{F},\mrn)$ is an infinite dimensional Hilbert space. In order that the sum rule \eqref{sumpriciple_normcone} holds true, it seems a natural way to assume the Attouch-Brezis qualification condition for $\mathcal{C}$ and $\mathcal{N}$.
However, in the stochastic cases, the Attouch-Brezis qualification condition may fail even when $C(\o)$ is a polyhedral for any $\o\in \O$.

Let us consider a simple example. Let $\O=[0,1]$, $\mathscr{F}$ be the $\sigma$-field of Lebesgue  measurable sets and $P$ be the Lebesgue  measure on $[0,1]$. Clearly, $(\Omega,\mathscr{F},P)$ is a complete probability space. We consider the special case of one-stage with $C(\o)\equiv [0,+\infty)$. Then,   $\mathcal{C}=\{x\in L^{2}(0,1)\ |\  x(\o)\ge 0,\ a.e.\ \o\in [0,1]\}$ and  $\mathcal{N}=\{x\in L^{2}(0,1)\ |\  x(\o)= a \  a.e.\ \o\in [0,1], a\in R \}$.
By Theorem \ref{th sum rule}, $N_{\mathcal{C}\cap\mathcal{N}}(x)=N_{\mathcal{C}}(x)+N_{\mathcal{N}}(x)$ for any $x\in \mathcal{C}\cap\mathcal{N}$. For any $x\in L^{\infty}(0,1)$,
$$x(\o)=x(\o)+\|x\|_{L^{\infty}}-\|x\|_{L^{\infty}},\quad a.e.\ \o\in [0,1],$$
where $\|x\|_{L^{\infty}}$ is the $L^{\infty}$ norm of $x$.
Letting $\tilde x(\o)=x(\o)+\|x\|_{L^{\infty}}$, a.e. $\o\in [0,1]$, we have $\tilde x\in \mathcal{C}$ and
$$x=\tilde x- \|x\|_{L^{\infty}}\in \mathcal{C}-\mathcal{N}.$$
It follows that $L^{\infty}(0,1)\subset  \text{cone}(\mathcal{C}-\mathcal{N})$. Therefore, if $\text{cone}(\mathcal{C}-\mathcal{N})$ is a closed linear subspace of $L^{2}(0,1)$, it must be $L^{2}(0,1)$. However, for any $x\in L^{2}(0,1)\setminus L^{\infty}(0,1)$ such that $x(\o)< 0$ a.e. $\o\in(0,1)$, $x\notin \text{cone}(\mathcal{C}-\mathcal{N})$. Therefore, the  Attouch-Brezis qualification condition dose not hold true in this example.
\end{remark}

As illustrated in \cite{Rockafellar Wets17}, one of the motivations for studying $\mbox{MSVI}(F,\mathcal{C}\cap\mathcal{N} )$ \eqref{SVIbasicdefinition} is to solve the multistage stochastic convex optimization problem: To find  $x^*\in\mathcal{C}\cap\mathcal{N}$ such that
\begin{equation*}
f(x^*)=\operatorname*{min}_{x\in\mathcal{C}\cap\mathcal{N}}f(x),
\end{equation*}
where $f(x)=\dbE\, g(x(\o),\o)$ and $g:R^n\times \O\rightarrow R$ is a given function which is continuously differentiable and convex with respect to the first variable and $\mathscr{F}$-measurable with respect to the second variable.  If $\dbE\, |g(0,\o)|<+\infty$ and there is an $\mathscr{F}$-measurable nonnegative random variable $\eta$ with $\dbE\, \eta^2(\o)<+\infty$ such that
\begin{equation}\label{eq 2.18}
|g(u,\o)-g(v,\o)|\leq \eta(\o)|u-v|\ \mbox{a.s.}\ \o\in\O,\quad \forall\ u,v\in \mrn,
\end{equation}
then, the function $f: \mathcal{L}^2(\Omega,\mathscr{F}, R^n)\to R$ is a well-defined  differentiable convex function on $\mathcal{L}^2(\Omega,\mathscr{F}, R^n)$  with its G\^{a}teaux derivative
\begin{equation}\label{expect nabla}
 D f(x)=\nabla_{x} g(x(\o),\o)\ \mbox{a.s.}\ \o\in\O,\quad \forall\ x\in \mathcal{L}^2(\Omega,\mathscr{F}, R^n).
\end{equation}

Indeed, for any $x\in \mathcal{L}^2(\Omega,\mathscr{F}, R^n)$,
$$|f(x)|=|\dbE\, g(x(\o),\o)|\le\dbE\, |g(x(\o),\o)-g(0,\o)|+\dbE\, |g(0,\o)|\le\dbE\, \eta(\o)|x(\o)|+\dbE\, |g(0,\o)|<+\infty.$$
This proves that $f$ is well-defined on $\mathcal{L}^2(\Omega,\mathscr{F}, R^n)$.
For any $d\in \mathcal{L}^2(\Omega,\mathscr{F}, R^n)$, by \eqref{eq 2.18}  we have
\begin{equation*}
\Big|\frac{ g(x(\o)+td(\o),\o)-g(x(\o),\o)}{t}\Big|\le \eta(\o)|d(\o)|  \ \mbox{a.s. } \o\in\O
\end{equation*}
and $\dbE\, \eta(\o)|d(\o)|<+\infty$.
By Lebesgue Dominated Convergence Theorem, we have
\begin{eqnarray*}
\lim_{t\to 0^+}\frac{f(x+td)-f(x)}{t}&=&\lim_{t\to 0^+}\dbE\Big[\frac{ g(x(\o)+td(\o),\o)-g(x(\o),\o)}{t}\Big]\\
&=&\dbE\inner{\nabla_{x} g(x(\o),\o)}{d(\o)}.
\end{eqnarray*}
Then, by Riesz Representation Theorem, we have \eqref{expect nabla}.

By the basic theory of convex optimization, solving the above multistage stochastic convex optimization problem is equivalent to solving the $\mbox{MSVI}(F,\mathcal{C}\cap\mathcal{N} )$ \eqref{SVIbasicdefinition} with $F(x)=Df(x)$.

To end this section, we provide an application of $\mbox{MSVI}(F,\mathcal{C}\cap\mathcal{N} )$ \eqref{SVIbasicdefinition} in stochastic optimal control problem. We refer the reader to \cite{YongZhou1999} for some basic notions in stochastic control theory.

\begin{example}\label{ex SOCP}
Let $(\Omega, \mathscr{F},  \mathbb{F}, P)$  be a complete filtered probability space with the filtration $\mathbb{F}=\{\mathscr{F}_{t} \}_{0\le t\le 1}$, on which a one-dimensional standard  Wiener
process $W(\cdot)$ is defined such that $\mathbb{F}$ is the natural filtration generated by $W(\cdot)$ (augmented by all the P-null sets), i.e., for any $t\in [0,1]$,
$$\mathscr{F}_{t}=\sigma(\mathscr{F}_{t}^{W}\cup\mathscr{O})$$
with $\mathscr{F}_{t}^{W}:=\sigma(W(s): s\in [0,t])$ and $\mathscr{O}$ being the collection of all $P$-null sets.

Let $A,\tilde A:[0,1]\to R^{n\times n}$ and $B,D:[0,1]\to R^{n\times m}$ be given bounded Borel measurable matrix-valued maps, $G\in R^{n\times n}$ be a given positive semi-definite matrix and $U\subset R^m$ be a given closed convex set. Consider the controlled linear stochastic differential equation
\begin{align}\label{eq controlsys}
\left\{
\begin{array}{l}	d x(t)=\big(A(t)x(t)+B(t)u(t)\big)dt+\big(\tilde A(t)x(t)+D(t)u(t)\big)dW(t), \quad t\in[0,1]\\
x(0)=x_0
\end{array}
\right.
\end{align}
with the Mayer type cost functional
\begin{equation}\label{costfun}
J(u)=\frac{1}{2}\dbE\langle G(x(1)-\eta),x(1)-\eta\rangle.
\end{equation}
Here $u\in \mathcal{U}$ is called the control,
\begin{eqnarray*}
\mathcal{U}\!\!\!&:=&\!\!\Big\{u\in L^{2}_{\mathbb{F}}(0,1;R^m)\ \Big| u(t)\in U,\ a.e.\ t\in[0,1], \ \mbox{a.s.}   \Big\},
\end{eqnarray*}
$L^{2}_{\mathbb{F}}(0,1;R^m)$ is the space of $R^m$-valued $\mathscr{B}([0,1])\otimes\mathscr{F}$-measurable  stochastic processes $\varphi$ such that for any $t\in [0,1]$, $\varphi(t)$ is $\mathscr{F}_t$-measurable  and $\big[\mathbb{E}\int_{0}^{1}|\varphi(t)|^2dt\big]^{\frac{1}{2}}<\infty$.
$x$ is the state valued in $R^n$ with initial datum $x_0\in R^n$ and control $u$, and, $\eta:\O\to R^n$ is an $\mathscr{F}_{1}$-measurable random vector. $x$ is a solution to \eqref{eq controlsys} if
$$x(t)=x_0+ \int_{0}^{t}\big(A(s)x(s)+B(s)u(s)\big)ds+  \int_{0}^{t}\big(\tilde A(s)x(s)+D(s)u(s)\big)dW(s) \ \mbox{a.s.},\ \forall\ t\in [0,1].$$
Here, $\int_{0}^{t}\varphi(s)dW(s)$ is the It\^{o}'s stochastic integral of stochastic process $\varphi$ on $[0,t]$ with respect to Wiener process $W$. In order that the It\^{o}'s stochastic integral is well-defined, we usually assume $\varphi$ is a square-integrable stochastic process and  for any $t\in [0,1]$, $\varphi(t)$ is $\mathscr{F}_t$-measurable.
By the standard theory of stochastic analysis (see for instance \cite[Chapter 1]{YongZhou1999}),  for any $x_0\in R^n$ and $u\in \mathcal{U}$, \eqref{eq controlsys} admits unique solution. We consider the following
stochastic optimal control problem with control constraints: To find $u^*\in \mathcal{U}$ such that
\begin{equation}\label{SOCP}
J(u^*)=	\min_{u\in \mathcal{U}}  J(u).
\end{equation}

The continuous-time stochastic optimal control problem \eqref{SOCP} has important applications in mathematical finance (see, for instance \cite{LiZhouLim2002}) and can be regarded as a stochastic convex programming problem defined on the Hilbert space of square-integrable stochastic processes. Its discretized  approximation problem is a multistage stochastic convex  programming problem with nonanticipativity constraint. To see this, let $N$ be a given large enough natural number, $\Delta=1/N$, $\Delta W_{i}=W((i+1)/N)-W(i/N)$, $i=0,1,...,N-1$ and consider the Euler  approximation of the stochastic differential equation \eqref{eq controlsys}:
$$\left\{
\begin{aligned}
&x_{i+1}=x_{i}+[A_{i}x_{i}+B_i u_i]\Delta+[\tilde A_i x_{i}+D_i u_i]\Delta W_i,\ i=0,...,N-1,\\[-0.2em]
&x_0\in R^n.
\end{aligned}
\right.$$
Here, $A_i=A(i/N)$, $B_i=B(i/N)$, $\tilde A_i=\tilde A(i/N)$, $D_i=D(i/N)$, $\mathscr{U}^{N}:=(u_0,u_1,...,u_{N-1}):\O\to (R^{m})^{N}$ is the discretized control and $\mathscr{X}^{N}:=(x_0,x_1,...,x_N):\O\to (R^{n})^{N+1}$ is the corresponding discretized state. Denote $\Psi_{i}=I+A_{i}\Delta+\tilde A_{i}\Delta W_i$, $\Lambda_{i}=B_{i}\Delta+D_{i}\Delta W_i$, $i=0,1,...,N-1$. We have
\begin{equation}\label{discr stat X1}
x_{N}=\Big[\prod_{i=0}^{N-1}\Psi_{i}\Big]x_0+\sum_{i=0}^{N-1} \Big[\prod_{j=i+1}^{N-1}\Psi_{j}\Big]\Lambda_{i}u_i.
\end{equation}
Then, the corresponding  discretized cost function is represented by
\begin{equation}\label{disc cotsfun}
J^{N}(\mathscr{U}^{N})
\!=\!\frac{1}{2}\dbE\inner{\!G\Bigg\{\Big[\prod_{i=0}^{N-1}\Psi_{i}\Big]x_0\!+\!\sum_{i=0}^{N-1} \! \Big[\!\prod_{j=i+1}^{N-1}\Psi_{j}\Big]\Lambda_{i}u_i-\eta\Bigg\}}{\Big[\!\prod_{i=0}^{N-1}\Psi_{i}\Big]x_0\!+\!\sum_{i=0}^{N-1}\! \Big[\!\prod_{j=i+1}^{N-1}\Psi_{j}\Big]\Lambda_{i}u_i-\eta\!}.
\end{equation}

By the definition of  standard Wiener process, $\Delta W_i: \O\to R$, $k=0,1,...,N-1$ is a sequence of  mutually independent Gaussian random variables with expectation $0$ and variance $1/N$. Let $\mathscr{F}_0=\{\emptyset, \O\}$, $$\mathscr{F}_{i}=\sigma\Big\{\sigma(\Delta W_0, \Delta W_1,...,\Delta W_{i-1})\cup \mathscr{O}\Big\} ,\   i=1,2,...,N-1$$ and define
\begin{equation*}
\mathcal{N}:=\Big\{\mathscr{U}^{N}=(u_{0},u_{1},...,u_{N-1})\in \mathcal{L}^2(\Omega,\mathscr{F}, (R^m)^{N}) \ \big|\ u_{i}\in \mathcal{L}^0(\Omega, \mathscr{F}_{i}, R^{m}), i=0,1,...,N-1\Big\}.
\end{equation*}
In addition, we define
\begin{equation*}
\mathcal{C}:=\Big\{\mathscr{U}^{N}=(u_{0},u_{1},...,u_{N-1})\in \mathcal{L}^2(\Omega,\mathscr{F}, (R^m)^{N})\ \Big|\ u_{i}(\omega)\in U\ \mbox{a.s. } \o\in\O,\ i= 0,1,\cdots,N-1 \Big\}.
\end{equation*}
The discretized approximation problem for the continuous-time stochastic optimal control problem \eqref{SOCP} is: To find $(\mathscr{U}^{N})^*\in \mathcal{C}\cap \mathcal{N}$ such that
\begin{equation}\label{disc SOCP}
J^{N}((\mathscr{U}^{N})^*)=\min_{\mathscr{U}^{N}\in \mathcal{C}\cap \mathcal{N}}J^{N}(\mathscr{U}^{N}).
\end{equation}
Clearly, \eqref{disc SOCP} is a multistage stochastic convex programming problem with nonanticipativity constraint.

Denote
$$ \zeta=\Big[\prod_{i=0}^{N-1}\Psi_{i}\Big]x_0,\ Z=\Bigg(\Big[\prod_{j=1}^{N-1}\Psi_{j}\Big]\Lambda_{0},
\Big[\prod_{j=2}^{N-1}\Psi_{j}\Big]\Lambda_{1},..., \Lambda_{N-1}\Bigg)$$
and let $b=Z^{\top}G(\zeta-\eta)$ and $M=Z^{\top}GZ$.
The derivative of $J^N$ with respect to $\mathscr{U}^{N}$ is
\begin{equation}\label{disc derivetive}
DJ^{N}(\mathscr{U}^{N})=M\mathscr{U}^{N}+b
\end{equation}
and the first order necessary condition for the stochastic convex programming problem \eqref{disc SOCP} is
$$-DJ^{N}((\mathscr{U}^{N})^*)\in N_{\mathcal{C}\cap \mathcal{N}}((\mathscr{U}^{N})^*), $$
which is a multistage stochastic variational inequality  with nonanticipativity constraint.
\end{example}

\section{Algorithm and convergence analysis}\label{sec3}

In this section, we shall propose the prediction-correction ADMM for solving $\mbox{MSVI}(F,\mathcal{C}\cap\mathcal{N} )$ (\ref{SVIbasicdefinition}) and prove that the  sequence generated by that  algorithm converges weakly in $\mathcal{L}^2(\Omega,\mathscr{F}, R^n)$ to a solution to $\mbox{MSVI}(F,\mathcal{C}\cap\mathcal{N} )$ (\ref{SVIbasicdefinition}).

We assume that the map $F:\mathcal{L}^2(\Omega,\mathscr{F}, R^n)\to \mathcal{L}^2(\Omega,\mathscr{F}, R^n)$ satisfies the following conditions.
\begin{enumerate}
\item [(A1)]  The map $F$ is monotone.
\item [(A2)] The map $F$ is Lipschitz continuous with constant $L_{F}>0$, i.e.,
$$\|F(x)-F(y)\|_{\mathcal{L}^2}\le L_{F}\|x-y\|_{\mathcal{L}^2},\quad \forall \ x,y\in \mathcal{L}^2(\Omega,\mathscr{F}, R^n).$$
\end{enumerate}

The following is a simple example in which the above assumptions are satisfied.

\begin{example}
Let $f(x)=\dbE\, g(x(\o),\o)$ and $g:R^n\times \O\rightarrow R$ be a given function which is continuously differentiable and convex with respect to the first variable. Assume $\dbE\, |g(0,\o)|<+\infty$ and there is an $\mathscr{F}$-measurable nonnegative random variable $\eta$ with $\dbE\, \eta^2(\o)<+\infty$ such that  \eqref{eq 2.18} holds true. Moreover, assume that there is a  constant $L>0$ such that
\begin{equation}\label{eq 3.1+}
|\nabla_{x}g(u,\o)-\nabla_{x}g(v,\o)|\le L|u-v|\ \mbox{a.s. } \o\in\O,\ \forall \ u,v\in \mrn,
\end{equation}
then $F(\cdot)=Df(\cdot)$ is monotone and Lipschitz continuous on $\mathcal{L}^2(\Omega,\mathscr{F}, R^n)$.  Actually, by the convexity of $g$ with respect to the first variable, we have, for any $x,y\in \mathcal{L}^2(\Omega,\mathscr{F}, R^n)$,
$$\inner{\nabla_{x}g(x(\o),\o)-\nabla_{x}g(y(\o),\o)}{x(\o)-y(\o)}\ge 0  \ \mbox{a.s. }\o\in\O$$
which implies that
$$\dbE\inner{F(x)-F(y)}{x-y}=\dbE\inner{\nabla_{x}g(x(\o),\o)-\nabla_{x}g(y(\o),\o)}{x(\o)-y(\o)}\ge 0.$$
In addition, by \eqref{eq 3.1+}, for any $x,y\in \mathcal{L}^2(\Omega,\mathscr{F}, R^n)$,
$$\|F(x)-F(y)\|_{\mathcal{L}^2}
=\big[\dbE|\nabla_{x}g(x(\o),\o)-\nabla_{x}g(y(\o),\o)|^2\big]^{\frac{1}{2}}
\le L\big[\dbE|x-y|^2\big]^{\frac{1}{2}}=L\|x-y\|_{\mathcal{L}^2}.$$

\end{example}

\subsection{Prediction-correction ADMM}

In this subsection, we will introduce the prediction-correction ADMM for $\mbox{MSVI}(F,\mathcal{C}\cap\mathcal{N} )$ (\ref{SVIbasicdefinition}).
In order to illustrate the key idea of this algorithm clearly, let us go back for a moment  to the multistage stochastic convex optimization problem: To find  $x^*\in \mathcal{C}\cap\mathcal{N}$ such that
\begin{equation}\label{miniexpectfunction}
f(x^*)=\operatorname*{min}_{x\in\mathcal{C}\cap\mathcal{N}}f(x)
\end{equation}
with $f(x)=\dbE\, g(x(\o),\o)$ and $g:R^n\times \O\rightarrow R$.
Clearly,  problem (\ref{miniexpectfunction})  is equivalent to the following optimization problem with a separable objective function and a linear equality constraint:
\begin{equation}\label{StocOptiProConstraint}
\text{min}\Big\{f(x)+I_{\mathcal{N}}(y)\ \Big|\  x \in \mathcal{C}, y\in \mathcal{L}^2(\Omega,\mathscr{F}, R^n), x-y=0\Big\},
\end{equation}
where $I_{\mathcal{N}}(\cdot)$ is the indicator function of the nonanticipativity subspace $\mathcal{N}$  defined by
\begin{equation*}
I_{\mathcal{N}}(y):=\left\{
\begin{aligned}
&0,\quad \quad\quad\quad\text{if}~ y \in \mathcal{N};\\
&+\infty, \quad\quad~\text{if}~ y \notin \mathcal{N}.
\end{aligned}
\right.
\end{equation*}

Denote $\big[\mathcal{L}^2(\Omega,\mathscr{F}, R^n)\big]^3=\mathcal{L}^2(\Omega,\mathscr{F}, R^n)\times \mathcal{L}^2(\Omega,\mathscr{F}, R^n)\times \mathcal{L}^2(\Omega,\mathscr{F}, R^n)$.
Let $\beta > 0$ and define the  augmented Lagrange function for problem  (\ref{StocOptiProConstraint}) by
\begin{equation}\label{Lagrangefunction}
L_{\beta}(x,y,\lambda)=
f(x)+I_{\mathcal{N}}(y) -\dbE \langle\lambda,x-y\rangle +\frac{\beta}{2}\dbE |x-y|^2,
\end{equation}
for any $(x,y,\lambda)^{\top}\in \big[\mathcal{L}^2(\Omega,\mathscr{F}, R^n)\big]^3$.

Given a triplet $(x^k,y^k,\lambda^k)^{\top}\in \mathcal{C}\times \mathcal{N}\times \mathcal{L}^2(\Omega,\mathscr{F}, R^n)$, the classical ADMM  generates a new iterate $(x^{k+1},y^{k+1},\lambda^{k+1})^{\top}$ via the following  procedure
\begin{equation}\label{ADMMstochaprogram}
\begin{cases}
x^{k+1}=\operatorname*{argmin}\limits_{x\in\mathcal{C}}
 \big\{ f(x)-\dbE~ \langle\lambda^{k},x-y^{k}\rangle +\frac{\beta}{2}\dbE~ |x-y^k|^2\big\},\\[+0.5em]
y^{k+1}=\operatorname*{argmin}\limits_{y\in\mathcal{L}^2(\Omega,\mathscr{F}, R^n)}
 \big\{I_{\mathcal{N}}(y)-\dbE~\langle\lambda^{k},x^{k+1}-y\rangle
+\frac{\beta}{2}\dbE~ |x^{k+1}-y|^2\big\},\\[+0.5em]
\lambda^{k+1}=\lambda^{k}-\beta(x^{k+1}-y^{k+1}).
\end{cases}
\end{equation}
By the first order necessary condition in optimization,  we have $(x^{k+1},y^{k+1},\lambda^{k+1})^{\top}$ satisfies
\begin{equation}\label{ADMM inclusion}
\left\{
\begin{array}{l}
-D f(x^{k+1})+ \lambda^{k}-\beta(x^{k+1}-y^k)\in N_{\mathcal{C}}(x^{k+1}),\\[+0.5em]
-\lambda^{k}+\beta(x^{k+1}-y^{k+1})\in N_{\mathcal{N}}(y^{k+1}),\\[+0.5em]
\lambda^{k+1}=\lambda^{k}-\beta(x^{k+1}-y^{k+1}).
\end{array}\right.
\end{equation}
By $Df(x^{k+1})=\nabla_{x} g(x^{k+1}(\o),\o)$, a.s. $\o\in\O$ and  Lemma \ref{appendix6}, the first variational inclusion in (\ref{ADMM inclusion}) is equivalent to the following (sample point) pointwise version
$$
-\nabla_{x} g(x^{k+1}(\o),\o)+ \lambda^k(\o)-\beta(x^{k+1}(\o)-y^k(\o))\in N_{C(\o)}(x^{k+1}(\o)) \ \mbox{a.s. } \o\in\O.$$
Letting $\hat u^k:=x^{k+1}$, $u^{k+1}:=y^{k+1}$, $v^{k}:=-\lambda^{k}$,
the second variational inclusion in (\ref{ADMM inclusion}) can be rewritten equivalently as $u^{k+1}=\Pi_{\mathcal{N}}\big(\hat{u}^{k}+\frac{1}{\beta}v^k\big)$. In addition, if we choose $v^{0}:=-\lambda^{0}\in \mathcal{M}$, then we can obtain by induction that $v^{k}\in \mathcal{M}$,
$$u^{k+1}= \Pi_{\mathcal{N}}\big(\hat{u}^{k}+\frac{1}{\beta}v^k\big) =\Pi_{\mathcal{N}}\big(\hat{u}^{k}\big)+\frac{1}{\beta}\Pi_{\mathcal{N}}\big(v^k\big)=\Pi_{\mathcal{N}}\big(\hat{u}^{k}\big)$$
and $v^{k+1}=v^{k}+\beta(\hat{u}^{k}-\Pi_{\mathcal{N}}\big(\hat{u}^{k}\big)) =v^{k}+\beta\Pi_\mathcal{M}(\hat u^k)$. Now, taking $F(\cdot)=D f(\cdot)$, \eqref{ADMM inclusion} becomes
\begin{equation}\label{PHA}
\left\{
\begin{array}{l}
-F(\hat u^k)(\o)-v^k(\o)-\beta(\hat u^k(\o)-u^k(\o))\in N_{C(\o)}(\hat u^k(\o))\ \mbox{a.s. } \o\in\O,\\[+0.4em]
u^{k+1}=\Pi_{\mathcal{N}}\big(\hat{u}^{k}\big),\\[+0.4em]
v^{k+1}=v^{k}+\beta\Pi_\mathcal{M}(\hat u^k),
\end{array}\right.
\end{equation}
which coincides with the progressive hedging algorithm in \cite{Rockafellar Sun 2019}.

Note that in  (\ref{ADMM inclusion}) (resp. (\ref{PHA})), to obtain $x^{k+1}$  (resp. $\hat u^k$) one has to solve a collection of parameterized (with parameter $\o\in \O$) finite dimensional variational inequalities. When the structure of those variational inequalities are complicated (for example, high nonlinearity of $F(\cdot)$ or $D f(\cdot)$), solving those finite dimensional variational inequalities will become time-consuming.

Taking $F(x)=D f(x)$, by the  properties of metric projection,  (\ref{ADMM inclusion}) can be rewritten   as
\begin{equation}\label{ADMM projection}
\left\{
\begin{array}{l}
x^{k+1}=\Pi_{\mathcal{C}}\big\{x^k-\frac{1}{\beta}[F(x^{k+1})-\lambda^k
+\beta(x^k-y^k)]\big\},\\[+0.5em]
y^{k+1}=\Pi_{\mathcal{N}}\big\{y^k-\frac{1}{\beta}[\lambda^{k}-\beta(x^{k+1}-y^k)]\big\};\\[+0.5em]
\lambda^{k+1}=\lambda^k-\beta(x^{k+1}-y^{k+1}).
\end{array}\right.
\end{equation}
To obtain an explicit iterative algorithm, one natural way is to replace  $x^{k+1}$ by $x^{k}$ directly on the right side of the first equation in (\ref{ADMM projection}). However, as pointed out in \cite{HeJCM2006} for deterministic structured variational inequalities, such modification might lead to divergence and some proper correction is necessary to ensure the convergence. Here we shall adapt the prediction-correction ADMM which was proposed in \cite{HeJCM2006} to solve the multistage stochastic variational inequality $\mbox{MSVI}(F,\mathcal{C}\cap\mathcal{N} )$ (\ref{SVIbasicdefinition}).  Before giving the algorithm, we introduce some useful notations which will be used in the sequel.

Let $\beta, r >0$ and define $G$ from $\big[\mathcal{L}^2(\Omega,\mathscr{F}, R^n)\big]^3$ to  $\big[\mathcal{L}^2(\Omega,\mathscr{F}, R^n)\big]^3$
by
\begin{equation}\label{G}
G(\theta)
=\begin{pmatrix}
\beta rx\\
\beta y\\
\frac{1}{\beta}\lambda
\end{pmatrix},\quad
\forall \theta=\begin{pmatrix}
 x\\
y\\
\lambda
\end{pmatrix}\in\big[\mathcal{L}^2(\Omega,\mathscr{F}, R^n)\big]^3.
\end{equation}
Clearly, $G$ is a symmetry operator and $G^{-1}$ exists with
\begin{equation*}
G^{-1}(\vartheta)=\begin{pmatrix}
\frac{1}{\beta r}u\\
\frac{1}{\beta} v\\
 \beta w
\end{pmatrix},\quad\forall\
\vartheta=\begin{pmatrix}
u\\
v\\
w
\end{pmatrix}\in \big[\mathcal{L}^2(\Omega,\mathscr{F}, R^n)\big]^3.
\end{equation*}
Let   $\theta^k=(x^k,y^k,\lambda^k)^{\top}$, $\tilde \theta^k=(\tilde{x}^k,\tilde{y}^k,\tilde{\lambda}^k)^{\top}\in \big[\mathcal{L}^2(\Omega,\mathscr{F}, R^n)\big]^3$ ($k\in \mathbb{N}$), define
\begin{equation}\label{SVIxixk}
\zeta_{x}^k:=F(x^k)
-F(\tilde{x}^k)+\beta(x^k-\tilde{x}^k),
\end{equation}
and denote
\begin{equation*}
\zeta^k:=\begin{pmatrix}
\zeta^{k}_{x}\\
0\\
0
\end{pmatrix}
\end{equation*}
and
\begin{equation}\label{theaktildexik}
d(\theta^k,\tilde{\theta}^{k},\zeta^k):=\theta^k-\tilde{\theta}^k-G^{-1}\zeta^k.
\end{equation}

The prediction-correction ADMM  for $\mbox{MSVI}(F,\mathcal{C}\cap\mathcal{N} )$ (\ref{SVIbasicdefinition}) is defined as follows.

\bigskip
\begin{algorithm}
\caption{\bf Algorithm 3.1: Prediction-correction ADMM}\label{algo1}
\textbf{Step 0.}
Set  $\beta, r >0$, $r>\frac{L_{F}}{\beta}+1$, $\alpha\in (0,1)$, $\theta^0=(x^0, y^0,\lambda^0)^{\top}\in \mathcal{C}\times\mathcal{N}\times\mathcal{L}^2(\Omega,\mathscr{F}, R^n)$,   $k=0$.

\textbf{Step 1: Prediction}

\hspace{+2em}Step 1.1. Compute
\begin{equation}\label{precoreSVIx}
\tilde{x}^{k}=\Pi_{\mathcal{C}}\{x^k-\frac{1}{\beta r}[F(x^{k})-\lambda^k+\beta(x^k-y^k)]\}.
\end{equation}

\hspace{+2em}Step 1.2. Compute
\begin{equation}\label{precoreSVIy}
\tilde{y}^{k}=\Pi_{\mathcal{N}}\{y^k-\frac{1}{\beta}
[\lambda^k-\beta(\tilde{x}^{k}-y^k)]\}.
\end{equation}

\hspace{+2em}Step 1.3. Update $\tilde{\lambda}^k$ via
\begin{equation}\label{precoreSVIlambda}
\tilde{\lambda}^{k}=\lambda^k-\beta(\tilde{x}^{k}-\tilde{y}^{k}).
\end{equation}

\textbf{Step 2: Correction}

Set
\begin{equation}\label{thetak+1thetak}
\theta^{k+1}=\theta^{k}-\alpha d(\theta^{k},\tilde{\theta}^{k},\zeta^k),
\end{equation}
let $k:=k+1$ and return to Step 1.
\end{algorithm}
\bigskip

\begin{remark}\label{Remark3.1}
\begin{enumerate}[{\rm i)}]
  \item  In Step 1.1 of Algorithm 3.1, by the properties of metric projection onto a nonempty closed convex set, the projection
$$\tilde{x}^{k}=\Pi_{\mathcal{C}}\{x^k-\frac{1}{\beta r}[F(x^{k})-\lambda^k+\beta(x^k-y^k)]\}$$
can be rewritten as
\begin{equation}\label{proj for x inner prud}
\dbE\langle x^k-\frac{1}{\beta r}[F(x^{k})-\lambda^k+\beta(x^k-y^k)]- \tilde{x}^{k},z-\tilde{x}^{k}\rangle\leq 0,\quad \forall z\in \mathcal{C}.
\end{equation}
By Lemma \ref{appendix6},  the inequality (\ref{proj for x inner prud}) is equivalent to the following  pointwise form
$$
\langle x^k(\o)-\frac{1}{\beta r}[F(x^{k})(\o)-\lambda^k(\o)+\beta(x^k(\o)-y^k(\o))]- \tilde{x}^{k}(\o),z-\tilde{x}^{k}(\o)\rangle\leq 0,\forall z\in C(\o),\ \mbox{a.s. }\o\in\O.
$$
Therefore, to compute $\tilde{x}^{k}$ we can compute pointwisely  the projections in finite dimensional space
\begin{equation*}
\tilde{x}^{k}(\o)=\Pi_{C(\o)}\big\{x^k(\o)-\frac{1}{\beta r}[F(x^{k})(\o)-\lambda^k(\o)+\beta(x^k(\o)-y^k(\o))]\big\}
\end{equation*}
for almost every $\o\in \O$. This approach is feasible when the sample space $\O$ is a finite set. In the general cases, solving $\tilde{x}^{k}$ is equivalent to solving the following convex stochastic optimization problem  without the nonanticipativity constraint:
$$\min_{z\in \mathcal{C}} \dbE \Big|z -x^k+\frac{1}{\beta r}[F(x^{k})-\lambda^k+\beta(x^k-y^k)] \Big|^2.$$
  \item In Step 1.2 of Algorithm 3.1, we need to compute the projection onto the closed linear subspace $\mathcal{N}$, which is quite different from the projection in Step 1.1. As illustrated in the above, in Step 1.1., the projection can be transformed into a collection of metric projections (with  parameter $\o\in \O$) in finite dimensional space. On the other hand, to calculate the projection onto the closed linear subspace $\mathcal{N}$ one needs to compute a collection of conditional expectations. Indeed, by Remark   \ref{Rem projection CE}, $\tilde{y}^{k}=(\tilde{y}^{k}_{0},\tilde{y}^{k}_{1},...,\tilde{y}^{k}_{N-1} )$ can be represented by
      $$\tilde{y}^{k}_{i}=\dbE\Big\{y^k_{i}-\frac{1}{\beta}
\big[\lambda^k_{i}-\beta(\tilde{x}^{k}_{i}-y^k_{i})\big]\Big|\ \mathscr{F}_{i}\Big\},\quad i=0,1,...,N-1. $$
If $\xi_{1},\xi_2,...,\xi_{N-1}$ are discrete random vectors,  then, by Example
\ref{example for CE}, the explicit formulas of those conditional expectations can be obtained. In the general cases, the Monte Carlo method has to be used to compute those conditional expectations. That will be much more complicated and time-consuming.
\end{enumerate}
\end{remark}

\begin{remark}
In \cite{HeJCM2006},  two correction approaches are given. Note that in the second correction approach given by \cite{HeJCM2006} one needs to compute one more metric projection. As we have seen in Remark  \ref{Remark3.1}, to compute the metric projection in the stochastic cases will take up much time. Therefore, we adapt here only the first approach of \cite{HeJCM2006}.
\end{remark}

\subsection{Convergence}

In this subsection, we shall prove the weak convergence of the  sequence generated by Algorithm 3.1.

Let $\mathcal{K}:=\mathcal{C}\times\mathcal{N}\times\mathcal{L}^2(\Omega,\mathscr{F}, R^n)$ and define map $T$ from  $\big[\mathcal{L}^2(\Omega,\mathscr{F}, R^n)\big]^3$ to $ \big[\mathcal{L}^2(\Omega,\mathscr{F}, R^n)\big]^3$ by
\begin{equation}\label{thetaT(theta)}
T(\theta):=\begin{pmatrix}
F(x)-\lambda\\
\lambda\\
x-y
\end{pmatrix}, \quad \forall \
\theta=\begin{pmatrix}
 x\\
y\\
\lambda
\end{pmatrix}\in \mathcal{K}.
\end{equation}
Obviously, under condition (A1)-(A2), $T$ is monotone and Lipschitz  continuous.
The following result gives the relationship among  $\vi{T}{\mathcal{K}}$,   $\mbox{MSVI}(F,\mathcal{C}\cap\mathcal{N} )$ (\ref{SVIbasicdefinition}) and the multistage stochastic variational inequality in  extensive form \eqref{SVIextensivedefini}.

\begin{lemma}\label{lemma 3.1}
The following assertions hold true.
\begin{enumerate}[i)]
  \item $\theta^*=(x^*, y^*, \lambda^*)^{\top}$ is a solution to variational inequality \vi{T}{\mathcal{K}} if and only if  $x^*=y^*$  and $(x^*,-\lambda^*)$ is a solution to the multistage stochastic variational inequality in extensive form \eqref{SVIextensivedefini}.
  \item  If $\theta^*=(x^*, y^*, \lambda^*)^{\top}$ is a solution to variational inequality \vi{T}{\mathcal{K}},
then $x^*=y^*$, and $x^*$ is a solution to $\mbox{MSVI}(F,\mathcal{C}\cap\mathcal{N} )$ (\ref{SVIbasicdefinition}).
  \item If the sum rule $N_{\mathcal{C}\cap \mathcal{N}}(x^*)=N_{\mathcal{C}}(x^*)+N_{\mathcal{N}}(x^*)$ holds true and $x^*$ is a solution to $\mbox{MSVI}(F,\mathcal{C}\cap\mathcal{N} )$ (\ref{SVIbasicdefinition}), then there is $\lambda^*\in \mathcal{M}$ such that $\theta^*:=( x^*,x^*,\lambda^*)^{\top}$
is a solution to \vi{T}{\mathcal{K}}.
\end{enumerate}
\end{lemma}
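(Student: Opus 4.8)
The plan is to unwind the definition of \vi{T}{\mathcal{K}} and exploit that $\mathcal{K}=\mathcal{C}\times\mathcal{N}\times\mathcal{L}^2(\Omega,\mathscr{F}, R^n)$ is a Cartesian product. Since the inner product on $\big[\mathcal{L}^2(\Omega,\mathscr{F}, R^n)\big]^3$ is the sum of the three component inner products, $\theta^*=(x^*,y^*,\lambda^*)^\top\in\mathcal{K}$ solves \vi{T}{\mathcal{K}} (Definition \ref{VIdefinition}) exactly when
\[
\dbE\inner{F(x^*)-\lambda^*}{x-x^*}+\dbE\inner{\lambda^*}{y-y^*}+\dbE\inner{x^*-y^*}{\lambda-\lambda^*}\ge 0
\]
for all $(x,y,\lambda)^\top\in\mathcal{C}\times\mathcal{N}\times\mathcal{L}^2(\Omega,\mathscr{F}, R^n)$. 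The core of the argument is that I may vary each block independently while holding the other two fixed at their starred values.

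For assertion (i), I would first fix $x=x^*$, $y=y^*$ and let $\lambda$ range over the whole space $\mathcal{L}^2(\Omega,\mathscr{F}, R^n)$: then $\lambda-\lambda^*$ sweeps all of $\mathcal{L}^2(\Omega,\mathscr{F}, R^n)$, so $\dbE\inner{x^*-y^*}{\mu}\ge0$ for every $\mu$ forces $x^*=y^*$. With $x^*=y^*\in\mathcal{C}\cap\mathcal{N}$ in hand (note $x^*\in\mathcal{C}$ and $y^*\in\mathcal{N}$ come from $\theta^*\in\mathcal{K}$), fixing $x=x^*$ and letting $y$ run over the subspace $\mathcal{N}$ makes $y-y^*$ sweep all of $\mathcal{N}$, so $\dbE\inner{\lambda^*}{\eta}\ge0$ for every $\eta\in\mathcal{N}$ forces $\lambda^*\in\mathcal{N}^\perp=\mathcal{M}$. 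The remaining block, fixing $y=y^*$ and varying $x\in\mathcal{C}$, reads $\dbE\inner{F(x^*)-\lambda^*}{x-x^*}\ge0$ for all $x\in\mathcal{C}$; writing $v^*=-\lambda^*\in\mathcal{M}$ this is precisely \eqref{SVIextensivedefini add}, so $(x^*,v^*)$ solves the extensive form \eqref{SVIextensivedefini}. The converse is pure substitution: given $x^*=y^*$ and a solution $(x^*,-\lambda^*)$ of \eqref{SVIextensivedefini} with $\lambda^*\in\mathcal{M}$, in the displayed inequality the first term is nonnegative by the extensive form, the third vanishes since $x^*=y^*$, and the second vanishes because $\lambda^*\perp\mathcal{N}$ while $y-x^*\in\mathcal{N}$.

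Assertions (ii) and (iii) then drop out of (i) together with the normal-cone facts already recorded in the paper. For (ii), part (i) supplies $x^*=y^*$ and a solution $(x^*,-\lambda^*)$ of \eqref{SVIextensivedefini}; since $-\lambda^*\in\mathcal{M}=N_{\mathcal{N}}(x^*)$, the inclusion $N_{\mathcal{C}}(x^*)+N_{\mathcal{N}}(x^*)\subset N_{\mathcal{C}\cap\mathcal{N}}(x^*)$ recalled before \eqref{sumpriciple_normcone} gives $-F(x^*)=\big(-F(x^*)+\lambda^*\big)+\big(-\lambda^*\big)\in N_{\mathcal{C}\cap\mathcal{N}}(x^*)$, i.e. $x^*$ solves \eqref{SVIbasicdefinition}. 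For (iii), the sum rule lets me split $-F(x^*)\in N_{\mathcal{C}\cap\mathcal{N}}(x^*)$ as $p+q$ with $p\in N_{\mathcal{C}}(x^*)$ and $q\in N_{\mathcal{N}}(x^*)=\mathcal{M}$; setting $\lambda^*:=-q\in\mathcal{M}$ makes $(x^*,q)=(x^*,-\lambda^*)$ a solution of \eqref{SVIextensivedefini}, and the converse half of (i) yields $\theta^*=(x^*,x^*,\lambda^*)^\top\in\vi{T}{\mathcal{K}}$. The only real subtlety, and the step I would write most carefully, is the decoupling in (i): the passage from the single product-space inequality to three separate block conditions, and in particular the observation that varying over a linear space (all of $\mathcal{L}^2(\Omega,\mathscr{F}, R^n)$ for $\lambda$, the subspace $\mathcal{N}$ for $y$) upgrades a one-sided inequality to an equality or orthogonality, because the admissible test directions occur in $\pm$ pairs. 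Everything after that is substitution and the already-established normal-cone identities.
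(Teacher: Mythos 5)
Your proof is correct and follows essentially the same route as the paper: part (i) is exactly the paper's ``by the definition of $T$ and $\mathcal{K}$'' characterization ($x^*=y^*$, $\lambda^*\in\mathcal{M}$, $-F(x^*)+\lambda^*\in N_{\mathcal{C}}(x^*)$), only with the block-decoupling over the product set $\mathcal{K}$ written out explicitly. For (ii) and (iii) the paper simply cites Theorem~3.2 of Rockafellar--Wets, whereas you give the self-contained argument via the inclusion $N_{\mathcal{C}}(x^*)+N_{\mathcal{N}}(x^*)\subset N_{\mathcal{C}\cap\mathcal{N}}(x^*)$ and the sum rule \eqref{sumpriciple_normcone} --- which is precisely the reasoning the paper itself records just before that equation, so the content is the same.
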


\begin{proof}
By the definition of $T$ and $\mathcal{K}$, $\theta^*$ is a solution to variational inequality \vi{T}{\mathcal{K}} if and only if $x^*=y^*$, $y^*\in\mathcal{N}$, $-\lambda^*\in \mathcal{M}$, and $$-F(x^*)+\lambda^*\in N_{\mathcal{C}}(x^* ).$$
This proves the first assertion. Then, by \cite[Theorem 3.2]{Rockafellar Wets17}, the second and the third assertions hold true.
\end{proof}

From Lemma 3.1 we see that a solution to  $\mbox{MSVI}(F,\mathcal{C}\cap\mathcal{N} )$ (\ref{SVIbasicdefinition}) can be found by solving the variational inequality \vi{T}{\mathcal{K}} if variational inequality \vi{T}{\mathcal{K}} or equivalently, the  multistage stochastic variational inequality in extensive form \eqref{SVIextensivedefini} has a solution. In what follows, we always assume that the solution set of variational inequality \vi{T}{\mathcal{K}} is nonempty.

Let $G$ be defined by (\ref{G}). Define the $G$-norm  and $G$-inner product on $\big[\mathcal{L}^2(\Omega,\mathscr{F}, R^n)\big]^3$  respectively by
\begin{equation}\label{normtheta}
\norm{\theta}_{\mathcal{L}^2,{G}}^2=\beta r\norm{x}_{\mathcal{L}^2}^2+\beta \norm{y}_{\mathcal{L}^2}^2+\frac{1}{\beta} \norm{\lambda}_{\mathcal{L}^2}^2,
\end{equation}
and
\begin{equation}\label{innerproductthetav}
\langle\theta,\vartheta\rangle _{\mathcal{L}^2,{G}}=\langle\theta,G\vartheta\rangle_{\mathcal{L}^2}=
\beta r\langle x,u\rangle_{\mathcal{L}^2}+\beta \langle y,v\rangle_{\mathcal{L}^2}+\frac{1}{\beta}\langle\lambda,w\rangle_{\mathcal{L}^2},
\end{equation}
for any
\begin{equation*}
\theta=\begin{pmatrix}
x\\
y\\
\lambda
\end{pmatrix},\quad
\vartheta=\begin{pmatrix}
 u\\
v\\
w
\end{pmatrix}\in\big[\mathcal{L}^2(\Omega,\mathscr{F}, R^n)\big]^3.
\end{equation*}
In addition, using the $G$-norm, we define the $G$-metric projection operator on to $\mathcal{K}$ by
\begin{equation}\label{PiKGtheta}
\Pi_{\mathcal{K},G}(\theta)=\big\{ \bar\theta\in \mathcal{K}\ \big|\ \norm{\bar\theta-\theta}_{\mathcal{L}^2,G}=\min_{\vartheta\in \mathcal{K}}\norm{\vartheta-\theta}_{\mathcal{L}^2,G}\big\}.
\end{equation}
Clearly, the $G$-metric projection has the following properties:
\begin{enumerate}[{\rm i)}]
  \item $\langle\theta-\Pi_{\mathcal{K},G}(\theta), \vartheta-\Pi_{\mathcal{K},G}(\theta)\rangle_{\mathcal{L}^2,G}\leq0,\ \forall\ \vartheta\in\mathcal{K}.$

\item $\norm{\Pi_{\mathcal{K},G}(\theta)-\Pi_{\mathcal{K},G}(\vartheta)}_{\mathcal{L}^2,G}
    \leq\norm{\theta-\vartheta}_{\mathcal{L}^2,G}$,
    $\forall\ \theta,\vartheta\in\big[\mathcal{L}^2(\Omega,\mathscr{F}, R^n)\big]^3$.
\end{enumerate}

Now, we are in a position to prove the convergence of Algorithm 3.1.
Let $k\in \mathbb{N}$, $\theta^k=(x^k,y^k,\lambda^k)^{\top}\in \mathcal{C}\times \mathcal{N}\times\mathcal{L}^2(\Omega,\mathscr{F}, R^n)$ be the random vector obtained in the $k$-th iteration, and $\tilde \theta^k=(\tilde x^k,\tilde y^k,\tilde \lambda^k)^{\top}\in \mathcal{C}\times \mathcal{N}\times\mathcal{L}^2(\Omega,\mathscr{F}, R^n)$ be the vector generated  by Step 1 of Algorithm 3.1.

Define
\begin{equation}\label{varphithetaktildexik}
\varphi(\theta^{k},\tilde{\theta}^{k},\zeta^k)=\langle\lambda^{k}-\tilde{\lambda}^{k},\tilde{y}^{k}-y^{k}\rangle_{\mathcal{L}^2}
+\langle\theta^{k}-\tilde\theta^{k}, Gd(\theta^k,\tilde{\theta}^{k},\zeta^k)\rangle_{\mathcal{L}^2},
\end{equation}
where $d(\theta^k,\tilde{\theta}^{k},\zeta^k)$ is defined by (\ref{theaktildexik}). We have the following estimate for $\varphi(\theta^{k},\tilde{\theta}^{k},\zeta^k)$.

\begin{proposition}\label{propositionvaphid}
Assume (A2). Choose $r>1+\frac{L_{F}}{\beta}$, where $L_{F}$ is the $Lipschitz$ constant of $F$. Then
\begin{equation}
\varphi(\theta^{k},\tilde{\theta}^{k},\zeta^k)\ge\frac{1}{2}\norm{d(\theta^{k},\tilde{\theta}^{k},\zeta^k)}_{\mathcal{L}^2,G}^{2}.
\end{equation}
\end{proposition}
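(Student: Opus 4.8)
The plan is to reduce the inequality to a pure algebraic identity in the $G$-inner product and then control the single ``bad'' term, the one carrying $\zeta_x^k$, by Lipschitz continuity. Write $a:=\theta^k-\tilde\theta^k$ and $d:=d(\theta^k,\tilde\theta^k,\zeta^k)=a-G^{-1}\zeta^k$, so that $Gd=Ga-\zeta^k$. Since $\zeta^k$ has only its first block nonzero and $G$ is symmetric, I would first record the two identities $\langle a,Gd\rangle_{\mathcal{L}^2}=\|a\|_{\mathcal{L}^2,G}^2-\langle x^k-\tilde x^k,\zeta_x^k\rangle_{\mathcal{L}^2}$ and $\|d\|_{\mathcal{L}^2,G}^2=\|a\|_{\mathcal{L}^2,G}^2-2\langle x^k-\tilde x^k,\zeta_x^k\rangle_{\mathcal{L}^2}+\frac{1}{\beta r}\|\zeta_x^k\|_{\mathcal{L}^2}^2$, where the last term is $\langle G^{-1}\zeta^k,\zeta^k\rangle_{\mathcal{L}^2}=\frac{1}{\beta r}\|\zeta_x^k\|_{\mathcal{L}^2}^2$ read off from the formula for $G^{-1}$. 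Substituting these into the definition \eqref{varphithetaktildexik} of $\varphi$, the two $\langle x^k-\tilde x^k,\zeta_x^k\rangle_{\mathcal{L}^2}$ contributions cancel exactly and I obtain the clean reduction
\[
\varphi(\theta^k,\tilde\theta^k,\zeta^k)-\tfrac12\|d\|_{\mathcal{L}^2,G}^2=\langle\lambda^k-\tilde\lambda^k,\tilde y^k-y^k\rangle_{\mathcal{L}^2}+\tfrac12\|a\|_{\mathcal{L}^2,G}^2-\tfrac{1}{2\beta r}\|\zeta_x^k\|_{\mathcal{L}^2}^2 .
\]

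Next I would expand $\tfrac12\|a\|_{\mathcal{L}^2,G}^2=\tfrac{\beta r}{2}\|x^k-\tilde x^k\|_{\mathcal{L}^2}^2+\tfrac{\beta}{2}\|y^k-\tilde y^k\|_{\mathcal{L}^2}^2+\tfrac{1}{2\beta}\|\lambda^k-\tilde\lambda^k\|_{\mathcal{L}^2}^2$ and deal with the cross term. The key observation is that the indefinite cross term combines with the $y$- and $\lambda$-blocks of the $G$-norm into a nonnegative combination: by Cauchy--Schwarz and Young's inequality, $\langle\lambda^k-\tilde\lambda^k,\tilde y^k-y^k\rangle_{\mathcal{L}^2}\ge-\tfrac{1}{2\beta}\|\lambda^k-\tilde\lambda^k\|_{\mathcal{L}^2}^2-\tfrac{\beta}{2}\|\tilde y^k-y^k\|_{\mathcal{L}^2}^2$, so the sum of these three terms is $\ge 0$ and may be discarded. (This route is cleaner than invoking the orthogonality $\tilde\lambda^k\in\mathcal{M}$ coming from the projection Step~1.2, and it requires no assumption on $\lambda^0$.)

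The only remaining quantity is then $\tfrac{\beta r}{2}\|x^k-\tilde x^k\|_{\mathcal{L}^2}^2-\tfrac{1}{2\beta r}\|\zeta_x^k\|_{\mathcal{L}^2}^2$. Here I would use (A2): since $\zeta_x^k=F(x^k)-F(\tilde x^k)+\beta(x^k-\tilde x^k)$ by \eqref{SVIxixk}, the triangle inequality and Lipschitz continuity give $\|\zeta_x^k\|_{\mathcal{L}^2}\le(L_F+\beta)\|x^k-\tilde x^k\|_{\mathcal{L}^2}$, whence $\tfrac{1}{2\beta r}\|\zeta_x^k\|_{\mathcal{L}^2}^2\le\tfrac{(L_F+\beta)^2}{2\beta r}\|x^k-\tilde x^k\|_{\mathcal{L}^2}^2$. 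The residual is thus bounded below by $\tfrac{1}{2\beta r}\big[(\beta r)^2-(L_F+\beta)^2\big]\|x^k-\tilde x^k\|_{\mathcal{L}^2}^2$, which is nonnegative precisely because $r>1+\tfrac{L_F}{\beta}$ forces $\beta r>\beta+L_F>0$. Assembling the pieces yields $\varphi(\theta^k,\tilde\theta^k,\zeta^k)\ge\tfrac12\|d(\theta^k,\tilde\theta^k,\zeta^k)\|_{\mathcal{L}^2,G}^2$.

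The computation is essentially bookkeeping in the $G$-metric; the one genuine idea, and the step I expect to be the crux, is recognizing that the indefinite cross term $\langle\lambda^k-\tilde\lambda^k,\tilde y^k-y^k\rangle_{\mathcal{L}^2}$ is exactly absorbable by the two positive quadratic blocks adjacent to it, so that after this cancellation the whole estimate collapses to the single scalar comparison $(\beta r)^2\ge(\beta+L_F)^2$ dictated by the step-size condition on $r$.
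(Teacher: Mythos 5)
Your proof is correct and is essentially the paper's own argument in a different bookkeeping order: both hinge on absorbing the cross term $\langle\lambda^k-\tilde\lambda^k,\tilde y^k-y^k\rangle_{\mathcal{L}^2}$ into the $y$- and $\lambda$-quadratic blocks and on the scalar comparison $\|\zeta_x^k\|_{\mathcal{L}^2}\le(L_F+\beta)\|x^k-\tilde x^k\|_{\mathcal{L}^2}\le r\beta\|x^k-\tilde x^k\|_{\mathcal{L}^2}$. The only cosmetic difference is that the paper uses the update rule $\lambda^k-\tilde\lambda^k=\beta(\tilde x^k-\tilde y^k)$ to identify that absorbed block as the exact square $\tfrac{\beta}{2}\|y^k-\tilde x^k\|_{\mathcal{L}^2}^2$ (no orthogonality or assumption on $\lambda^0$ is invoked there either), whereas you discard it by Young's inequality, which is the same completion of squares.
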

\begin{proof}
By the definition of $d(\theta^k,\tilde{\theta}^{k},\zeta^k)$,
\begin{eqnarray}\label{proposition1}
\varphi(\theta^{k},\tilde{\theta}^{k},\zeta^k)&=&\langle\lambda^{k}-\tilde{\lambda}^{k},\tilde{y}^{k}-y^{k}\rangle_{\mathcal{L}^2}
+\langle x^k-\tilde{x}^k,r\beta(x^k-\tilde{x}^k)-\zeta_{x}^k\rangle_{\mathcal{L}^2}\nonumber\\
&&+\langle y^k-\tilde{y}^k,\beta(y^k-\tilde{y}^k)\rangle_{\mathcal{L}^2}
+\langle \lambda^k-\tilde{\lambda}^k,\frac{1}{\beta}(\lambda^k-\tilde{\lambda}^k)\rangle_{\mathcal{L}^2}.
\end{eqnarray}

By $\lambda^k-\tilde{\lambda}^k=\beta(\tilde{x}^k-\tilde{y}^k)$, we have
\begin{eqnarray}\label{proposition2}
&&\langle\lambda^{k}-\tilde{\lambda}^k,\tilde{y}^k-y^k\rangle_{\mathcal{L}^2}+\frac{1}{2}\langle \lambda^k-\tilde{\lambda}^k,\frac{1}{\beta} (\lambda^k-\tilde{\lambda}^k)\rangle_{\mathcal{L}^2}+
\frac{1}{2}\langle y^k-\tilde{y}^k,\beta(y^k-\tilde{y}^k)\rangle_{\mathcal{L}^2}\nonumber\\
&=&\frac{\beta}{2}\norm{y^k-\tilde{y}^k-\frac{1}{\beta}(\lambda^k-\tilde{\lambda}^k)}_{\mathcal{L}^2}^2\nonumber\\
&=&\frac{\beta}{2}\norm{y^k-\tilde{y}^k-\tilde{x}^k+\tilde{y}^k}_{\mathcal{L}^2}^2\nonumber\\
&=&\frac{\beta}{2}\norm{y^k-\tilde{x}^k}_{\mathcal{L}^2}^2.
\end{eqnarray}
Let $\zeta_{x}^k$ be defined by (\ref{SVIxixk}). Then, by the Lipschitz continuity of $F$,
$$
\norm{\zeta_{x}^k}_{\mathcal{L}^2}=\norm{\beta(x^k-\tilde{x}^k)+F(x^k)-F(\tilde{x}^k)}_{\mathcal{L}^2}
\leq (L_{F}+\beta)\norm{x^k-\tilde{x}^k}_{\mathcal{L}^2}
\le r\beta\norm{x^k-\tilde{x}^k}_{\mathcal{L}^2}.
$$
It implies that
\begin{eqnarray}\label{proposition5}
&&\frac{1}{2}\langle \lambda^{k}-\tilde{\lambda}^k,\frac{1}{\beta}(\lambda^k-\tilde{\lambda}^k)\rangle_{\mathcal{L}^2}
+\frac{1}{2} \langle y^k-\tilde{y}^k,\beta(y^k-\tilde{y}^k)\rangle_{\mathcal{L}^2}
+\langle x^k-\tilde{x}^k,r\beta(x^k-\tilde{x}^k)-\zeta_{x}^k\rangle_{\mathcal{L}^2}\nonumber\\
&\ge&\frac{1}{2} \langle \lambda^{k}-\tilde{\lambda}^k,\frac{1}{\beta}(\lambda^k-\tilde{\lambda}^k)\rangle_{\mathcal{L}^2}
+\frac{1}{2} \langle y^k-\tilde{y}^k,\beta(y^k-\tilde{y}^k)\rangle_{\mathcal{L}^2}\nonumber\\
&&\quad+\frac{1}{2}\langle x^k-\tilde{x}^k,r\beta(x^k-\tilde{x}^k)\rangle_{\mathcal{L}^2}\nonumber
-\langle x^k-\tilde{x}^k,\zeta_{x}^k\rangle_{\mathcal{L}^2}+\frac{1}{2r\beta} \langle\zeta_{x}^{k},\zeta_{x}^k \rangle_{\mathcal{L}^2}\nonumber\\
&=&\frac{1}{2\beta}\norm{\lambda^k-\tilde{\lambda}^{k}}_{\mathcal{L}^2}^2
+\frac{\beta}{2}\norm{y^k-\tilde{y}^k}_{\mathcal{L}^2}^{2}
+\frac{1}{2}\langle x^k-\tilde{x}^k
-\frac{1}{r\beta}\zeta_{x}^k,r\beta(x^k-\tilde{x}^k
-\frac{1}{r\beta}\zeta_{x}^k)\rangle_{\mathcal{L}^2}\nonumber\\
&=&\frac{1}{2}\norm{d(\theta^k,\tilde{\theta}^k,\zeta^k)}_{\mathcal{L}^2,G}^{2}   .
\end{eqnarray}
Combining \eqref{proposition1}, \eqref{proposition2} with \eqref{proposition5}, we obtain that
$$
\varphi(\theta^k,\tilde{\theta}^k,\zeta^k)
\ge\frac{\beta}{2}\norm{y^k-\tilde{x}^k}_{\mathcal{L}^2}^{2}
+\frac{1}{2}\norm{d(\theta^k,\tilde{\theta}^k,\zeta^k)}_{\mathcal{L}^2,G}^{2}
\ge\frac{1}{2}\norm{d(\theta^k,\tilde{\theta}^k,\zeta^k)}_{\mathcal{L}^2,G}^{2}.
$$
\end{proof}

Denote by  $Sol~ \vi{T}{\mathcal{K}}$  the solution set of \vi{T}{\mathcal{K}}.
The following theorem shows  the contractive property of the sequence $\{\theta^{k}\}$ generated by Algorithm 3.1.

\begin{theorem}\label{theorem3.1}
Assume (A1)-(A2) and let $r > \frac{L_{F}}{\beta}+1$, $\alpha\in (0,1)$.  Then, for any $\theta^* \in Sol~ \vi{T}{\mathcal{K}}$,
\begin{equation}\label{theorem1:0}
\norm{\theta^{k+1}-\theta^*}_{\mathcal{L}^2,G}^2\leq\norm{\theta^k-\theta^*}_{\mathcal{L}^2,G}^2
-\alpha(1-\alpha)\norm{d(\theta^k,\tilde{\theta}^k,\zeta^k)}_{\mathcal{L}^2,G}^2.
\end{equation}
\end{theorem}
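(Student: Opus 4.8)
The plan is to reduce the claimed contraction to a single scalar inequality and then establish that inequality from Proposition \ref{propositionvaphid}, the monotonicity of $T$, and the variational content of the prediction step. Write $d^k:=d(\theta^k,\tilde\theta^k,\zeta^k)$. Since the correction step \eqref{thetak+1thetak} is $\theta^{k+1}=\theta^k-\alpha d^k$, expanding the $G$-norm gives
\begin{equation*}
\norm{\theta^{k+1}-\theta^*}_{\mathcal{L}^2,G}^2=\norm{\theta^k-\theta^*}_{\mathcal{L}^2,G}^2-2\alpha\langle\theta^k-\theta^*,Gd^k\rangle_{\mathcal{L}^2}+\alpha^2\norm{d^k}_{\mathcal{L}^2,G}^2.
\end{equation*}
Comparing with \eqref{theorem1:0}, it suffices to prove the key estimate $\langle\theta^k-\theta^*,Gd^k\rangle_{\mathcal{L}^2}\ge\tfrac12\norm{d^k}_{\mathcal{L}^2,G}^2$; substituting it and collecting the $\alpha$-terms produces exactly the coefficient $-\alpha(1-\alpha)$.

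To reach the key estimate I would split $\theta^k-\theta^*=(\theta^k-\tilde\theta^k)+(\tilde\theta^k-\theta^*)$. For the first piece, the definition \eqref{varphithetaktildexik} of $\varphi$ together with Proposition \ref{propositionvaphid} yields
\begin{equation*}
\langle\theta^k-\tilde\theta^k,Gd^k\rangle_{\mathcal{L}^2}=\varphi(\theta^k,\tilde\theta^k,\zeta^k)-\langle\lambda^k-\tilde\lambda^k,\tilde y^k-y^k\rangle_{\mathcal{L}^2}\ge\tfrac12\norm{d^k}_{\mathcal{L}^2,G}^2-\langle\lambda^k-\tilde\lambda^k,\tilde y^k-y^k\rangle_{\mathcal{L}^2}.
\end{equation*}
Hence the key estimate follows once I prove the residual inequality $\langle\tilde\theta^k-\theta^*,Gd^k\rangle_{\mathcal{L}^2}-\langle\lambda^k-\tilde\lambda^k,\tilde y^k-y^k\rangle_{\mathcal{L}^2}\ge0$.

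The crux, and the step I expect to be the main obstacle, is to relate the $G$-weighted term $\langle\tilde\theta^k-\theta^*,Gd^k\rangle_{\mathcal{L}^2}$ to the genuinely monotone quantity $\langle T(\tilde\theta^k),\tilde\theta^k-\theta^*\rangle_{\mathcal{L}^2}$. Using $Gd^k=G(\theta^k-\tilde\theta^k)-\zeta^k$ from \eqref{theaktildexik}, I would read off the three components of $Gd^k$ and then extract the variational content of Step 1: the projection characterization of \eqref{precoreSVIx}, after multiplying by $\beta r$ and eliminating $F(x^k)$ through \eqref{SVIxixk} and $\lambda^k$ through \eqref{precoreSVIlambda}, gives for every $z\in\mathcal{C}$
\begin{equation*}
\langle F(\tilde x^k)-\tilde\lambda^k,z-\tilde x^k\rangle_{\mathcal{L}^2}\ge\langle\beta r(x^k-\tilde x^k)-\zeta_{x}^{k}+\beta(y^k-\tilde y^k),z-\tilde x^k\rangle_{\mathcal{L}^2};
\end{equation*}
the projection \eqref{precoreSVIy} onto the subspace $\mathcal{N}$ shows $\tilde\lambda^k\in\mathcal{M}$, so that $\langle\tilde\lambda^k,\tilde y^k-y^*\rangle_{\mathcal{L}^2}=0$ since $\tilde y^k-y^*\in\mathcal{N}$; and \eqref{precoreSVIlambda} gives the identity $\tfrac1\beta(\lambda^k-\tilde\lambda^k)=\tilde x^k-\tilde y^k$, which makes the $\lambda$-components of the two expressions coincide. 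Taking $z=x^*\in\mathcal{C}$ above and assembling the three components, the discrepancy between $\langle\tilde\theta^k-\theta^*,Gd^k\rangle_{\mathcal{L}^2}$ and $\langle T(\tilde\theta^k),\tilde\theta^k-\theta^*\rangle_{\mathcal{L}^2}$ collapses—after invoking $x^*=y^*$ from Lemma \ref{lemma 3.1}—to exactly $\langle\lambda^k-\tilde\lambda^k,\tilde y^k-y^k\rangle_{\mathcal{L}^2}$, yielding
\begin{equation*}
\langle\tilde\theta^k-\theta^*,Gd^k\rangle_{\mathcal{L}^2}-\langle\lambda^k-\tilde\lambda^k,\tilde y^k-y^k\rangle_{\mathcal{L}^2}\ge\langle T(\tilde\theta^k),\tilde\theta^k-\theta^*\rangle_{\mathcal{L}^2}.
\end{equation*}

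Finally, since $\theta^*\in Sol~\vi{T}{\mathcal{K}}$ and $\tilde\theta^k\in\mathcal{K}$, the defining inequality $\langle T(\theta^*),\tilde\theta^k-\theta^*\rangle_{\mathcal{L}^2}\ge0$ together with the monotonicity (A1) of $T$ gives $\langle T(\tilde\theta^k),\tilde\theta^k-\theta^*\rangle_{\mathcal{L}^2}\ge0$, which closes the residual inequality and hence \eqref{theorem1:0}. I expect the bookkeeping in the third paragraph—correctly substituting $\zeta_{x}^{k}$ and $\lambda^k$ into the projection inequality and tracking the leftover cross term $\beta(y^k-\tilde y^k)$—to be the delicate part; the role of the cross term $\langle\lambda^k-\tilde\lambda^k,\tilde y^k-y^k\rangle_{\mathcal{L}^2}$ inside $\varphi$ is precisely to absorb that leftover, and the identity $x^*=y^*$ is what removes the final obstruction.
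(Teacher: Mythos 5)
Your proposal is correct and follows essentially the same route as the paper: the same expansion of the $G$-norm, the same insertion of the cross term $\langle\lambda^k-\tilde\lambda^k,\tilde y^k-y^k\rangle_{\mathcal{L}^2}$ to invoke Proposition \ref{propositionvaphid}, and your ``residual inequality'' is exactly the paper's key claim \eqref{theorem1:2} with the sign flipped. The only cosmetic difference is that you assemble the three projection/orthogonality facts into a single appeal to the monotonicity of $T$, whereas the paper applies the monotonicity of $F$ componentwise before the bookkeeping; the content is identical.
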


\begin{proof}
By the definition of $\theta^{k+1}$, we have
\begin{eqnarray}\label{theorem1:1}
\norm{\theta^{k+1}-\theta^*}_{\mathcal{L}^2,G}^2
&=&\norm{\theta^k-\alpha d(\theta^k,\tilde{\theta}^k,\zeta^k)-\theta^*}_{\mathcal{L}^2,G}^2\nonumber\\[+0.4em]
&=&\norm{\theta^{k}-\theta^*}_{\mathcal{L}^2,G}^2
+\alpha^2\norm{d(\theta^k,\tilde{\theta}^k,\zeta^k)}_{\mathcal{L}^2,G}^2
-2\alpha \langle\theta^k-\theta^*,Gd(\theta^k,\tilde{\theta}^k,\zeta^k)\rangle_{\mathcal{L}^2}\nonumber\\[+0.4em]
&=&\norm{\theta^{k}-\theta^*}_{\mathcal{L}^2,G}^2
+\alpha^2\norm{d(\theta^k,\tilde{\theta}^k,\zeta^k)}_{\mathcal{L}^2,G}^2\nonumber\\[+0.4em]
&&-2\alpha \langle \theta^k-\tilde{\theta}^k,G d(\theta^k,\tilde{\theta}^k,\zeta^k)\rangle_{\mathcal{L}^2}+2\alpha \langle \theta^*-\tilde{\theta}^k,G d(\theta^k,\tilde{\theta}^k,\zeta^k)\rangle_{\mathcal{L}^2}\nonumber\\[+0.4em]
&&-2\alpha \langle \lambda^k-\tilde{\lambda}^k,\tilde{y}^k-y^k\rangle_{\mathcal{L}^2}+2\alpha \langle \lambda^k-\tilde{\lambda}^k,\tilde{y}^k
-y^k\rangle_{\mathcal{L}^2}\nonumber\\[+0.4em]
&=&\norm{\theta^k-\theta^*}_{\mathcal{L}^2,G}^2+\alpha^2\norm{d(\theta^k,\tilde{\theta}^k,\zeta^k)}_{\mathcal{L}^2,G}^2
-2\alpha\varphi(\theta^k,\tilde{\theta}^k,\zeta^k)\nonumber\\[+0.4em]
&&+2\alpha \langle \theta^*-\tilde{\theta}^k,G d(\theta^k,\tilde{\theta}^k,\zeta^k)\rangle_{\mathcal{L}^2}
+2\alpha \langle \lambda^k-\tilde{\lambda}^k,\tilde{y}^k-y^k\rangle_{\mathcal{L}^2}.
\end{eqnarray}

We claim that
\begin{equation}\label{theorem1:2}
\langle \theta^*-\tilde{\theta}^k,G d(\theta^k,\tilde{\theta}^k,\zeta^k)\rangle_{\mathcal{L}^2}+\langle \lambda^k-\tilde{\lambda}^k,\tilde{y}^k-y^k\rangle_{\mathcal{L}^2}\leq0.
\end{equation}
Indeed, since $\theta^*\in Sol~ \vi{T}{\mathcal{K}}$, we have $x^*\in \mathcal{C}$, $y^*\in\mathcal{N}$, $x^*=y^*$, $\lambda^*\in\mathcal{M}$, and
\begin{equation}\label{theorem1:3}
\langle  F(x^*)-\lambda^*, x-x^*\rangle_{\mathcal{L}^2}\geq0,~\forall\ x\in \mathcal{C}.
\end{equation}
Note that $\lambda^k-\tilde{\lambda}^k=\beta(\tilde{x}^k-\tilde{y}^k)$. By (\ref{SVIxixk}) and the definition of $\tilde{x}^k$,
\begin{eqnarray*}
\tilde{x}^{k}&=&\Pi_{\mathcal{C}}\big\{x^k-\frac{1}{\beta r}
[F(x^{k})-\lambda^k+\beta(x^k-y^k)]\big\}\nonumber\\
&=&\Pi_{\mathcal{C}}\big\{x^k-\frac{1}{\beta r}[F(\tilde{x}^k)-\tilde{\lambda}^k
-\beta(y^k-\tilde {y}^{k})+\zeta_{x}^k]\big\}.
\end{eqnarray*}
Then, we have
\begin{equation}\label{theorem1:5}
\langle x^*-\tilde{x}^k,F(\tilde{x}^k)-\tilde{\lambda}^k-\beta(y^k-\tilde{y}^k)+\zeta_{x}^k-\beta r(x^k-\tilde{x}^k)\rangle_{\mathcal{L}^2}\geq0.
\end{equation}
Combining \eqref{theorem1:3} with \eqref{theorem1:5}, we obtain from the monotonicity of $F$ that
\begin{equation}\label{theorem1:7}
\langle x^*-\tilde{x}^k,\lambda^*-\tilde{\lambda}^k-\beta(y^k-\tilde{y}^k)+\zeta_{x}^k-\beta r(x^k-\tilde{x}^k)\rangle_{\mathcal{L}^2}\geq 0.
\end{equation}

Similarly, by
$$\tilde{y}^k=\Pi_{\mathcal{N}}\big\{y^k-\frac{1}{\beta}[\tilde{\lambda}^k-\beta(\tilde{y}^k-y^k)]\big\},$$
we have $\tilde{y}^k\in \mathcal{N}$ and
\begin{equation}\label{theorem1:6}
\langle y^*-\tilde{y}^k,\tilde{\lambda}^k-\beta(\tilde{y}^k-y^k)-\beta(y^k-\tilde{y}^k)\rangle_{\mathcal{L}^2}=\langle y^*-\tilde{y}^k,\tilde{\lambda}^k\rangle_{\mathcal{L}^2}\geq0.
\end{equation}

Since $\lambda^*\in \mathcal{M}$, $y^*,\tilde{y}^k\in \mathcal{N}$, $\inner{\lambda^*}{y^*-\tilde{y}^k}_{\mathcal{L}^2}=0$. Then,  by \eqref{theorem1:6},
$$\langle y^*-\tilde{y}^k,\tilde{\lambda}^k-\lambda^*\rangle_{\mathcal{L}^2}\geq0.
$$
It implies that
\begin{align}\label{theorem1:8}
&\langle y^*-\tilde{y}^k,\beta(y^k-\tilde{y}^k)+(\tilde{\lambda}^k-\lambda^*)\rangle_{\mathcal{L}^2}
\geq \langle y^*-\tilde{y}^k,\beta(y^k-\tilde{y}^k)\rangle_{\mathcal{L}^2}.
\end{align}
In addition,  by $\tilde{\lambda}^k=\lambda^k-\beta(\tilde{x}^k-\tilde{y}^k)$ and $x^*-y^*=0,$
\begin{equation}\label{theorem1:9}
\langle\lambda^*-\tilde{\lambda}^k, \frac{1}{\beta}(\tilde{\lambda}^k-\lambda^k)\rangle_{\mathcal{L}^2}
=\langle \lambda^*-\tilde{\lambda}^k,\tilde{y}^k-\tilde{x}^k\rangle_{\mathcal{L}^2}
=\langle\lambda^*-\tilde{\lambda}^k,\tilde{y}^k-y^*-(\tilde{x}^k-x^*)\rangle_{\mathcal{L}^2}.
\end{equation}
Combining \eqref{theorem1:7}, \eqref{theorem1:8} with \eqref{theorem1:9}, we  obtain
\begin{eqnarray}\label{theorem1:10}
&&\langle\theta^*-\tilde{\theta}^k,Gd(\theta^k,\tilde{\theta}^k,\zeta^k)\rangle_{\mathcal{L}^2}\nonumber\\
&=&\langle x^*-\tilde{x}^k,r \beta(x^k-\tilde{x}^k)-\zeta_{x}^k\rangle_{\mathcal{L}^2}
+\langle y^*-\tilde{y}^k,\beta(y^k-\tilde{y}^k)\rangle_{\mathcal{L}^2}\nonumber\\
&&+\langle \lambda^*-\tilde{\lambda}^k,\frac{1}{\beta}(\lambda^k-\tilde{\lambda}^k)\rangle_{\mathcal{L}^2}\nonumber\\
&\leq& \langle x^*-\tilde{x}^k,\lambda^*-\tilde{\lambda}^k-\beta(y^k-\tilde{y}^k)\rangle_{\mathcal{L}^2}
+\langle y^*-\tilde{y}^k,\tilde{\lambda}^k-\lambda^*-\beta(\tilde{y}^k-y^k)\rangle_{\mathcal{L}^2}\nonumber\\
&&+\langle\lambda^*-\tilde{\lambda}^k, (\tilde{x}^k-x^*)-(\tilde{y}^k-y^*)\rangle_{\mathcal{L}^2}\nonumber\\[+0.4em]
&=&\langle x^*-\tilde{x}^k-y^*+\tilde{y}^k,\beta(\tilde{y}^k-y^k)\rangle_{\mathcal{L}^2}\nonumber\\[+0.4em]
&=&\langle\tilde{y}^k-\tilde{x}^k,\beta(\tilde{y}^k-y^k)\rangle_{\mathcal{L}^2}\nonumber\\[+0.4em]
&=&\langle\tilde{\lambda}^k-\lambda^k,\tilde{y}^k-y^k\rangle_{\mathcal{L}^2}.
\end{eqnarray}
This proves (\ref{theorem1:2}).

Finally,  by \eqref{theorem1:1}, \eqref{theorem1:2} and Proposition \ref{propositionvaphid}, we have
\begin{eqnarray*}
&&\norm{\theta^{k+1}-\theta^*}_{\mathcal{L}^2,G}^2\nonumber\\
&\leq&\norm{\theta^k-\theta^*}_{\mathcal{L}^2,G}^2
+\alpha^2\norm{d(\theta^k,\tilde{\theta}^k,\zeta^k)}_{\mathcal{L}^2,G}^2
-2\alpha \varphi(\theta^k,\tilde{\theta}^k,\zeta^k)\nonumber\\
&\le&\norm{\theta^k-\theta^*}_{\mathcal{L}^2,G}^2+\alpha^2\norm{d(\theta^k,\tilde{\theta}^k,\zeta^k)}_{\mathcal{L}^2,G}^2
-\alpha||d(\theta^k,\tilde{\theta}^k,\zeta^k)||_{\mathcal{L}^2,G}^2\nonumber\\
&=&\norm{\theta^k-\theta^*}_{\mathcal{L}^2,G}^2-\alpha(1-\alpha)\norm{d(\theta^k,\tilde{\theta}^k,\zeta^k)}_{\mathcal{L}^2,G}^2.
\end{eqnarray*}
This completes the proof of Theorem \ref{theorem3.1}
\end{proof}

\begin{theorem}\label{theorem2}
Assume (A1)-(A2). Let $r > \frac{L_{F}}{\beta}+1$ and $\alpha\in (0,1)$.
Then, the sequence $\{\theta^k\}_{k=0}^{\infty}$ generated by Algorithm 3.1 converges weakly to some $\theta^*\in Sol~\vi{T}{\mathcal{K}}$.
\end{theorem}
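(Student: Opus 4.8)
The plan is to derive weak convergence from the quasi-Fej\'er (contractive) estimate of Theorem \ref{theorem3.1} via a standard Opial-type argument, the only nontrivial ingredient being that weak limits of the iterates solve $\vi{T}{\mathcal{K}}$. First I would note that \eqref{theorem1:0} makes $\{\norm{\theta^k-\theta^*}_{\mathcal{L}^2,G}\}$ nonincreasing for each fixed $\theta^*\in Sol~\vi{T}{\mathcal{K}}$; hence $\{\theta^k\}$ is bounded in the $G$-norm, which is equivalent to the $\mathcal{L}^2$-norm (and so induces the same weak topology) because the coefficients $\beta r,\beta,\frac1\beta$ in \eqref{normtheta} are positive constants. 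Summing \eqref{theorem1:0} over $k$ and using $\alpha(1-\alpha)>0$ gives $\sum_k\norm{d(\theta^k,\tilde\theta^k,\zeta^k)}_{\mathcal{L}^2,G}^2<+\infty$, whence $\norm{d(\theta^k,\tilde\theta^k,\zeta^k)}_{\mathcal{L}^2,G}\to0$.

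Next I would turn this into strong convergence of the prediction residual $\theta^k-\tilde\theta^k$. Reading off the blocks of $d$ from \eqref{theaktildexik} and \eqref{G}, the vanishing of $\norm{d}_{\mathcal{L}^2,G}$ forces $y^k-\tilde y^k\to0$, $\lambda^k-\tilde\lambda^k\to0$ and $x^k-\tilde x^k-\frac1{\beta r}\zeta_x^k\to0$ strongly. Substituting \eqref{SVIxixk} and using (A2) I get $\big(1-\tfrac1r-\tfrac{L_F}{\beta r}\big)\norm{x^k-\tilde x^k}_{\mathcal{L}^2}\le\norm{x^k-\tilde x^k-\frac1{\beta r}\zeta_x^k}_{\mathcal{L}^2}\to0$; since $r>1+\frac{L_F}{\beta}$ the coefficient is strictly positive, so $x^k-\tilde x^k\to0$ and then $\zeta_x^k\to0$ strongly. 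Thus $\theta^k-\tilde\theta^k\to0$ strongly, and also $\tilde x^k-\tilde y^k=\frac1\beta(\lambda^k-\tilde\lambda^k)\to0$.

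The heart of the argument is to show that every weak cluster point solves $\vi{T}{\mathcal{K}}$. As $T$ in \eqref{thetaT(theta)} is monotone and (Lipschitz) continuous, Lemma \ref{maxnormalmaximonotone} makes $\Psi:=T+N_{\mathcal{K}}$ maximal monotone, and $\theta$ solves $\vi{T}{\mathcal{K}}$ iff $0\in\Psi(\theta)$. From the projection characterizations in Steps 1.1--1.3 (the very inclusions used in the proof of Theorem \ref{theorem3.1}) I would exhibit $s^k\to0$ strongly with $s^k\in\Psi(\tilde\theta^k)$: the $\mathcal{C}$-projection yields $-(F(\tilde x^k)-\tilde\lambda^k)+s_x^k\in N_{\mathcal{C}}(\tilde x^k)$ with $s_x^k=\beta(y^k-\tilde y^k)-\zeta_x^k+\beta r(x^k-\tilde x^k)\to0$; the $\mathcal{N}$-projection gives $\tilde\lambda^k\in\mathcal{M}=N_{\mathcal{N}}(\tilde y^k)$, a zero residual; and the third block only requires $s_\lambda^k:=\tilde x^k-\tilde y^k\to0$. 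Extracting $\theta^{k_j}\rightharpoonup\theta^\infty$ by reflexivity, the strong convergence of $\theta^k-\tilde\theta^k$ gives $\tilde\theta^{k_j}\rightharpoonup\theta^\infty$ with $s^{k_j}\in\Psi(\tilde\theta^{k_j})$ and $s^{k_j}\to0$ strongly; the weak-strong closedness of the graph of the maximal monotone $\Psi$ (Lemma \ref{maximonotoweakstrongly}) then gives $0\in\Psi(\theta^\infty)$, i.e.\ $\theta^\infty\in Sol~\vi{T}{\mathcal{K}}$.

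Finally I would run the Opial argument. With $\theta^\infty\in Sol~\vi{T}{\mathcal{K}}$, Theorem \ref{theorem3.1} shows $\lim_k\norm{\theta^k-\theta^\infty}_{\mathcal{L}^2,G}$ exists for this solution as well, and for any two weak cluster points $\theta^\infty_1,\theta^\infty_2\in Sol~\vi{T}{\mathcal{K}}$ the identity $\norm{\theta^k-\theta^\infty_1}_{\mathcal{L}^2,G}^2-\norm{\theta^k-\theta^\infty_2}_{\mathcal{L}^2,G}^2=-2\langle\theta^k,\theta^\infty_1-\theta^\infty_2\rangle_{\mathcal{L}^2,G}+\norm{\theta^\infty_1}_{\mathcal{L}^2,G}^2-\norm{\theta^\infty_2}_{\mathcal{L}^2,G}^2$, evaluated along the two subsequences, forces $\norm{\theta^\infty_1-\theta^\infty_2}_{\mathcal{L}^2,G}=0$; hence the bounded sequence has a unique weak cluster point and converges weakly to it. The main obstacle is precisely the limit passage in the third paragraph: since $F$ is only Lipschitz and not weakly continuous, one cannot pass to the weak limit directly in the variational inequalities, and it is essential to recast the iteration as a perturbed inclusion for the maximal monotone $\Psi$ and invoke its demiclosedness. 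That the perturbation $s^k$ tends \emph{strongly} to zero rests on the strong vanishing of $\theta^k-\tilde\theta^k$, which in turn is exactly what the parameter condition $r>1+\frac{L_F}{\beta}$ secures.
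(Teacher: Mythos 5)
Your proof is correct and follows the same overall skeleton as the paper's: Fej\'er-type contraction from Theorem \ref{theorem3.1}, strong vanishing of the prediction residual $\theta^k-\tilde\theta^k$, identification of weak cluster points via maximal monotonicity of $T+N_{\mathcal{K}}$ (Lemma \ref{maxnormalmaximonotone}) together with the weak--strong closedness of its graph (Lemma \ref{maximonotoweakstrongly}), and the Opial identity for uniqueness of the cluster point. Your reverse-triangle estimate $\bigl(1-\tfrac1r-\tfrac{L_F}{\beta r}\bigr)\norm{x^k-\tilde x^k}_{\mathcal{L}^2}\le\norm{x^k-\tilde x^k-\tfrac1{\beta r}\zeta_x^k}_{\mathcal{L}^2}$ is the same inequality as \eqref{theorem2:9}, since $\tfrac1r+\tfrac{L_F}{\beta r}=\tfrac{L_F+\beta}{r\beta}$. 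The one genuine divergence is in how the perturbed inclusion is produced. The paper introduces the auxiliary iterate $\bar{\theta}^k=\Pi_{\mathcal{K},G}(\tilde{\theta}^k-G^{-1}T(\tilde{\theta}^k))$, devotes its Step 1 to the residual bound \eqref{theorem2:2} so that $\bar{\theta}^k-\tilde{\theta}^k\to 0$ strongly, and applies the demiclosedness at $\bar{\theta}^k$ via \eqref{theorem2:17}; you instead read the inclusion $s^k\in T(\tilde{\theta}^k)+N_{\mathcal{K}}(\tilde{\theta}^k)$ with $s^k\to 0$ strongly directly off the three projection characterizations of Steps 1.1--1.3 and apply the demiclosedness at $\tilde{\theta}^k$ itself. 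Your route is shorter---it renders the paper's Step 1 and the auxiliary point unnecessary---at the cost of the block computation $s^k_x=\beta r(x^k-\tilde{x}^k)-\zeta_x^k+\beta(y^k-\tilde{y}^k)$ and the membership $\tilde{\lambda}^k\in\mathcal{M}=N_{\mathcal{N}}(\tilde{y}^k)$, both of which you state correctly (the latter because $\tilde{x}^k-\tilde{y}^k-\tfrac1\beta\lambda^k=-\tfrac1\beta\tilde{\lambda}^k$ is the residual of a projection onto the subspace $\mathcal{N}$ and hence lies in $\mathcal{N}^{\perp}$). Both arguments use the same parameter condition $r>1+L_F/\beta$ in the same place, and your Opial step is identical to the paper's Step 4.
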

\begin{proof}
The proof is divided into four steps.

Step 1: In this step, we shall prove that there is a constant $K>0$ s.t.
\begin{equation}\label{theorem2:2}
\norm{e_{G}(\tilde{\theta}^k,T,\mathcal{K})}_{\mathcal{L}^2,G}\leq K\norm{\theta^k-\tilde{\theta}^k}_{\mathcal{L}^2,G},
\end{equation}
where
\begin{equation}\label{theorem2:1}
e_{G}(\tilde{\theta}^k,T,\mathcal{K})=\tilde{\theta}^k-
\Pi_{\mathcal{K},G}(\tilde{\theta}^k-G^{-1}T(\tilde{\theta}^k)).
\end{equation}

Define $\Lambda:\mathcal{L}^2(\Omega,\mathscr{F},R^n)\rightarrow \big[\mathcal{L}^2(\Omega,\mathscr{F}, R^n)\big]^{3}$ by
\begin{equation}\label{Lambda}
\Lambda(x)=\begin{pmatrix}
x\\
-x\\
0
\end{pmatrix}, \quad \forall \ x\in \mathcal{L}^2(\Omega,\mathscr{F}, R^n).
\end{equation}
Then, the Step 1 of Algorithm 3.1 can be rewritten as
\begin{equation}\label{theorem1:13}
\tilde{\theta}^{k}=\Pi_{\mathcal{K},G}\big\{\theta^{k}
-G^{-1}[T(\tilde{\theta}^{k})-\beta\Lambda(y^{k}-\tilde{y}^{k})+\zeta^k]\big\}.
\end{equation}

It follows from the Lipschitz continuity of $G$-metric projection  that
\begin{eqnarray}\label{theorem2:3+}
&&\norm{e_{G}(\tilde{\theta}^k,T,\mathcal{K})}_{\mathcal{L}^2,G}\nonumber\\[+0.4em]
&=&\norm{\Pi_{\mathcal{K},G}\big\{\theta^k-G^{-1}[T(\tilde{\theta}^k)
-\beta\Lambda(y^k-\tilde{y}^k)+\zeta^k]\big\}-\Pi_{\mathcal{K},G}(\tilde{\theta}^k
-G^{-1}T(\tilde{\theta}^k))}_{\mathcal{L}^2,G}\nonumber\\[+0.4em]
&\leq & \norm{\theta^{k}-\tilde{\theta}^k-G^{-1}[\beta\Lambda(\tilde{y}^k-y^k)+\zeta^k]}_{\mathcal{L}^2,G}\nonumber\\[+0.4em]
&\leq& \norm{\theta^k-\tilde{\theta}^k}_{\mathcal{L}^2,G}+ \norm{G^{-1}\beta\Lambda(\tilde{y}^k-y^k) }_{\mathcal{L}^2,G}
+\norm{G^{-1}\zeta^k}_{\mathcal{L}^2,G}.
\end{eqnarray}

By the definition of $\Lambda$,
\begin{eqnarray*}
\norm{G^{-1}\beta\Lambda(\tilde{y}^k-y^k) }_{\mathcal{L}^2,G}
&=&\Bigg\|G^{-1}\begin{pmatrix}
\beta(\tilde{y}^k-y^k)\\
-\beta(\tilde{y}^k-y^k)\\
0
\end{pmatrix}\Bigg\|_{\mathcal{L}^2,G}\\
&=&\Bigg\|\begin{pmatrix}
\frac{1}{r}(\tilde{y}^k-y^k)\\
-(\tilde{y}^k-y^k)\\
0
\end{pmatrix}\Bigg\|_{\mathcal{L}^2,G}\\
&=&\sqrt{\frac{1+r}{r}}\Big[\beta \|\tilde{y}^k-y^k\|_{\mathcal{L}^2}^{2}\Big]^{\frac{1}{2}}\\
&\le&\sqrt{\frac{1+r}{r}} \|\tilde{\theta}^k-\theta^k\|_{\mathcal{L}^2,G}.
\end{eqnarray*}
It follows from \eqref{theorem2:3+} that
\begin{eqnarray}\label{theorem2:3}
&&\norm{e_{G}(\tilde{\theta}^k,T,\mathcal{K})}_{\mathcal{L}^2,G}\nonumber\\[+0.4em]
&\leq& \norm{\theta^k-\tilde{\theta}^k}_{\mathcal{L}^2,G}+
\sqrt{\frac{1+r}{r}} \|\tilde{\theta}^k-\theta^k\|_{\mathcal{L}^2,G}
+\norm{G^{-1}\zeta^k}_{\mathcal{L}^2,G}.
\end{eqnarray}

In addition, by the definition of $\zeta^k$,
\begin{eqnarray}\label{theorem2:5}
\norm{G^{-1}\zeta^k}_{\mathcal{L}^2,G}^2
&=&\frac{1}{r\beta}\norm{\zeta_{x}^k}_{\mathcal{L}^2}^2\nonumber\\
&=&\frac{1}{r\beta}\norm{F(x^k)-F(\tilde{x}^k)+\beta(x^k-\tilde{x}^k)}_{\mathcal{L}^2}^2\nonumber\\
&\leq& \frac{1}{r\beta}(L_{F}+\beta)^2\norm{x^k-\tilde{x}^k}_{\mathcal{L}^2}^2\nonumber\\
&\leq&(\frac{L_{F}+\beta}{r\beta})^2\norm{\theta^k-\tilde{\theta}^k}_{\mathcal{L}^2,G}^2.
\end{eqnarray}

Combining (\ref{theorem2:3}) with (\ref{theorem2:5}), we obtain that
\begin{equation*}
\norm{e_{G}(\tilde{\theta}^k,T,\mathcal{K})}_{\mathcal{L}^2,G}\leq K\norm{\theta^k-\tilde{\theta}^k}_{\mathcal{L}^2,G},
\end{equation*}
where $$K=1+\sqrt{\frac{1+r}{r}} +\frac{L_{F}+\beta}{r\beta}.$$

Step 2: In this step, we shall prove that, for any $\theta^*\in Sol~\vi{T}{\mathcal{K}}$,
\begin{equation}\label{theorem2:7}
\norm{\theta^{k+1}-\theta^*}_{\mathcal{L}^2,G}^2\leq\norm{\theta^k-\theta^*}_{\mathcal{L}^2,G}^2
-(\alpha-\alpha^2)(1-\frac{L_{F}+\beta}{r\beta})^2\norm{\theta^k-\tilde{\theta}^k}_{\mathcal{L}^2,G}^2.
\end{equation}
By Theorem \ref{theorem3.1},
\begin{equation}\label{theorem2:8}
\norm{\theta^{k+1}-\theta^*}_{\mathcal{L}^2,G}^2\leq\norm{\theta^k-\theta^*}_{\mathcal{L}^2,G}^2
-(\alpha-\alpha^2)\norm{d(\theta^k,\tilde{\theta}^k,\zeta^k)}_{\mathcal{L}^2,G}^2.
\end{equation}
Since $r>\frac{L_F}{\beta}+1$, $\frac{L_{F}+\beta}{r\beta}\in(0,1)$. Then, by \eqref{theorem2:5}, we have
\begin{align}\label{theorem2:9}
\norm{d(\theta^k,\tilde{\theta}^k,\zeta^k)}_{\mathcal{L}^2,G}^2&=\norm{\theta^k-\tilde{\theta}^k-G^{-1}\zeta^k}_{\mathcal{L}^2,G}^2\nonumber\\
&=\norm{\theta^k-\tilde{\theta}^k}_{\mathcal{L}^2,G}^2-2\langle\theta^k-\tilde{\theta}^k,G^{-1}\zeta^k\rangle_{\mathcal{L}^2,G}
+\norm{G^{-1}\zeta^k}_{\mathcal{L}^2,G}^2\nonumber\\
&\geq\norm{\theta^k-\tilde{\theta}^k}_{\mathcal{L}^2,G}^2
-2\norm{\theta^k-\tilde{\theta}^k}_{\mathcal{L}^2,G}\norm{G^{-1}\zeta^k}_{\mathcal{L}^2,G}
+\norm{G^{-1}\zeta^k}_{\mathcal{L}^2,G}^2 \nonumber\\
&=\big(\norm{\theta^k-\tilde{\theta}^k}_{\mathcal{L}^2,G}
-\norm{G^{-1}\zeta^k}_{\mathcal{L}^2,G}\big)^2\nonumber\\
&\geq(1-\frac{L_{F}+\beta}{r\beta})^2\norm{\theta^k-\tilde{\theta}^k}_{\mathcal{L}^2,G}^2.
\end{align}
Substituting (\ref{theorem2:9}) into (\ref{theorem2:8}), we obtain (\ref{theorem2:7}).

Step 3: In this step, we prove that any weak cluster point of $\{\theta^k\}_{k=0}^{\infty}$ is a solution of \vi{T}{\mathcal{K}}.

By (\ref{theorem2:7}), $\{\theta^k\}_{k=0}^{\infty}$ is bounded. Moreover,
\begin{align*}
(\alpha-\alpha^2)(1-\frac{L_{F}+\beta}{r\beta})^2\sum\limits_{k=0}^{m}\norm{\theta^k-\tilde{\theta}^k}_{\mathcal{L}^2,G}^2
&\leq\norm{\theta^0-\theta^*}_{\mathcal{L}^2,G}^2-\norm{\theta^{m+1}-\theta^*}_{\mathcal{L}^2,G}^2\nonumber\\
&\leq\norm{\theta^0-\theta^*}_{\mathcal{L}^2,G}^2, \quad \forall\ m\in\mathbb{N}.
\end{align*}
Letting $m\to \infty$, we obtain that
\begin{equation}\label{theorem2:12}
(\alpha-\alpha^2)(1-\frac{L_{F}+\beta}{r\beta})^2\sum\limits_{k=0}^{\infty}\norm{\theta^k-\tilde{\theta}^k}_{\mathcal{L}^2,G}^2
\leq\norm{\theta^0-\theta^*}_{\mathcal{L}^2,G}^2.
\end{equation}
Therefore,
\begin{equation}\label{theorem2:13}
\lim\limits_{k\rightarrow\infty}\norm{\theta^k-\tilde{\theta}^k}_{\mathcal{L}^2,G}^2=0.
\end{equation}

Define
\begin{equation*}
\bar{\theta}^k=\Pi_{\mathcal{K},G}(\tilde{\theta}^k-G^{-1}T(\tilde{\theta}^k)).
\end{equation*}
Clearly,
\begin{align*}
\bar{\theta}^k&=\Pi_{\mathcal{K},G}\big\{\bar{\theta}^k
-G^{-1}[T(\tilde{\theta}^k)+G(\bar{\theta}^k-\tilde{\theta}^k)]\big\}\nonumber\\
&=\Pi_{\mathcal{K}}\big\{\bar{\theta}^k-[T(\tilde{\theta}^k)+G(\bar{\theta}^k-\tilde{\theta}^k)]\big\}.
\end{align*}
Letting $N_{\mathcal{K}}(\bar{\theta}^k)$ be the normal cone to $\mathcal{K}$ at $\bar{\theta}^k\in\mathcal{K}$, it follows that
\begin{equation}\label{theorem2:16}
-T(\tilde{\theta}^k)-G(\bar{\theta}^k-\tilde{\theta}^k)\in N_{\mathcal{K}}(\bar{\theta}^k).
\end{equation}
Adding $T(\bar{\theta}^k)$ to both sides of (\ref{theorem2:16}), we have that
\begin{equation}\label{theorem2:17}
T(\bar{\theta}^k)-T(\tilde{\theta}^k)-G(\bar{\theta}^k-\tilde{\theta}^k)\in N_{\mathcal{K}}(\bar{\theta}^k)+T(\bar{\theta}^k),
\end{equation}
i.e.,
\begin{equation}\label{theorem2:18}
(\bar{\theta}^k,T(\bar{\theta}^k)-T(\tilde{\theta}^k)
-G(\bar{\theta}^k-\tilde{\theta}^k))\in Gph (N_{\mathcal{K}}(\cdot)+T(\cdot)).
\end{equation}
By \eqref{theorem2:2} and \eqref{theorem2:13},
\begin{equation}\label{theorem2:20}
\lim\limits_{k\rightarrow\infty}\norm{\tilde{\theta}^k-\bar{\theta}^k}_{\mathcal{L}^2,G}
=\lim\limits_{k\rightarrow\infty}\norm{e_{G}(\tilde{\theta}^k,T,\mathcal{K})}_{\mathcal{L}^2,G} \leq \lim\limits_{k\rightarrow\infty} K\norm{\theta^k-\tilde{\theta}^k}_{\mathcal{L}^2,G}=0.
\end{equation}
By condition (A2), $T$ is Lipschitz continuous. Then we have
$$\lim\limits_{k\rightarrow \infty}\norm{T(\bar{\theta}^{k})-T(\tilde{\theta}^{k})
-G(\bar{\theta}^{k}-\tilde{\theta}^{k})}_{\mathcal{L}^2,G}=0.$$

Since $\{\theta^k\}_{k=0}^{\infty}$ is bounded, it possesses a weakly convergent subsequence $\{\theta^{k_{j}}\}_{j=0}^{\infty}$. Assume $\theta^{k_{j}}\rightharpoonup\theta^\infty.$ By \eqref{theorem2:13} and  \eqref{theorem2:20}, we have $\tilde{\theta}^{k_{j}}\rightharpoonup\theta^\infty$ and $\bar{\theta}^{k_{j}}\rightharpoonup\theta^\infty$.
Under condition (A1)-(A2), $T$ is Lipschitz continuous and monotone. By Lemma \ref{maxnormalmaximonotone}, $N_{\mathcal{K}}(\cdot)+T(\cdot)$ is maximal monotone.
Then, by \eqref{theorem2:17} and Lemma \ref{maximonotoweakstrongly}, we have
\begin{equation}\label{theorem2:23}
0\in T(\theta^\infty)+ N_{\mathcal{K}}(\theta^\infty),
\end{equation}
i.e., $\theta^{\infty}\in Sol~\vi{T}{\mathcal{K}}$.

Step 4: In this step, we prove that $\{\theta^k\}$ has only one weak cluster point.

Assume $\{\theta^{k_{i}}\}_{i=0}^{\infty}\subset \{\theta^k\}_{k=0}^{\infty}$, $\{\theta^{k_{j}}\}_{j=0}^{\infty}\subset \{\theta^k\}_{k=0}^{\infty}$, $\theta^{k_{i}}\rightharpoonup\theta_{1}^{\infty}$ as $i\to \infty$, and $\theta^{k_{j}}\rightharpoonup\theta_{2}^{\infty}$ as $j\to \infty$. By step 3, $\theta_{1}^{\infty},\theta_{2}^{\infty}\in Sol$\vi{T}{\mathcal{K}}.
Then, replacing $\theta^*$ in \eqref{theorem2:7} by $\theta_{1}^{\infty}$ and $\theta_{2}^{\infty}$  respectively, we have
$\{\norm{\theta^k-\theta_{1}^{\infty}}_{\mathcal{L}^2, G}^2\}$ and $\{\norm{\theta^k-\theta_{2}^{\infty}}_{\mathcal{L}^2, G}^2\}$ are decreasing and bounded.
Assume $\norm{\theta^{k}-\theta_{1}^{\infty}}_{\mathcal{L}^2,G}^2\rightarrow l_{1}$  and$\norm{\theta^{k}-\theta_{2}^{\infty}}_{\mathcal{L}^2,G}^2\rightarrow l_{2}$. Then,
\begin{align}\label{theorem2:25}
&\quad~\norm{\theta^k-\theta_{1}^{\infty}}_{\mathcal{L}^2,G}^2-\norm{\theta^k-\theta_{2}^{\infty}}_{\mathcal{L}^2,G}^2\nonumber\\
&=\norm{\theta^k-\theta_{2}^{\infty}+\theta_{2}^{\infty}-\theta_{1}^{\infty}}_{\mathcal{L}^2,G}^2
-\norm{\theta^k-\theta_{2}^{\infty}}_{\mathcal{L}^2,G}^2\nonumber\\
&=\norm{\theta_{2}^{\infty}-\theta_{1}^{\infty}}_{\mathcal{L}^2,G}^2
+2\langle\theta^{k}-\theta_{2}^{\infty}, \theta_{2}^{\infty}-\theta_{1}^{\infty}\rangle_{\mathcal{L}^2,G}.
\end{align}

By $\theta^{k_{i}}\rightharpoonup\theta_{1}^{\infty}$,
\begin{align}\label{theorem2:26}
l_{1}-l_{2}&=\lim\limits_{i\rightarrow\infty}\big[\norm{\theta^{k_{i}}-\theta_{1}^{\infty}}_{\mathcal{L}^2,G}^2
-\norm{\theta^{k_{i}}-\theta_{2}^{\infty}}_{\mathcal{L}^2,G}^2\big]\nonumber\\
&=\lim\limits_{i\rightarrow\infty}\big[\norm{\theta_{2}^{\infty}-\theta_{1}^{\infty}}_{\mathcal{L}^2,G}^2
+2\langle\theta^{k_{i}}-\theta_{2}^{\infty}, \theta_{2}^{\infty}-\theta_{1}^{\infty}\rangle_{\mathcal{L}^2,G}\big]\nonumber\\
&=\norm{\theta_{2}^{\infty}-\theta_{1}^{\infty}}_{\mathcal{L}^2,G}^2
-2\norm{\theta_{2}^{\infty}-\theta_{1}^{\infty}}_{\mathcal{L}^2,G}^2
\nonumber\\
&=-\norm{\theta_{2}^{\infty}-\theta_{1}^{\infty}}_{\mathcal{L}^2,G}^2.
\end{align}

On the other hand, by $\theta^{k_{j}}\rightharpoonup\theta_{2}^{\infty}$,
\begin{align}\label{theorem2:27}
l_{1}-l_{2}&=\lim\limits_{j\rightarrow\infty}\big[\norm{\theta^{k_{j}}-\theta_{1}^{\infty}}_{\mathcal{L}^2,G}^2
-\norm{\theta^{k_{j}}-\theta_{2}^{\infty}}_{\mathcal{L}^2,G}^2\big]\nonumber\\
&=\lim\limits_{j\rightarrow\infty}\big[\norm{\theta_{2}^{\infty}-\theta_{1}^{\infty}}_{\mathcal{L}^2,G}^2
+2\langle\theta^{k_{j}}-\theta_{2}^{\infty}, \theta_{2}^{\infty}-\theta_{1}^{\infty}\rangle_{\mathcal{L}^2,G}\big]\nonumber\\
&=\norm{\theta_{2}^{\infty}-\theta_{1}^{\infty}}_{\mathcal{L}^2,G}^2.
\end{align}
By \eqref{theorem2:26} and \eqref{theorem2:27}, $\norm{\theta_{2}^{\infty}-\theta_{1}^{\infty}}_{\mathcal{L}^2,G}^2=0$. This proves the uniqueness of the weak cluster.

This completes the proof of Theorem \ref{theorem2}.
\end{proof}

\begin{theorem}\label{theorem3}
Under the conditions in Theorem \ref{theorem2}, the sequence $\{x^{k}\}_{k=0}^{\infty}$ converges weakly to a solution $x^*$ to $\mbox{MSVI}(F,\mathcal{C}\cap\mathcal{N} )$ (\ref{SVIbasicdefinition}).
\end{theorem}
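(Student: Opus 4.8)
The plan is to obtain this theorem as an almost immediate consequence of Theorem \ref{theorem2}, which has already established weak convergence of the \emph{full} iterate sequence $\{\theta^k\}=\{(x^k,y^k,\lambda^k)^{\top}\}$ in $\big[\mathcal{L}^2(\Omega,\mathscr{F}, R^n)\big]^3$ to some $\theta^*=(x^*,y^*,\lambda^*)^{\top}\in Sol~\vi{T}{\mathcal{K}}$. Two things remain to be checked: (i) weak convergence of $\theta^k$ in the product space forces weak convergence of the first component $x^k$ to $x^*$ in $\mathcal{L}^2(\Omega,\mathscr{F}, R^n)$; and (ii) this limit $x^*$ is in fact a solution of $\mbox{MSVI}(F,\mathcal{C}\cap\mathcal{N})$ \eqref{SVIbasicdefinition}.

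First I would clarify what the weak-convergence statement of Theorem \ref{theorem2} means. It is phrased with respect to the $G$-norm, but since $G$ defined by \eqref{G} is a bounded, self-adjoint, boundedly invertible positive operator (a diagonal scaling by the positive constants $\beta r$, $\beta$, and $\frac{1}{\beta}$), the $G$-inner product satisfies $\inner{\theta}{\vartheta}_{\mathcal{L}^2,G}=\inner{\theta}{G\vartheta}_{\mathcal{L}^2}$ with $G$ surjective. As $\vartheta$ ranges over the whole product space, so does $G\vartheta$; hence $\theta^k\rightharpoonup\theta^*$ in the $G$-inner product is equivalent to $\theta^k\rightharpoonup\theta^*$ in the standard $\mathcal{L}^2$ inner product on $\big[\mathcal{L}^2(\Omega,\mathscr{F}, R^n)\big]^3$.

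Next, to extract the first component, I would test against vectors of the form $\vartheta=(\phi,0,0)^{\top}$ with $\phi\in\mathcal{L}^2(\Omega,\mathscr{F}, R^n)$ arbitrary. Then $\inner{\theta^k}{\vartheta}_{\mathcal{L}^2}=\inner{x^k}{\phi}_{\mathcal{L}^2}\to\inner{x^*}{\phi}_{\mathcal{L}^2}$, so that $x^k\rightharpoonup x^*$ in $\mathcal{L}^2(\Omega,\mathscr{F}, R^n)$. The identification of the limit then follows directly from Lemma \ref{lemma 3.1} ii): since $\theta^*\in Sol~\vi{T}{\mathcal{K}}$, we have $x^*=y^*$ and $x^*$ solves $\mbox{MSVI}(F,\mathcal{C}\cap\mathcal{N})$ \eqref{SVIbasicdefinition}. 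Combining, $x^k\rightharpoonup x^*$ with $x^*$ a solution, which is exactly the assertion.

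Because essentially all of the analytic work has already been carried out in Theorem \ref{theorem2} and Lemma \ref{lemma 3.1}, I do not expect any substantial obstacle here. The only point deserving a careful line is the equivalence of weak convergence in the $G$-weighted and in the standard inner products, i.e.\ the remark that weighting by the positive-definite operator $G$ does not alter the weak topology of the Hilbert space; once this is noted, projecting onto the first coordinate and invoking Lemma \ref{lemma 3.1} finishes the proof.
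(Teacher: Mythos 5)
Your proposal is correct and follows exactly the route the paper takes: the paper's proof is the one-line statement that the result ``follows directly from Theorem \ref{theorem2} and Lemma \ref{lemma 3.1}.'' Your additional remarks---that the $G$-weighted and standard inner products induce the same weak topology, and that weak convergence of $\theta^k$ passes to its first component---are just the routine details the paper leaves implicit.
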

\begin{proof}
It follows directly from Theorem \ref{theorem2} and Lemma \ref{lemma 3.1}.
\end{proof}

\section{The discrete cases and numerical examples}\label{sec4}

In this section, we shall consider the special case that the sample space is a finite set.  On such discrete sample space, all the random vectors are discrete type random vectors, and in such case, as discussed  in \cite{Rockafellar Wets17}, the multistage stochastic variational inequality is actually defined on a finite dimensional Hilbert space.

Let $m\in \mathbb{N}$. Consider the sample space
$$\hat\O:=\big\{\hat\o_1,\hat\o_2,...,\hat\o_m\big\}.$$
Define $\hat{\mathscr{F}}=2^{\hat\O}$, i.e., $\hat{\mathscr{F}}$ is the $\sigma$-field composed by all the subsets of $\hat \O$. Then, any map $\hat\O\to \mrn$ are $\hat{\mathscr{F}}$-measurable. Given $\{p_{i}\}_{i=1}^{m}$, $p_{i}>0$  for any $i=1,2,...,m$ and $\sum_{i=1}^{m}p_i=1$. Let $\hat P:\hat{\mathscr{F}}\to [0,1]$ be defined by $\hat P(\emptyset)=0$, $\hat P(\{\hat\o_i\})=p_i$, $i=1,2,...,m$ and
$$\hat P(A)=\sum_{\hat\o_i\in \hat A} p_i,\quad \forall \ A\in \hat{\mathscr{F}}.$$
 Then, $(\hat \O,\hat{\mathscr{F}}, \hat P)$ is a (discrete) probability space.

Let $\hat\xi:\hat\O\to \mrn$ be a random vector belonging to $\mathcal{L}^2(\hat \O,\hat{\mathscr{F}},\mrn)$. The $\hat\xi$ can be treated as a finite dimensional vector in the product Euclidean space $(R^{n})^m$ defined by
$\hat\xi=(\hat\xi_{1}^{\top},\hat\xi_{2}^{\top},...,\hat\xi_{m}^{\top})^{\top}$
with $\hat\xi_{i}=\hat\xi(\hat\o_i)$, $i=1,2,...,m$. The norm of $\xi$ is
\begin{eqnarray}
\|\hat\xi\|_{\mathcal{L}^2}&=&\left[\sum_{i=1}^{m}|\hat\xi_{i}|^2p_{i}\right]^{\frac{1}{2}} \nonumber\\
&=&\left[(\hat\xi_{1}^{\top},\hat\xi_{2}^{\top},...,\hat\xi_{m}^{\top})
\mathcal{P}(\hat\xi_{1}^{\top},\hat\xi_{2}^{\top},...,\hat\xi_{m}^{\top})^{\top}\right]^{\frac{1}{2}}, \end{eqnarray}
where
\begin{equation*}
\mathcal{P}:=
\begin{bmatrix}
p_1I&{}&{} \\
{}&\ddots&{}\\
{}&{}&p_{m}I
\end{bmatrix}
\end{equation*}
is a positive definite matrix in $R^{mn\times mn}$. Here $I\in R^{n\times n}$ is the $n\times n$-identity matrix.
In addition, for $\hat\xi,\hat\eta\in\mathcal{L}^2(\hat \O,\hat{\mathscr{F}},\mrn)$, the inner product of $\hat\xi$ and $\hat\eta$ is
$$\inner{\hat\xi}{\hat\eta}_{\mathcal{L}^2}=(\hat\xi_{1}^{\top},\hat\xi_{2}^{\top},...,\hat\xi_{m}^{\top})
\mathcal{P}(\hat\eta_{1}^{\top},\hat\eta_{2}^{\top},...,\hat\eta_{m}^{\top})^{\top}. $$
Therefore, $\mathcal{L}^2(\hat \O,\hat{\mathscr{F}},\mrn)$ is isomorphic to $(R^{n })^m$ (with norm $\|x\|_{\mathcal{P}}=x^{\top}\mathcal{P}x$ and inner product $ \inner{x}{y}_{\mathcal{P}}=x^{\top}\mathcal{P}y$ for any $x,y\in (R^{n })^m$). Similarly, in the discrete case, the space $\big[\mathcal{L}^2(\hat \O,\hat{\mathscr{F}},\mrn)\big]^3$ with norm $\|\cdot\|_{\mathcal{L}^2, G}$ and inner product$\inner{\cdot}{\cdot}_{\mathcal{L}^2, G}$ (See (\ref{normtheta}) and (\ref{innerproductthetav}) for their definitions) is also isomorphic  to a finite dimensional Hilbert space.

Since any bounded sequence in finite dimensional space is sequential compact,  when the sample space is a  finite set, the convergence of Algorithm 3.1 follows directly from Theorem \ref{theorem2} and \ref{theorem3}.

\begin{theorem}
Suppose the conditions in Theorem \ref{theorem2} hold true. Then, in the case that the sample space is a  finite set,
the sequence $\{\theta^k\}$ generated by Algorithm 3.1 converges to some $\theta^*\in Sol~\vi{T}{\mathcal{K}}$. Consequently, the  sequence $\{x^{k}\}_{k=0}^{\infty}$ converges to a solution $x^*$ to $\mbox{MSVI}(F,\mathcal{C}\cap\mathcal{N} )$ (\ref{SVIbasicdefinition}).
\end{theorem}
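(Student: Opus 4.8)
The plan is to treat this statement as a corollary of the weak-convergence results already established in Theorem \ref{theorem2} and Theorem \ref{theorem3}, upgrading weak convergence to strong (norm) convergence by exploiting finite-dimensionality. First I would observe that the hypotheses of Theorem \ref{theorem2} are assumed, so the sequence $\{\theta^k\}$ generated by Algorithm 3.1 converges weakly to some $\theta^*\in Sol~\vi{T}{\mathcal{K}}$, and by Theorem \ref{theorem3} the sequence $\{x^k\}$ converges weakly to a solution $x^*$ of $\mbox{MSVI}(F,\mathcal{C}\cap\mathcal{N})$. These two facts carry over verbatim to the discrete setting, since assumptions (A1)--(A2) and the parameter choices $r>\frac{L_F}{\beta}+1$, $\alpha\in(0,1)$ are unchanged.

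Second, I would invoke the isomorphism discussed at the beginning of this section: when $\hat\O$ is a finite set, $\mathcal{L}^2(\hat\O,\hat{\mathscr{F}},\mrn)$ is isomorphic to the finite-dimensional Euclidean space $(R^n)^m$, so $\big[\mathcal{L}^2(\hat\O,\hat{\mathscr{F}},\mrn)\big]^3$ equipped with the $G$-norm $\norm{\cdot}_{\mathcal{L}^2,G}$ is a finite-dimensional Hilbert space. In any finite-dimensional normed space the weak topology and the norm topology coincide, hence weak convergence is equivalent to strong convergence. Applying this to $\{\theta^k\}$ yields $\theta^k\to\theta^*$ in the $G$-norm; reading off the first coordinate (or applying the same principle directly to $\{x^k\}$) then yields $x^k\to x^*$ in $\norm{\cdot}_{\mathcal{L}^2}$. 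Equivalently, one may phrase this via sequential compactness: the contractive estimate \eqref{theorem2:7} shows $\{\theta^k\}$ is bounded, so in finite dimensions it is sequentially compact, and Step 4 of Theorem \ref{theorem2} already guarantees a unique weak (hence unique norm) cluster point, forcing convergence of the whole sequence.

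The only point deserving a word is the identification of $x^*$ as a genuine solution of $\mbox{MSVI}(F,\mathcal{C}\cap\mathcal{N})$, but this requires no new work: it is exactly the content of Lemma \ref{lemma 3.1} ii) combined with Theorem \ref{theorem3}. I do not expect any genuine obstacle in this theorem; all the analytical effort has already been spent in Theorem \ref{theorem2}, and the statement simply records that in the discrete regime the abstract weak convergence automatically sharpens to the strong convergence one wants for numerical computation.
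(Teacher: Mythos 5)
Your proposal is correct and follows exactly the paper's route: the paper simply notes that since bounded sequences in finite-dimensional spaces are sequentially compact (equivalently, weak and norm convergence coincide there), the discrete-case convergence follows directly from Theorems \ref{theorem2} and \ref{theorem3}. Your elaboration via the isomorphism of $\mathcal{L}^2(\hat\O,\hat{\mathscr{F}},R^n)$ with $(R^n)^m$ and the uniqueness of the weak cluster point is just a more explicit writing-out of the same argument.
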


In the following we  give some test examples.

\begin{example}\label{example1}
Let $m\in \mathbb{N}$, $\hat\O:=\big\{\hat\o_1,\hat\o_2,...,\hat\o_m\big\}$. Generate randomly a discrete probability distribution $\hat P(\{\hat\o_i\})=p_i$, $i=1,2,...,m$ and numbers $\hat \xi_{i}\in R$, $i=1,2,...,m$. Let random variable  $\hat\xi:\hat \O\to R$ be defined by $\hat\xi(\hat\o_{i})=\hat \xi_{i}$, $i=1,2,...,m$ and   assume
$$\hat P(\hat\xi=\hat \xi_{i})=\hat P(\{\hat\o_{i}\})=p_{i}, i=1,2,...,m.$$
Let $\hat{\mathscr{F}}=2^{\hat\O}$, $(\hat \O,\hat{\mathscr{F}}, \hat P)$ and $\mathcal{L}^2(\hat \O,\hat{\mathscr{F}},\mrn)$ be defined as above.
Let $n_0,n_1\in \mathbb{N}$, $n_0+n_1=n$. For any $x\in \mathcal{L}^2(\hat \O,\hat{\mathscr{F}},\mrn)$, let $x(\hat \o_i)=(x_{0}(\hat \o_i), x_{1}(\hat \o_i))$, where $x_{0}(\hat \o_{i})\in R^{n_0}$, $x_{1}(\hat  \o_{i})\in R^{n_1}$, $i=1,2,...,m$. Let $\hat{\mathscr{F}}_{0}=\{\emptyset,\hat\O\}$, $\hat{\mathscr{F}}_{1}=\sigma(\hat{\xi})$. From the definition of $\hat\xi$, $\hat{\mathscr{F}}_{1}=2^{\hat\O}$ and
$$\mathcal{N}=\Big\{x:\hat\Omega\to R^n\ \Big|\ x(\hat\o_{i})=(x_{0}, x_1(\hat \xi(\hat\o_i))), i=1,2,...,m   \Big\},$$
i.e., the first $n_0$ components of $x\in \mathcal{N}$ is deterministic and equals to some $x_0\in R^{n_1}$, the last $n_1$ components of $x$ is a function of $\hat\xi$. In addition,
$$\mathcal{M}=\mathcal{N}^{\bot}=\Big\{y:\hat\Omega\to R^n\ \Big|\ y(\hat\o_{i})=(y_{0}(\hat\o_{i}),0), i=1,2,...,m,\dbE y_0= \sum_{i=1}^{m}y_{0}(\hat\o_{i})p_i=0  \Big\},$$
i.e., the first $n_0$ components of $y\in \mathcal{M}$ is a random vector taking values in $R^{n_0}$ with expectation 0, the last $n_1$ components of $y$ is identically  0.

Let $C(\hat\o_{i})\equiv[-1,1]^{n}$. Then,
$$\mathcal{C}=\big\{x:\hat\Omega\to R^n  \ \big|\ x(\hat\o_{i})\in [-1,1]^n,i=1,2,...,m\big\}.$$

Generate randomly nonzero positive semidefinite  matrices $M_{i}\in R^{n\times n}$ and vectors $b_{i}\in R^{n}$, $i=1,2,...,m$, and assume $M(\hat\xi_{i})=M_i$ and $b(\hat\xi_{i})=b_i$. Define $\hat F:\mathcal{L}^2(\hat \O,\hat{\mathscr{F}},\mrn)\to \mathcal{L}^2(\hat \O,\hat{\mathscr{F}},\mrn)$ by $$\hat F(x)=M(\hat\xi)x+b(\hat\xi),\quad \forall\ x\in \mathcal{L}^2(\hat \O,\hat{\mathscr{F}},\mrn).$$
It is easy to check that $\hat F$ is monotone and Lipschitz continuous.

Now we solve the $\mbox{MSVI}(\hat F,\mathcal{C}\cap\mathcal{N} )$   numerically by Algorithm 3.1. Choose arbitrarily
$$\theta^0=(x^0,y^0,\lambda^0)^{\top}\in \mathcal{C}\times\mathcal{N}\times\mathcal{L}^2(\hat \O,\hat{\mathscr{F}},\mrn).$$
Let $\{\theta^k\}_{k=0}^{\infty}=\{(x^k,y^k,\lambda^k)^{\top}\}_{k=0}^{\infty}$ be the sequence generated by Algorithm 3.1. By Lemma \ref{lemma 3.1}, $x^k$ is a solution to $\mbox{MSVI}(\hat F,\mathcal{C}\cap\mathcal{N} )$  if and only if $x^k=y^k$, $\lambda^k\in \mathcal{M}$ and
$$-\hat F(x^k)(\hat{\o}_i)+\lambda^k(\hat{\o}_i)\in N_{C(\hat\o_{i})}(x^k(\hat{\o}_i)),\quad   i=1,2,...,m.$$
That leads us to define the stopping criterion
\begin{align*}
Err(x^{k}):=\max_{i} |x^{k}(\hat\omega_{i})\!-\!\Pi_{C(\hat\o_{i})}(x^{k}(\hat\omega_{i})\!-\!\hat F(x^{k})(\hat\omega_{i})\!+\!\lambda^{k}(\hat\omega_{i}))|
\!+\!\sum_{i=1}^{m}| x^{k}(\hat\omega_{i})\!-\!y^{k}(\hat\omega_{i})|^2p_{i}<\varepsilon
\end{align*}
for some sufficiently small $\varepsilon>0$.

In the implementation of Algorithm 3.1, we choose $\alpha=0.61$, $\beta=1.1L_{\hat F}$ with
$$L_{\hat F}=\max \{\sigma_{i}\ |\ i=1,2,...,m \},$$
where $\sigma_{i}$ is the largest eigenvalue of $M_i$,$i=1,2,...,m$,
and, $r=1.1+(L_{\hat F}/\beta)$.

In Table \ref{tab test}, we report the numerical performance for PHA and Algorithm 3.1(Alg.3.1). We report the average number of iterations (Avg-iter) and the average running time in seconds (Avg-time(s)). It can be found from Table \ref{tab test} that the average running time of Algorithm 3.1 is shorter than that of PHA even though  the average number of iterations of Algorithm 3.1 is more than PHA.

\end{example}

\begin{table}[h]
\begin{center}
\begin{minipage}{\textwidth}
\caption{Numerical results for Example \ref{example1}}\label{tab test}
\begin{tabular*}{\textwidth}{@{\extracolsep{\fill}}lcccccc@{\extracolsep{\fill}}}
\toprule
$Err=10^{-3} $
&\multicolumn{2}{@{}c@{}}{$m=10, n_1=n_2=5 $}
&\multicolumn{2}{@{}c@{}}{$m=20, n_1=n_2=30$} \\
\cmidrule{2-3}\cmidrule{4-5}%
 & Avg-time(s) & Avg-iter  &    Avg-time(s) & Avg-iter  \\
\midrule
PHA  & 1.7628  & 28 & 35.0378 & 153\\
Alg. 3.1 & 0.2028  & 134  & 14.1337  & 2180 \\
\toprule
\end{tabular*}
\begin{tabular*}{\textwidth}{@{\extracolsep{\fill}}lcccccc@{\extracolsep{\fill}}}
$Err=10^{-5}$
& \multicolumn{2}{@{}c@{}}{$m=10, n_1=n_2=5 $ } & \multicolumn{2}{@{}c@{}}{$m=20, n_1=n_2=30$  } \\ \cmidrule{2-3}\cmidrule{4-5}%
 & Avg-time(s) & Avg-iter  &    Avg-time(s) & Avg-iter  \\
\midrule
PHA  & 34.0394  & 363 & 486.331 & 2001\\
Alg. 3.1 & 0.8424  & 322 & 24.6326  & 4086 \\
\toprule
\end{tabular*}
\end{minipage}
\end{center}
\end{table}

\begin{example}\label{example2}
Let $(\Omega, \mathscr{F},  \mathbb{F}, P)$  be a complete filtered probability space with the filtration $\mathbb{F}=\{\mathscr{F}_{t} \}_{0\le t\le 1}$, on which a one-dimensional standard  Wiener
process $W(\cdot)$ is defined such that $\mathbb{F}$ is the natural filtration generated by $W(\cdot)$ (augmented by all the P-null sets).

Let $n=m=1$,  $U=[0,1]$ and
$\eta\in \mathcal{L}^2(\Omega,\mathscr{F}_{1}, R)$.
Consider the discrete control system
\begin{equation}\label{eq dis_ex_W}
\left\{
\begin{aligned}
&x_{i+1}=x_{i}+[x_{i}-u_i]\Delta+ u_i\Delta W_i,\ i=0,1,...,N-1,\\[-0.2em]
&x_0=1
\end{aligned}
\right.
\end{equation}
and cost function
\begin{equation}\label{eq dis_cost}
J^N(\mathscr{U}^{N})=\frac{1}{2}\dbE|x_{N}-\eta|^2.
\end{equation}
Here,  $\mathscr{U}^{N}=(u_0,u_1,...,u_{N-1}):\O\to R^{N}$, $u_i(\o)\in U$, \mbox{a.s.} $\o\in\O$, $\mathscr{X}^{N}=(x_0,x_1,...,x_N):\O\to R^{N+1}$, $\Delta=1/N$, $\Delta W_{i}=W((i+1)/N)-W(i/N)$, $i=0,1,...,N-1$ are independent identically distributed Gaussian random variables with mean $0$ and variance $1/N$.

Clearly, the discrete-times stochastic optimal control problem \eqref{disc SOCP} in Example \ref{ex SOCP} with control system \eqref{eq dis_ex_W} and cost function \eqref{eq dis_cost} is a discretized approximation of the continuous-time stochastic optimal control problem \eqref{SOCP} with control system
$$
\left\{
\begin{aligned}
&dx(t)=(x(t)-u(t))dt+ u(t)dW(t),\ t\in[0,1],\\[-0.2em]
&x_0=1
\end{aligned}
\right.
$$
and cost function
$$
J(u)=\frac{1}{2}\dbE|x(1)-\eta|^2.
$$

It follows from Example \ref{ex SOCP} that, the first order necessary condition for the discrete-times optimal control problem \eqref{disc SOCP} is a multistage stochastic variational inequality defined on a general probability space. In order to obtain an approximation of that multistage stochastic variational inequality in a discrete sample space, we use a sequence of random walks to approximate the Wiener process.

Let us consider a sequence $\{\xi_{i}\}_{i=1}^{N\ell}$ ($\ell\in \mathbb{N}$) of independent  identically distributed random variables such that for each $i$, $P(\xi_{i}=-1)=P(\xi_{i}=1)=0.5$, and, we define
$$S_{0}=0,\  S_{i}=\sum_{k=1}^{i}\xi_k,\quad i=1,2,...,N\ell$$
and
$$Y_{0}=0,\  Y_{i}^{\ell}=\frac{1}{\sqrt{N\ell}}S_{i\ell},\quad i=1,2,...,N.$$
By \cite[Theorem 4.17, Chapter 2]{Karatzas1991}, $(Y_{1}^{\ell},Y_{2}^{\ell},...,Y_{N}^{\ell})$ converges to $(W(1/N),W(2/N),...,W(1))$ in law as $\ell\to \infty$. Define
\begin{equation}\label{sample space ex2}
\hat \O=  \underbrace{\{-1,1\}\times\{-1,1\}\times\cdots\times\{-1,1\}}_{N\ell},
\end{equation}
and let $\hat{\mathscr{F}}$ be all the subsets of $\hat\O$ and $\hat P$ be the probability measure induced by the binomial distribution. Then, $(\hat \O, \hat{\mathscr{F}},  \hat P)$ is a discrete probability space.  Denote $\Delta Y_{i}^{\ell}:=(S_{(i+1)\ell}-S_{i\ell})/\sqrt{N\ell}$, $i=0,1,...,N-1$ and let $\hat{\mathscr{F}}_0=\{\emptyset, \O\}$, $\hat{\mathscr{F}}_{i}=\sigma(\Delta Y_0^{\ell},\Delta Y_1^{\ell},...,\Delta Y_{i-1}^{\ell} ) ,\   i=1,2,...,N-1$. We define
$$
\mathcal{N}:=\Big\{\hat {\mathscr{U}}^{N}=(\hat u_{0},\hat u_{1},...,\hat u_{N-1})\in \mathcal{L}^2(\hat\Omega,\hat{\mathscr{F}}, R^{N}) \ \big|\ \hat u_{i}\in \mathcal{L}^0(\hat\Omega, \hat{\mathscr{F}}_{i}, R), i=0,1,...,N-1\Big\}
$$
and
\begin{equation*}
\mathcal{C}:=\Big\{\hat {\mathscr{U}}^{N}=(\hat u_{0}, \hat u_{1},...,\hat u_{N-1})\in \mathcal{L}^2(\Omega,\hat{\mathscr{F}}, R^{N})\ \Big|\ \hat u_{i}(\omega)\in U \ \mbox{a.s. } \o\in\O,\ i= 0,1,...,N-1 \Big\}.
\end{equation*}

Replacing $\Delta W_{i}$ by $\Delta Y_{i}^{\ell}$, $i=0,1,...,N-1$ in \eqref{eq dis_ex_W}, we obtain the following stochastic approximation difference equation
\begin{equation}\label{eq dis_ex_randwalk}
\left\{
\begin{aligned}
&\hat{x}_{i+1}=\hat{x}_{i}+[\hat{x}_{i}-\hat{u}_i]\Delta+ \hat{u}_i\Delta Y_{i}^{\ell}  ,\ i=0,1,...,N-1,\\[-0.2em]
&\hat{x}_0=1.
\end{aligned}
\right.
\end{equation}
Similarly to Example \ref{ex SOCP}, we  denote $\hat\Psi_{i}=1+\Delta$, $\hat\Lambda_{i}=-\Delta+\Delta Y_i^{\ell}$, $i=0,1,...,N-1$ and define
$$ \hat Z_{i}=\Big[\prod_{j=i+1}^{N-1}\hat{\Psi}_{j}\Big]\hat{\Lambda}_{i},\ i=0,1,..., N-1, \ \hat\zeta=\sum_{i=0}^{N-1}\hat Z_{i}.$$
Then, $\hat \eta=\prod_{i=0}^{N-1}\hat{\Psi}_{i}+ \hat \zeta$ is the final value of the solution to \eqref{eq dis_ex_randwalk} with control $\hat {\mathscr{U}}^{N}\equiv(1,1,...,1)$.

Define the random walk approximation of cost function \eqref{eq dis_cost} by
\begin{equation}\label{eq disc cot randwalk}
\hat J^N(\hat {\mathscr{U}}^{N})
\!=\!\frac{1}{2}\dbE\big|\hat x_{N}- \hat\eta \big|^2=\frac{1}{2}\dbE\big|\sum_{i=0}^{N-1}\hat Z_{i}\hat{u}_{i}- \hat\zeta \big|^2.
\end{equation}
The discretized approximation stochastic optimal control problem of random walks is: To find $ (\hat {\mathscr{U}}^{N})^*\in \mathcal{C}\cap \mathcal{N}$ such that
\begin{equation}\label{disc SOCP randomwalk}
\hat J^N((\hat {\mathscr{U}}^{N})^*)=\min_{\hat {\mathscr{U}}^{N}\in \mathcal{C}\cap \mathcal{N}}\hat J^N(\hat {\mathscr{U}}^{N}).
\end{equation}
Clearly, $(\hat {\mathscr{U}}^{N})^*\equiv(1,1,...,1)$ is the optimal solution to \eqref{disc SOCP randomwalk}.

Letting
\begin{equation}\label{eq dis_Matrix}
M=(\hat Z_0,\hat Z_1,..., \hat Z_{N-1})^{\top}(\hat Z_0,\hat Z_1,..., \hat Z_{N-1}),\ b=\hat\zeta(\hat Z_0,\hat Z_1,..., \hat Z_{N-1})^{\top},
\end{equation}
we have
$$D \hat J^N(\hat {\mathscr{U}}^{N})=M\hat{\mathscr{U}}^{N}-b, \quad\ \forall\ \hat {\mathscr{U}}^{N}\in \mathcal{L}^2(\hat\Omega,\hat{\mathscr{F}}, R^{N}) .$$
To solve \eqref{disc SOCP randomwalk} we only need to solve the multistage stochastic variational inequality
\begin{equation}\label{SVI_SOCP EX}
D \hat J^N((\hat {\mathscr{U}}^{N})^*)\in N_{\mathcal{C}\cap\mathcal{N}}((\hat {\mathscr{U}}^{N})^*).
\end{equation}

Note that the sample space $\hat \O$ defined by \eqref{sample space ex2} has $2^{N\ell}$ sample points. In order to obtain a relatively high approximation accuracy for the original discrete-time optimal control problem (or related continuous-time optimal control problem), the number of the independent identically distributed random variables  $\ell$ for approximating the increment of Wiener process (and the number of the partitions $N$) should be large enough. Then, $\hat \O$  will contain  an extremely large number of sample points and there is no hope to obtain the exact solution  $\tilde y$ of the projection onto the  nonanticipativity subspace $\mathcal{N}$ (in Step 1 of Algorithm 3.1). The Monte Carlo method is used to calculate  $\tilde y$. Consequently, the calculation for $\tilde y$ will be time-consuming and it is difficult to reduce the calculation error. Let $\kappa(\in \mathbb{N})$ be the number of  sample size. The numerical results of \eqref{SVI_SOCP EX} is reported in Table 2. The parameters are determined in the same way as that in Example \ref{example1}.
\end{example}

\begin{table}[h]
\begin{center}
\begin{minipage}{\textwidth}
\caption{Numerical results for Example \ref{example2}}\label{tab test 2}
\begin{tabular*}{\textwidth}{@{\extracolsep{\fill}}lcccccc@{\extracolsep{\fill}}}
\toprule
$Err=10^{-3}$
&\multicolumn{2}{@{}c@{}}{$N=10,\ell=50,  \kappa=1000$}
&\multicolumn{2}{@{}c@{}}{$N=20, \ell=100, \kappa=2000$} \\
\cmidrule{2-3}\cmidrule{4-5}%
 & Avg-time(s) & Avg-iter  &    Avg-time(s) & Avg-iter  \\
\midrule
PHA  & 512.1321  & 12876 & 2467.1321 & 23124\\
Alg. 3.1 & 301.6875  & 20127  & 1190.5237  & 34566 \\
\toprule
\end{tabular*}
\end{minipage}
\end{center}
\end{table}

\section{Concluding remark}

This paper is devoted to establishing an explicit type splitting algorithm for multistage stochastic variational inequalities based on the prediction-correction ADMM for deterministic variational inequalities with separable structures. As we have seen, the main difference between the deterministic variational inequality and the  multistage stochastic variational inequality is that, in the stochastic case, there exists an extra nonanticipativity constraint which leads to some new difficulty in proposing a proper algorithm for multistage stochastic variational inequalities. The key idea of both PHA and Algorithm 3.1 is to treat the projections onto the nonempty closed convex set and the nonanticipativity subspace separately by proper splitting method. The main advantage of Algorithm 3.1 is that, it is an explicit iterative algorithm so that the calculation in each step of the algorithm becomes much easier.

In order to simplify the discussion and make the main idea much clear,  we simplified the original prediction-correction ADMM (for deterministic variational inequalities) in \cite{HeJCM2006}. It should be remarked that, the algorithm proposed in this paper can be further developed and generalized. For instance, the prediction-correction ADMM with variable parameters $\beta$ and/or $\alpha$, some accelerated algorithms based on the prediction-correction ADMM. Furthermore, by Lemma \ref{lemma 3.1}, under proper conditions, solving  original multistage stochastic variational inequality $\mbox{MSVI}(F,\mathcal{C}\cap\mathcal{N} )$ is equivalent to solving the variational inequality \vi{T}{\mathcal{K}}. That give us an opportunity to solve multistage stochastic variational inequalities with some other algorithms for deterministic variational inequalities in infinite dimensional spaces. Some new algorithms  for multistage stochastic variational inequalities in the general probability space might be proposed in that way. Further more,  some    algorithms with strong convergence for multistage stochastic variational inequalities in the general probability space are also valuable for further research.

The multistage stochastic variational inequality in this paper is defined on $\mathcal{L}^2(\Omega,\mathscr{F}, R^n)$. One of the main motivation to define  the multistage stochastic variational inequality on that space is for its applications in stochastic optimal control problems, in which the admissible controls are usually chosen to be the square-integrable stochastic processes (see Example \ref{ex SOCP}). In addition, since   $\mathcal{L}^2(\Omega,\mathscr{F}, R^n)$ is a Hilbert space, some technical difficulties in the algorithm design and the convergence analysis are avoided when we consider the  multistage stochastic variational inequality on that space. It should be remarked that, in some original research articles, the two-stage stochastic programming problems are considered  on $\mathcal{L}^\infty(\Omega,\mathscr{F}, R^n)$, see for instance \cite{Rockafellar wets 1976} or the book \cite{Shapiro handbook SP 2009}. Therefore, it is interesting to investigate the multistage stochastic variational inequality on $\mathcal{L}^\infty(\Omega,\mathscr{F}, R^n)$. Also, we can study the  multistage stochastic variational inequality on $\mathcal{L}^p(\Omega,\mathscr{F}, R^n)$ for any $p\in[1,\infty]$. When $p\neq 2$, the space $\mathcal{L}^p(\Omega,\mathscr{F}, R^n)$ is not a Hilbert space (even not a reflexive Banach space if $p=1$ or $p=\infty$), some new phenomena and new difficulties might appear. It shall be investigated elsewhere.

\vspace{+2em}

\textbf{Acknowledgements.} Both authors would like to thank the referees and the Associate Editor for their critical comments and helpful suggestions.

\vspace{+1em}

\appendix

\section{Proof of Lemma \ref{appendix6}}

In the appendix, we shall  give the proof of Lemma \ref{appendix6}.  The main idea of the proof comes from \cite[Lemma 4.6]{wangzhang17}.

\begin{proof}
It is clear that  \eqref{VI pointwise} implies \eqref{VI integral}. Then we only need to prove that the converse is also true.

Define
\begin{align*}
 A=\{\omega\in\Omega\mid\exists~x\in C(\omega)~s.t.~\langle F(x^*)(\omega), x-x^*(\omega)\rangle<0\}.
\end{align*}
To prove that ~\eqref{VI pointwise} holds, we only need to prove that ~$P(A)=0$.
Let
$$
 G=\{(\omega,x)\in\Omega\times R^n \mid x\in C(\omega),~\langle F(x^*)(\omega), x-x^*(\omega)\rangle<0\}.
$$
Then $G$ is $\mathscr{F}\otimes\mathscr{B}(R^n)$-measurable.By \cite[Theorem III.23]{Castaing1977}, $A$ is $\mathscr{F}$-measurable.

Take $k,r=1,2,\cdots$, define
$$
 A_{k,r}=\{\o\in\O \mid \exists~x\in C(\omega)\cap\bar{B}(0,r) ~s.t.~\langle F(x^*)(\omega), x-x^*(\omega)\rangle\le-\frac{1}{k} \}
$$
and
$$
  \Phi_{k,r}(\omega)=\{x\in C(\omega)\cap\bar{B}(0,r)\mid \langle F(x^*)(\omega),x-x^*(\omega)\rangle\le-\frac{1}{k}\}.
$$
Here $\bar{B}(0,r)$ is the closed ball in $R^n$ of center $0$ and radius $r$.
Similarly,  $A_{k,r}$ is $\mathscr{F}$-measurable. In addition, $\Phi_{k,r}$ is an $\mathscr{F}$-measurable set valued map  and
$$
A=\bigcup_{r=1}^{\infty}\bigcup_{k=1}^{\infty}A_{k,r}.
$$
To prove $P(A)=0$, we only need to prove that for any $k,r$, $P(A_{k,r})=0$. Assume that there exist $k,r$ such that $P(A_{k,r})>0$.
By Lemma \ref{measurable-selec}, there exists $\eta\in\mathcal{L}^2(\Omega,\mathscr{F},P)$ such that
\begin{align*}
\eta(\omega)\in\Phi_{k,r}(\omega),\quad \mbox{a.s.}~\omega\in A_{k,r}.
\end{align*}
Define $\tilde{\eta}=\eta\chi_{A_{k,r}}+x^* \chi_{\O\setminus A_{k,r}}$, then $\tilde{\eta}\in\mathcal{C}$, and
\begin{align*}
\langle F(x^*),\tilde{\eta}-x^*\rangle_{\mathcal{L}^2}
=&\int_{\Omega}\langle F(x^*)(\omega),\tilde{\eta}(\omega)-x^*(\omega)\rangle P(d(\omega))  \nonumber\\
=&\int_{A_{k,r}}\langle F(x^*)(\omega),\eta(\omega)-x^*(\omega)\rangle P(d(\omega))\nonumber\\
\le&-\frac{1}{k}P(A_{k,r})\nonumber\\
<&0,
 \end{align*}
contradicting  \eqref{VI integral}. Thus $P(A)=0$.
\end{proof}

{\small

}

\end{spacing}

\end{document}